\title{The Torelli group and congruence subgroups of the mapping class group}
\author{Andrew Putman\footnote{Supported in part by NSF grant DMS-1005318}}
\theoremstyle{plain}
\newtheorem{theorem}{Theorem}[section]
\newtheorem{lemma}[theorem]{Lemma}
\theoremstyle{definition}
\newtheorem*{definition}{Definition}
\theoremstyle{remark}
\newtheorem{exercise}[theorem]{Exercise}
\newtheorem*{remark}{Remark}
\newtheorem*{remarks}{Remarks}
\newtheorem*{example}{Example}
\DeclareMathOperator{\Hom}{Hom}
\DeclareMathOperator{\Ker}{ker}
\DeclareMathOperator{\Image}{Im}
\DeclareMathOperator{\Mod}{Mod}
\newcommand\Torelli{\ensuremath{{\mathcal I}}}
\DeclareMathOperator{\Sp}{Sp}
\DeclareMathOperator{\SL}{SL}
\DeclareMathOperator{\U}{U}
\newcommand\CNosep{\ensuremath{\mathcal{C}^{\text{ns}}}}
\newcommand\Torus{\ensuremath{\mathbb{T}}}
\newcommand\Z{\ensuremath{\mathbb{Z}}}
\newcommand\Q{\ensuremath{\mathbb{Q}}}
\DeclareMathOperator{\HH}{H}
\DeclareMathOperator{\HHH}{\mathcal{H}}
\newcommand\Set[2]{\ensuremath{\{\text{#1 $|$ #2}\}}}
\newcommand\GroupPres[2]{\ensuremath{\langle #1 \text{ $|$ } #2 \rangle}}
\newcommand\Figure[3]{
\begin{figure}[t]
\centering
\centerline{\psfig{file=#2,scale=60}}
\caption{#3}
\label{#1}
\end{figure}}
\newcommand\SpLie{\ensuremath{\mathfrak{sp}}}
\newcommand\SLLie{\ensuremath{\mathfrak{sl}}}
\newcommand\One{\ensuremath{\mathbb{I}}}
\newcommand\Zero{\ensuremath{\mathbb{O}}}
\newcommand\MatTwoTwo[4]{\ensuremath{\left( \begin{smallmatrix} #1 & #2\\ #3 & #4 \end{smallmatrix} \right)}}
\newcommand\ParagraphHeading[1]{\medskip \noindent {\bf #1}}
\begin{document}

\maketitle

\section*{Introduction}

Let $\Sigma_{g,n}$ be a compact oriented genus $g$ surface with $n$ boundary components.
The {\em mapping class group} of $\Sigma_{g,n}$, denoted $\Mod_{g,n}$, is the group
of orientation-preserving diffeomorphisms of $\Sigma_{g,n}$ that restrict to the identity on
$\partial \Sigma_{g,n}$, modulo isotopies that fix $\partial \Sigma_{g,n}$.
The group $\Mod_{g,n}$ plays a fundamental role in many areas of mathematics, ranging
from low-dimensional topology to algebraic geometry.  At least for degrees less than about
$2g/3$, the cohomology of $\Mod_{g,n}$ is well-understood due to the resolution of the
Mumford conjecture by Madsen and Weiss \cite{MadsenWeiss} (together with Harer's unpublished
improved version of his homological stability theorem; see \cite{Boldsen} for an exposition
of this).

However, the cohomology of finite-index subgroups of $\Mod_{g,n}$ remains a mystery.  In
these notes, we will focus on one low-degree calculation.  Consider $n \in \{0,1\}$.
For an integer $p$, the {\em level $p$ congruence subgroup} of $\Mod_{g,n}$, denoted
$\Mod_{g,n}(p)$, is the subgroup of $\Mod_{g,n}$ consisting of mapping classes that
act trivially on $\HH_1(\Sigma_{g,n};\Z/p)$.  Another description of $\Mod_{g,n}(p)$ is
as follows.  The action of $\Mod_{g,n}$ on $\HH_1(\Sigma_{g,n};\Z)$ preserves the algebraic
intersection pairing.  Since $n \leq 1$, this is a nondegenerate alternating form, so we
obtain a representation $\Mod_{g,n} \rightarrow \Sp_{2g}(\Z)$.  Classically this representation
was known to be surjective (see \S \ref{appendix:symplectic}).  Let $\Sp_{2g}(\Z,p)$ be the subgroup of $\Sp_{2g}(\Z)$ consisting
of matrices which equal the identity modulo $p$.  Then $\Mod_{g,n}(p)$ is the pullback
of $\Sp_{2g}(\Z,p)$ to $\Mod_{g,n}$.

These notes will discuss the calculation of $\HH^2(\Mod_{g,n}(p);\Z)$.  One motivation
for this is the study of line bundles on the finite cover of the moduli space of curves
associated to $\Mod_{g,n}(p)$, which is known as the moduli space of curves with level $p$ structures.
The first Chern class of such a line bundle lies in $\HH^2(\Mod_{g,n}(p);\Z)$, and
the determination of $\HH^2(\Mod_{g,n}(p);\Z)$ is the heart of the paper \cite{PutmanPicardGroupLevel},
which gives a complete classification of such line bundles.  However, in these notes
we will ignore this connection to algebraic geometry.  Instead, we will use the
computation of this cohomology group as an excuse to discuss a number of interesting
topics related to the mapping class group.

The universal coefficients exact sequence for $\HH^2(\Mod_{g,n}(p);\Z)$ takes the form
\begin{align*}
0 \longrightarrow \text{Ext}(\HH_1(\Mod_{g,n}(p);\Z),\Z) &\longrightarrow \HH^2(\Mod_{g,n}(p);\Z) \\
&\ \ \ \ \ \ \ \ \longrightarrow \Hom(\HH_2(\Mod_{g,n}(p);\Z),\Z) \longrightarrow 0.
\end{align*}
The third and fourth lecture will be devoted to calculating the kernel and cokernel of this
exact sequence.  They will be proceeded by two lectures on necessary background.  Let us
now give a more detailed description of the four lectures.
\begin{itemize}
\item Lecture 1 will be devoted to the {\em Torelli group}.  Denoted $\Torelli_{g,n}$, this
is the subgroup of $\Mod_{g,n}$ consisting of mapping classes that act trivially on $\HH_1(\Sigma_{g,n};\Z)$.
There are short exact sequences
$$1 \longrightarrow \Torelli_{g,n} \longrightarrow \Mod_{g,n} \longrightarrow \Sp_{2g}(\Z) \longrightarrow 1$$
and
$$1 \longrightarrow \Torelli_{g,n} \longrightarrow \Mod_{g,n}(p) \longrightarrow \Sp_{2g}(\Z,p) \longrightarrow 1,$$
and the structure of $\Mod_{g,n}(p)$ is a sort of mixture of the structure of $\Torelli_{g,n}$ and
$\Sp_{2g}(\Z,p)$.
\item Lecture 2 will be devoted to the {\em Johnson homomorphism}.  Set $\HHH = \HH_1(\Sigma_{g,n};\Z)$.
The Johnson homomorphism is a surjective homomorphism $\tau : \Torelli_{g,1} \longrightarrow \wedge^3 \HHH$.
A deep theorem of Johnson shows that the Johnson homomorphism gives the ``rational part''
of the abelianization of $\Torelli_{g,1}$.  More precisely, 
$\HH_1(\Torelli_{g,1};\Z) \cong W \oplus \wedge^3 \HHH$, where $W$ consists of torsion (in fact, $2$-torsion).
We will also construct a ``mod $p$'' version of the Johnson homomorphism which takes the form
$\tau_p : \Mod_{g,1}(p) \longrightarrow \HHH_p$, where $\HHH_p = \HH_1(\Sigma_{g,n};\Z/p)$.
\item Lecture 3 is devoted to calculating $\HH_1(\Mod_{g,1}(p);\Z)$ for odd $p$.  See
the beginning of that lecture for why we restrict to odd $p$ and do not consider the closed case.  There
are two basic pieces.  The first comes from the mod $p$ Johnson homomorphism and the second comes
from the abelianization of $\Sp_{2g}(\Z,p)$.
\item Lecture 4 is devoted to proving that $\HH_2(\Mod_{g}(p);\Q) \cong \Q$.  Of course, this
implies that
$$\Hom(\HH_2(\Mod_{g,n}(p);\Z),\Z) \cong \Z.$$
The major work here is related to homological stability.
\end{itemize}

\section*{Lecture 1 : The Torelli group}
\addtocounter{section}{1}
\setcounter{theorem}{0}
\setcounter{figure}{0}

The Torelli group was first considered by Nielsen and Magnus in the early 20th century.  However, 
its study only really took off in the late
'70's and early '80's thanks to work of several people, most especially Birman and Johnson.
Johnson's work has proven particularly fundamental and influential, and his survey \cite{JohnsonSurvey}
cannot be recommended enough.

Throughout this lecture, we will fix some $n \in \{0,1\}$.

\ParagraphHeading{The symplectic representation.}
Recall that $\Torelli_{g,n}$ is the kernel of the representation $\Mod_{g,n} \rightarrow \Sp_{2g}(\Z)$
arising from the action of $\Mod_{g,n}$ on $\HH_1(\Sigma_{g,n};\Z)$.  We will need the following fact
about this action.  If $x$ is a simple closed curve on $\Sigma_{g,n}$, then let $T_x$ denote the
right Dehn twist about $x$.  Also, let $i_a(\cdot,\cdot)$ denote the algebraic intersection pairing
on $\HH_1(\Sigma_{g,n};\Z)$.

\begin{lemma}
\label{lemma:transvection}
Let $x$ be a simple closed curve on $x$.  Orient $x$ in an arbitrary way, and let
$[x] \in \HH_1(\Sigma_{g,1};\Z)$ denote the associated homology class.  Then for
$v \in \HH_1(\Sigma_{g,1};\Z)$, we have
$$T_x(v) = v + i_a([x],v) \cdot [x].$$
\end{lemma}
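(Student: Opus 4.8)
The plan is to verify the transvection formula directly from the definition of the Dehn twist, reducing to the local picture near the curve $x$ and then to a computation in the first homology of an annulus (or torus-with-boundary) neighborhood. The key observation is that both sides of the claimed identity are well-defined on homology classes, so it suffices to check the formula on a convenient collection of classes that generate $\HH_1(\Sigma_{g,n};\Z)$.

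First I would reduce to the case where $v$ is represented by a simple closed curve $y$ meeting $x$ transversely, since such classes generate $\HH_1(\Sigma_{g,n};\Z)$ and both sides are additive in $v$. Next, I would analyze the effect of $T_x$ on $y$ locally: the twist $T_x$ replaces $y$ by a curve $T_x(y)$ that agrees with $y$ outside an annular neighborhood $A$ of $x$, and inside each rectangle of $y \cap A$ inserts a copy of the core curve $x$ with a sign determined by the local orientation of the intersection. This is the standard ``picture'' of a Dehn twist. Thus at the level of homology, $[T_x(y)] = [y] + c \cdot [x]$ for some integer $c$, and the content is to identify $c$ with $i_a([x],v)$.

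To pin down $c$, I would count intersections with sign. Each point of $y \cap x$ contributes $\pm 1$ to $i_a([x],[y])$ according to whether the orientations of $x$ and $y$ at that point agree with the orientation of the surface, and each such point contributes exactly one ``full twist'' of $y$ around $x$ (hence one copy of $[x]$), with the sign of that copy matching the sign of the intersection (this is where the convention that $T_x$ is the \emph{right} twist enters). Summing over all intersection points gives $c = \sum_{\text{pts}} \pm 1 = i_a([x],[y])$, which is exactly the claimed coefficient. One should double-check that homotoping $y$ to reduce the geometric intersection number with $x$ does not change this count — but this is automatic, since canceling a bigon removes two intersection points of opposite sign, leaving $i_a([x],[y])$ and the homology class $[T_x(y)]$ both unchanged.

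The main obstacle is purely one of bookkeeping conventions: getting the sign of the inserted copy of $[x]$ to match the sign convention in $i_a$ and the handedness (left versus right) of the Dehn twist. I would handle this by computing a single explicit model case — $x$ and $y$ the standard generators $a_1, b_1$ of a genus-one subsurface, where $i_a([a_1],[b_1]) = 1$ (with the chosen orientation convention) — drawing the right Dehn twist $T_{a_1}(b_1)$ and reading off $[T_{a_1}(b_1)] = [b_1] + [a_1]$ directly from the picture. Once the formula is confirmed in this base case with the correct signs, the general case follows by the additivity and locality arguments above.
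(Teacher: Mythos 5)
Your argument is correct, and it is exactly the standard proof the paper has in mind: the lemma is stated there without proof (it is explicitly left as an exercise), and the intended solution is precisely your reduction to classes of simple closed curves transverse to $x$, the local count of inserted copies of $[x]$ with signs matching those in $i_a$, and a single model computation (e.g.\ $T_{a_1}(b_1)=b_1+a_1$ on a genus-one piece) to pin down the sign convention for the right twist. No gaps; your bigon remark correctly disposes of the independence from the chosen transverse representative.
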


\begin{remark}
The Dehn twist $T_x$ does not depend on an orientation on $x$.  As a sanity check, you should
verify that despite its appearance, the formula in Lemma \ref{lemma:transvection} does not
depend on the orientation of $x$.
\end{remark}

\Figure{figure:generators}{Generators}{A separating twist $T_x$ and a
bounding pair map $T_y T_{y'}^{-1}$}

\begin{exercise}
Prove Lemma \ref{lemma:transvection}.
\end{exercise}

\ParagraphHeading{Basic elements of Torelli.}
Lemma \ref{lemma:transvection} allows us to construct some important elements
of $\Torelli_{g,1}$.  First, if $[x]=0$, then $T_x$ acts trivially on $\HH_1(\Sigma_{g,1};\Z)$.  This
will happen exactly when $x$ bounds an embedded subsurface of $x$ (see Figure \ref{figure:generators}).
We will call such elements
of Torelli {\em separating twists}.  Next, the formula Lemma \ref{lemma:transvection} only depends
on the homology class of the simple closed curve.  Thus if $y$ and $y'$ are homologous, then $T_y$ and
$T_{y'}$ act
the same on $\HH_1(\Sigma_{g,1};\Z)$, so $T_y T_{y'}^{-1} \in \Torelli_{g,1}$.  If $y$ and $y'$
are disjoint and homologous, then their union bounds an
embedded subsurface (see Figure \ref{figure:generators}).  If in addition to being disjoint neither $y$ nor
$y'$ is separating, then we will call $T_y T_{y'}^{-1} \in \Torelli_{g,1}$ a {\em bounding pair map}.

\ParagraphHeading{Generating sets for Torelli.}
We have the following theorem.

\begin{theorem}
\label{theorem:torelligen}
For all $g \geq 1$, the group $\Torelli_{g,1}$ is generated by bounding pair maps and separating twists.
For $g \geq 3$, only bounding pair maps are needed.
\end{theorem}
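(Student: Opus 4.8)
The plan is to induct on the genus $g$, using the Birman exact sequence to relate $\Torelli_{g,1}$ to $\Torelli_{g-1,1}$ and to reduce the generation problem to a statement about a suitable complex of curves on which $\Torelli_{g,1}$ acts. First I would set up the base case $g=1$: here $\HHH$ has rank $2$, the image of $\Mod_{1,1}$ in $\Sp_2(\Z)=\SL_2(\Z)$ is everything, and a standard argument (using that $\Mod_{1,1}$ is free on two Dehn twists) shows $\Torelli_{1,1}$ is infinite cyclic generated by the twist about the boundary, which is a separating twist; for $g=2$ a similar hands-on analysis works. These small cases have no bounding pair maps available in the second sentence's sense, which is why that sentence requires $g\geq 3$.

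For the inductive step, the key tool is an action of $\Torelli_{g,1}$ on a simplicial complex whose vertices are isotopy classes of (appropriately decorated) simple closed curves or arcs, chosen so that (i) the complex is highly connected — in particular connected and simply connected in the relevant range — and (ii) the stabilizer of a vertex, intersected with $\Torelli_{g,1}$, is built from a Torelli group of smaller genus together with a Birman-kernel piece. Concretely, one can cut along a nonseparating simple closed curve $y$; the relevant stabilizer surjects onto $\Torelli_{g-1}$-type data with kernel generated by point-pushing maps, and these point-pushing maps are themselves products of bounding pair maps and separating twists (a point-push around a loop is $T_aT_b^{-1}$ for the two boundary curves of an annular neighborhood of that loop). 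By the standard presentation-theoretic argument (e.g.\ the method of Armstrong, or the ``surjectivity from connectivity'' lemma), generation of the vertex stabilizers plus connectivity of the complex plus the fact that any two vertices differ by an element we can realize via edges whose edge-stabilizers are controlled yields generation of all of $\Torelli_{g,1}$ by separating twists and bounding pair maps.

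Then I would run a second, finer reduction to eliminate separating twists when $g\geq 3$: a separating twist $T_x$ where $x$ splits off a subsurface of genus $h\geq 1$ can be rewritten, using the lantern relation (or equivalently a chain relation), as a product of bounding pair maps — this is Johnson's observation and requires enough genus on both sides of $x$, which is exactly where $g\geq 3$ enters. One writes the genus-$h$ piece's boundary twist in terms of twists along a chain of nonseparating curves, pairs those up into homologous disjoint pairs, and checks each factor is a bounding pair map in $\Torelli_{g,1}$. Combined with the previous paragraph, this gives the second assertion.

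The main obstacle I expect is establishing the connectivity of the curve/arc complex used in the inductive step with sharp enough bounds, and — more delicately — verifying that the $\Torelli$-stabilizer of a vertex is exactly an extension of a smaller Torelli group by a point-pushing subgroup with no extra generators sneaking in. This stabilizer analysis is where the argument is genuinely technical: one must control the Birman exact sequence at the level of Torelli groups (not just mapping class groups) and confirm that the connecting maps send the generating loops to the point-push elements identified above. Once that bookkeeping is done, the high-connectivity input plus induction closes the argument cleanly.
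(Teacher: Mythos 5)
Your outline follows the modern curve-complex route that the paper cites (Putman's ``Cutting and pasting'' proof; the paper itself gives no proof of this theorem, only references), but as written it has genuine gaps at exactly the load-bearing points. First, the base cases: $\Mod_{1,1}$ is not free on two Dehn twists (it is isomorphic to the $3$-strand braid group), although the conclusion $\Torelli_{1,1}=\langle T_{\partial}\rangle$ does hold; more seriously, ``a similar hands-on analysis'' cannot dispose of $g=2$, since $\Torelli_{2,1}$ is not finitely generated (McCullough--Miller, quoted in these notes), and historically the genus-$2$ case is a substantial part of Birman--Powell's and Putman's work. Second, the inductive step cannot be closed by ``the method of Armstrong'' as stated: $\Torelli_{g,1}$ does \emph{not} act transitively on the vertices of the (nonseparating) curve complex -- its orbits are separated by homology classes -- so ``any two vertices differ by an element we can realize via edges'' is false for the group you are trying to generate. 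One must either use the full $\Mod_{g,1}$-action and control the quotient complex, or decorate vertices by homology data; this, together with the fact that cutting $\Sigma_{g,1}$ along a nonseparating curve produces a surface with several boundary components (so the inductive hypothesis about $\Torelli_{g-1,1}$ does not directly apply, and one must first decide what ``Torelli group'' even means for such surfaces), is precisely the technical heart of the cited proof, not a bookkeeping afterthought.

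Third, the reduction to bounding pair maps for $g\geq 3$ is misstated. The lantern argument needs a lantern embedded on \emph{one} side of the separating curve $x$ with the other six curves nonseparating, which requires that some side of $x$ have genus at least $2$ (automatic when $g\geq 3$, and exactly what fails for the genus-$1{+}1$ splittings in genus $2$); it does not require genus on both sides, and conversely a separating curve with a genus-$1$ side in genus $3$ must still be handled. Also, the recipe ``write the boundary twist via a chain of nonseparating curves and pair those up into homologous disjoint pairs'' does not parse: consecutive curves in a chain intersect and are pairwise non-homologous, so they cannot be grouped into bounding pairs. The correct move is the lantern relation with one boundary component equal to $x$, pairing each interior curve of the lantern with the boundary curve it is homologous to, as in the ``killing off separating twists'' discussion in Lecture 1. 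With these three points repaired -- a genuine treatment of genus $2$, a correct generation-from-stabilizers argument accounting for Torelli orbits and multi-boundary cut surfaces, and the corrected lantern step -- your plan becomes essentially the cited proof.
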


\pagebreak
\begin{remarks}
\mbox{}
\begin{enumerate}
\item The fact that $\Torelli_{g,1}$ is generated by bounding pair maps and separating twists was originally
proven by Powell \cite{Powell}, using earlier work of Birman \cite{BirmanSiegel}.  This proof depended on
some heroic calculations in the symplectic group whose details were omitted from the published
papers.  More recently, Putman \cite{PutmanCutPaste} gave a modern proof using the curve complex.  Even
more recently, Hatcher and Margalit \cite{HatcherMargalit} have given an even shorter proof.
\item The fact that for $g \geq 3$ only bounding pair maps are needed is due to Johnson \cite{JohnsonBP}.  He
later proved a remarkable theorem which says that $\Torelli_{g,1}$ is finitely generated for $g \geq 3$
(see \cite{JohnsonFinite}).
The size of Johnson's generating set grows exponentially in $g$.
Answering a conjecture of Johnson, Putman \cite{PutmanSmallGensetTorelli} has recently constructed
a generating set for $\Torelli_{g,1}$ that grows cubically in $g$.  As we will discuss below, the
abelianization of $\Torelli_{g,1}$ has rank cubic in the genus, so one cannot do better.
\item McCullough and Miller \cite{McCulloughMiller} proved that $\Torelli_{2,n}$ is not
finitely generated for $n \in \{0,1\}$.  Later, in his thesis Mess \cite{MessThesis} proved
that $\Torelli_{2}$ is an infinite rank free group.
\item It is not known if $\Torelli_{g,n}$ is finitely presentable for $g \geq 3$.
\end{enumerate}
\end{remarks}

\ParagraphHeading{The Birman exact sequence.}
We will need to make several calculations in $\Torelli_{g,n}$.  For us, the key tool
for making such calculations is the fact that $\Torelli_{g,n}$ contains a large number
of groups derived from surface groups.  This follows from the {\em Birman exact
sequence}, which takes the following form.  Let $U\Sigma_{g}$ be the unit tangent bundle
of $\Sigma_g$.  For $g \geq 2$, we then have an exact sequence
$$1 \longrightarrow \pi_1(U\Sigma_g) \longrightarrow \Mod_{g,1} \longrightarrow \Mod_g \longrightarrow 1.$$
The terms here have the following meanings.  Let $\beta$ be the boundary component of $\Sigma_{g,1}$.
\begin{itemize}
\item The map $\Mod_{g,1} \rightarrow \Mod_g$ comes from gluing a disc to $\beta$ and
extending mapping classes over this disc by the identity.
\item The subgroup $\pi_1(U\Sigma_g)$ of $\Mod_{g,1}$ is known as the ``disc-pushing subgroup''.  The
mapping class associated to $\gamma \in \pi_1(U\Sigma_g)$ ``pushes'' the boundary component $\beta$ around
the surface while allowing it to rotate. 
\end{itemize}
Of course, the original version of the Birman exact sequence goes back to work of Birman \cite{BirmanBraids}.
The version here first appeared in \cite{JohnsonFinite}; see \cite{FarbMargalitPrimer} for a textbook treatment.

\Figure{figure:birmanexactseq}{BirmanExactSeq}{Pushing the boundary component around a simple closed
curve induces a bounding pair map}

\ParagraphHeading{The loop around the fiber.}
The fiber $F_0$ of $\U\Sigma_g$ over the basepoint satisfies $F_0 \cong S^1$.  
The orientation on $\Sigma_g$ determines an orientation on $F_0$, so it makes
sense to talk about ``clockwise'' and ``counterclockwise'' directions on $F_0$.  
The group $\pi_1(U\Sigma_g)$ contains a distinguished element $\delta_0$ which goes once around
$F_0$ in the clockwise direction.  The element of the disc-pushing subgroup of $\Mod_{g,1}$
corresponding to $\delta_0$ rotates the boundary component $\beta$ by a full turn in the clockwise
direction.  Clearly this is simply $T_{\beta}$.  Observe that $T_{\beta} \in \Torelli_{g,1}$.

\ParagraphHeading{Calculating in the disc-pushing subgroup.}
Consider some $\gamma \in \pi_1(\Sigma_g)$ that can be realized by a smoothly embedded simple closed curve.
The derivative of a smooth 
simple representative of $\gamma$ is a path in the tangent bundle of $\gamma$ which does not contain any
zero vectors.  For some fixed Riemannian metric on the surface, we can reparametrize $\gamma$
so that its derivative is a loop $\tilde{\gamma}$ in the unit tangent bundle.  If $\gamma \neq 1$,
then the element $\tilde{\gamma} \in \pi_1(U\Sigma_g)$ is independent of the choice
of a smoothly embedded representative of $\gamma$.  Indeed, any two 
such realizations are smoothly homotopic (this can be proved using the techniques in
\cite{Epstein}; to test your understanding, you should verify that
this fails if $\gamma = 1$).  

Let $\tau_{\gamma} \in \Mod_{g,1}$ be the element of the disc-pushing subgroup corresponding
to $\tilde{\gamma} \in \pi_1(U\Sigma_g)$.
As is shown in Figure \ref{figure:birmanexactseq}, the mapping class $\tau_{\gamma}$
is a bounding pair map, and hence lies in $\Torelli_{g,1}$.  Since the loop $\delta_0$ around
the fiber also corresponds to an element of $\Torelli_{g,1}$, we deduce that the disc-pushing
subgroup lies in $\Torelli_{g,1}$.  This implies that 
relations in $\pi_1(U\Sigma_{g})$ yield relations in $\Torelli_{g,1}$.  Even
more relations can be obtained by embedding $\Torelli_{g,1}$ into $\Torelli_{g',n}$ via a subsurface
inclusion $\Sigma_{g,1} \hookrightarrow \Sigma_{g',n}$.

\Figure{figure:lantern}{Lantern}{The lantern relation is 
$\tilde{\gamma}_1 \cdot \tilde{\gamma}_2 \cdot \tilde{\gamma}_3 = \delta_0^k$.  To make the figure more
attractive, the curves $\gamma_i$ have what appears to be a singularity at the basepoint, but in
reality one should imagine them rounded and smooth there.  In terms of Dehn twists, the
lantern relation is $(T_{x_3} T_{y_3}^{-1}) (T_{x_2} T_{y_2}^{-1}) (T_{x_1} T_{y_1}^{-1}) = T_{\beta}$}

As an example, consider the relation $\gamma_1 \cdot \gamma_2 \cdot \gamma_3 = 1$ in $\pi_1(\Sigma_{g,1})$
depicted in Figure \ref{figure:lantern}.  We have
$\tilde{\gamma}_1 \cdot \tilde{\gamma}_2 \cdot \tilde{\gamma}_3 = \delta_0^k$ for some $k \in \Z$.

\begin{exercise}
Prove that $k = 1$.
\end{exercise}

\noindent
The associated relation $\tau_{\gamma_3} \tau_{\gamma_2} \tau_{\gamma_1} = \delta_0$ in $\Torelli_{g,1}$ is 
the {\em lantern relation}
\begin{equation}
\label{eqn:lantern}
(T_{x_3} T_{y_3}^{-1}) (T_{x_2} T_{y_2}^{-1}) (T_{x_1} T_{y_1}^{-1}) = T_{\beta};
\end{equation}
here the curves $x_i$ and $y_i$ are as depicted in Figure \ref{figure:lantern}.  

\begin{remark}
The order of the terms in \eqref{eqn:lantern} is the opposite of what one might expect because elements
in the fundamental group are composed left to right but mapping classes are composed right to left.
\end{remark}

\noindent
Observe that if
$g' \geq 3$, then this relation can be embedded in $\Torelli_{g',n}$ to express a separating
twist as a product of bounding pair maps (c.f.\ Theorem \ref{theorem:torelligen}).

\ParagraphHeading{Killing off separating twists.}
The lantern relation gives numerous ways of expressing $T_{\beta}$ as a product of bounding pair
maps.  For $1 \leq i \leq 3$, let $x_i,y_i \in \pi_1(\Sigma_g)$ be the curves in Figure \ref{figure:killsep}.
Observe that $x_1 x_2 x_3 = 1$ and $y_1 y_2 y_3=1$, so we have two different lantern relations
$$T_{\beta} = \tau_{x_3} \tau_{x_2} \tau_{x_1} \quad \text{and} \quad T_{\beta} = \tau_{y_3} \tau_{y_2} \tau_{y_1}.$$
These curves have the property that $x_i$ is homologous to $y_i^{-1}$ for $1 \leq i \leq 3$.  The group
$\Torelli_{g,1}$ acts on $\pi_1(\Sigma_g)$, and it is not hard to see that
there exists some $f_i \in \Torelli_{g,1}$ such that $f_i(x_i) = y_i^{-1}$.  To use this, we will need the following exercise.

\begin{exercise}
\label{exercise:conjugation}
If $\gamma \in \pi_1(\Sigma_g)$ can be realized by a simple closed curve and if $f \in \Mod_{g,1}$, then
$\tau_{f(\gamma)} = f \tau_{\gamma} f^{-1}$.
\end{exercise}

\noindent
Applying Exercise \ref{exercise:conjugation} several times, we obtain the following relation in $\Torelli_{g,1}$.
\begin{align}
T_{\beta}^2 &= (\tau_{x_3} \tau_{x_2} \tau_{x_1}) (\tau_{y_3} \tau_{y_2} \tau_{y_1})\notag\\
&= (\tau_{x_3} \tau_{x_2} \tau_{x_1}) (\tau_{f_3(x_3)}^{-1} \tau_{f_2(x_2)}^{-1} \tau_{f_1(x_1)}^{-1})\notag\\
&= (\tau_{x_3} \tau_{x_2} \tau_{x_1})(f_3 \tau_{x_3}^{-1} f_3^{-1} f_2 \tau_{x_2}^{-1} f_2^{-1}f_1 \tau_{x_1}^{-1} f_1^{-1})\label{eqn:killsep}
\end{align}
Upon abelianizing $\Torelli_{g,1}$, the right hand side of \eqref{eqn:killsep} vanishes.  Letting 
$[T_{\beta}] \in \HH_1(\Torelli_{g,1};\Z)$ be the associated element of the abelianization, we obtain
that $2 [T_{\beta}] = 0$.

\Figure{figure:killsep}{KillSep}{$x_1 x_2 x_3 = 1$ and $y_1 y_2 y_3=1$}

If $x$ is a separating curve on a surface of genus at least $3$, then we can embed the above relation
into the surface to get that $T_x^2$ has to vanish upon abelianizing the Torelli group.  We have proven
the following.

\begin{lemma}
\label{lemma:septwiststorsion}
Fix $g \geq 3$ and $n \in \{0,1\}$.  Let $T_x$ be a separating twist in $\Torelli_{g,n}$.  Then
the image $[T_x]$ of $T_x$ in $\HH_1(\Torelli_{g,n};\Z)$ satisfies $2 [T_x]=0$.
\end{lemma}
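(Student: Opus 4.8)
The entire argument is already present in the text preceding the statement; the plan is simply to observe that the computation carried out in genus $g$ with one boundary component, producing the relation \eqref{eqn:killsep} and the conclusion $2[T_\beta] = 0$ in $\HH_1(\Torelli_{g,1};\Z)$, can be transplanted into any surface of genus at least $3$ along any separating curve. First I would recall the setup: given a separating simple closed curve $x$ on $\Sigma_{g,n}$ with $g \geq 3$ and $n \in \{0,1\}$, one side of $x$ is a subsurface $\Sigma_{h,1}$ for some $1 \leq h \leq g$. Since $g \geq 3$, by choosing the side appropriately (or enlarging) we may assume $h \geq 3$, so that the configuration of curves $x_i, y_i$ and the elements $f_i \in \Torelli_{h,1}$ from Figure \ref{figure:killsep} all live inside this subsurface, with $x$ itself playing the role of $\beta$ (the boundary of $\Sigma_{h,1}$, whose Dehn twist is $T_x = T_\beta$).

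Next I would invoke the subsurface inclusion $\Sigma_{h,1} \hookrightarrow \Sigma_{g,n}$, which induces a homomorphism $\Torelli_{h,1} \to \Torelli_{g,n}$ (separating twists and bounding pair maps map to such, and more generally any mapping class supported in the subsurface that acts trivially on $\HH_1(\Sigma_{h,1};\Z)$ acts trivially on $\HH_1(\Sigma_{g,n};\Z)$ as well, since the inclusion respects the symplectic structure on the sub-lattice). Applying this homomorphism to the relation \eqref{eqn:killsep}, and then passing to $\HH_1(\Torelli_{g,n};\Z)$, the right-hand side still abelianizes to zero — it is a product of commutator-type terms of the form $(\tau_{x_i})(f_i \tau_{x_i}^{-1} f_i^{-1})$, each of which dies in any abelianization, regardless of the ambient group — and the left-hand side becomes $2[T_x]$. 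Hence $2[T_x] = 0$.

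There is really no serious obstacle here; the only point requiring a moment of care is the genus bookkeeping. One must check that for $g \geq 3$ and an arbitrary separating curve $x$, at least one complementary component has genus $\geq 3$ is \emph{not} quite what is needed — rather, the relation \eqref{eqn:killsep} as drawn in Figure \ref{figure:killsep} requires enough genus to fit the two lantern configurations, and one should confirm that genus $3$ suffices for this (the lantern itself only needs the four-holed sphere, i.e.\ can be embedded once genus $\geq 2$ is available on one side; the doubled version with the curves $y_i$ homologous to $x_i^{-1}$ is what pins down genus $3$). If the separating curve $x$ cuts off a piece of genus $1$ or $2$ on both sides — impossible when $g \geq 3$ since the genera add to $g$ — we would be stuck, but this case does not arise: one side always has genus $\geq \lceil g/2 \rceil \geq 2$, and in fact since $g \geq 3$ we can always find an embedded $\Sigma_{3,1}$ containing $x$ as a separating curve inside it (for instance by taking $x$ together with a genus-$3$ regular neighborhood on the higher-genus side, or by using the full surface when $g = 3$). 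With that embedding in hand the argument of \eqref{eqn:killsep} applies verbatim and yields $2[T_x] = 0$.
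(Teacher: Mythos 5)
Your overall strategy --- transplant the relation \eqref{eqn:killsep} along a subsurface inclusion and note that its right-hand side dies in any abelianization --- is the paper's, but the step you yourself flag as the only delicate one, the genus bookkeeping, is where your write-up breaks down. Any compact subsurface of $\Sigma_{g,n}$ whose boundary is a single circle isotopic to $x$ is necessarily the closure of one of the two complementary components of $x$, so there is no ``enlarging'': you cannot simply ``assume $h \geq 3$.''  For $g=3$ a separating curve can split the surface into pieces of genus $1$ and $2$ (and for $g=4$ into $2+2$), so your parenthetical claim that having genus at most $2$ on both sides is ``impossible when $g \geq 3$'' is false.  Worse, when $n=1$ the curve $x$ may cut off a one-holed torus on the side away from $\partial \Sigma_{g,1}$; then the \emph{only} side of $x$ with a single boundary component has genus $1$, which cannot even contain one lantern configuration, so no embedding $\Sigma_{h,1}\hookrightarrow\Sigma_{g,n}$ with $\beta\mapsto x$ exists for any usable $h$.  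Your fallback --- find an embedded $\Sigma_{3,1}$ containing $x$ as a separating curve and apply \eqref{eqn:killsep} ``verbatim'' --- is circular: \eqref{eqn:killsep} is a relation about the twist on the \emph{boundary} of the surface carrying the disc-pushing maps, so inside such a $\Sigma_{3,1}$ it gives information about the boundary twist of that subsurface, not about $T_x$; to use it for an interior separating curve of $\Sigma_{3,1}$ is to assume the lemma you are proving.

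The missing idea is that neither the two lantern configurations nor the conjugating elements need to be supported in a one-holed subsurface bounded by $x$.  Run the double-lantern argument around $x$ itself: since $g \geq 3$, some complementary side of $x$ has genus at least $2$ (an extra boundary component there is harmless), and in that side one can realize two expressions of $T_x$ as a product of three bounding pair maps whose corresponding pairs are homologous with opposite signs; the elements $f_i$ carrying one triple of bounding pairs to the (inverted) other triple should then be taken in the ambient group $\Torelli_{g,n}$, not in the Torelli group of a subsurface bounded by $x$.  With that adjustment the cancellation in $\HH_1(\Torelli_{g,n};\Z)$ goes through exactly as you describe and yields $2[T_x]=0$; this is the correct reading of the paper's phrase ``embed the above relation into the surface,'' and it is how the detailed treatments cited after the lemma handle an arbitrary separating curve.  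As written, though, your reduction from $T_\beta$ to a general $T_x$ does not work.
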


\noindent
Lemma \ref{lemma:septwiststorsion} first appeared in \cite{JohnsonAbel}.  The above is a version of Johnson's
proof.  For an alternate exposition of that proof which arranges the details a little differently, see
\cite[\S 7.2]{PutmanJohnsonHomo}.

\ParagraphHeading{A preview.}
In Lecture 2, we will construct the important Johnson homomorphism.  Letting
$\HHH = \HH_1(\Sigma_{g,n};\Z)$, this is a surjective homomorphism
$$\tau : \Torelli_{g,1} \longrightarrow \wedge^3 \HHH.$$
There is also a version for closed surfaces, but we will not discuss it.  The key property
of the Johnson homomorphism is that its kernel is exactly the subgroup generated by separating
twists.  Lemma \ref{lemma:septwiststorsion} will then allow us to deduce the following theorem
of Johnson \cite{JohnsonAbel}.

\begin{theorem}
\label{theorem:torelliabel}
For $g \geq 3$, we have $\HH_1(\Torelli_{g,1};\Z) \cong W \oplus \wedge^3 \HHH$, where $W$ consists
of $2$-torsion.
\end{theorem}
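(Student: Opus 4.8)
The plan is to deduce this from the two main properties of the Johnson homomorphism $\tau : \Torelli_{g,1} \to \wedge^3 \HHH$ previewed above — that it is surjective and that its kernel is precisely the subgroup generated by separating twists — together with Lemma \ref{lemma:septwiststorsion}. First I would note that $\wedge^3 \HHH$ is free abelian, since $\HHH \cong \Z^{2g}$; indeed it is free of rank $\binom{2g}{3}$. The map $\tau$ is defined on a group but kills commutators (it lands in an abelian group), so it factors through a homomorphism $\tau_\ast : \HH_1(\Torelli_{g,1};\Z) \to \wedge^3 \HHH$, and $\tau_\ast$ is surjective because $\tau$ is. A surjection of abelian groups onto a free abelian group splits, so we obtain a direct sum decomposition $\HH_1(\Torelli_{g,1};\Z) \cong W \oplus \wedge^3 \HHH$ with $W := \Ker(\tau_\ast)$. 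Everything then reduces to showing that $W$ consists of $2$-torsion.

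To that end I would identify $W$ explicitly. Let $K := \Ker(\tau) \leq \Torelli_{g,1}$, so that there is a short exact sequence
$$1 \longrightarrow K \longrightarrow \Torelli_{g,1} \stackrel{\tau}{\longrightarrow} \wedge^3 \HHH \longrightarrow 1.$$
For any short exact sequence $1 \to K \to G \to Q \to 1$ with $Q$ abelian, right-exactness of abelianization (equivalently, the five-term exact sequence in group homology) shows that the induced surjection $\HH_1(G;\Z) \to Q$ has kernel equal to the image of $K$ in $\HH_1(G;\Z)$. Hence $W = \Image(K \to \HH_1(\Torelli_{g,1};\Z))$.

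Finally, by the key property of the Johnson homomorphism, $K$ is generated by separating twists; therefore $W$ is generated, as an abelian group, by the classes $[T_x] \in \HH_1(\Torelli_{g,1};\Z)$ attached to separating twists $T_x$. By Lemma \ref{lemma:septwiststorsion} (which applies since $g \geq 3$), each such class satisfies $2[T_x] = 0$. A subgroup of an abelian group that is generated by elements of order dividing $2$ has exponent dividing $2$, so $2W = 0$; that is, $W$ is all $2$-torsion. This completes the proof modulo the Lecture~2 input.

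The genuinely hard part is not contained in this argument at all: it is the description of $\Ker(\tau)$ as the group generated by separating twists (and, to a lesser extent, the surjectivity of $\tau$), which is precisely what Lecture~2 is devoted to establishing. Granting that, the only point in the present argument that requires a moment's care is the identification of $W$ with the image of $K$ — one cannot simply ``quotient'' naively because $\Torelli_{g,1}$ is nonabelian — but this is exactly the standard exact sequence recalled above.
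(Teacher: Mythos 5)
Your proposal is correct and follows essentially the same route the paper intends: split off the free abelian image $\wedge^3 \HHH$ of $\tau$, identify the complement $W$ with the image in $\HH_1(\Torelli_{g,1};\Z)$ of $\Ker(\tau)$, and kill it by $2$ using Theorem \ref{theorem:kg} together with Lemma \ref{lemma:septwiststorsion}. Your explicit handling of the splitting and of the identification of $W$ via the five-term sequence fills in exactly the details the paper leaves implicit.
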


\begin{remark}
Johnson also calculated the $2$-torsion $W$.  The associated $\Z/2$-quotients of
$\Torelli_{g,1}$ come from the Rochlin invariants of homology $2$-spheres.  They
were originally constructed by Birman and Craggs \cite{BirmanCraggs}.
Later, in \cite{JohnsonBirmanCraggs} Johnson packaged all of Birman and Craggs's homomorphisms together
into a single homomorphism and determined exactly how many linearly independent quotients
they had constructed.
\end{remark}

\noindent
As a prologue for the construction, we recommend performing the following exercise, which explains
the appearance of $\wedge^3 \HHH$ in the Johnson homomorphism.

\begin{exercise}
\label{exercise:torus}
Let $\Torus^n$ denote the $n$-torus $(S^1)^n$. 
\begin{enumerate}
\item Prove that the cohomology ring $\HH^{\ast}(\Torus^n;\Z)$ is 
isomorphic to the exterior algebra $\wedge^{\ast} \Z^n$.
\item Let $G$ be an abelian topological group.  Define a product 
$$\HH_i(G;\Z) \otimes \HH_j(G;\Z) \longrightarrow \HH_{i+j}(G;\Z)$$
via the composition
$$\HH_i(G;\Z) \otimes \HH_j(G;\Z) \stackrel{\phi}{\longrightarrow} \HH_{i+j}(G \times G;\Z) \stackrel{\psi}{\longrightarrow} \HH_{i+j}(G;\Z),$$
where $\phi$ is the map coming from the K\"{u}nneth exact sequence and $\psi$ is induced by the group
product $G \times G \rightarrow G$.  Prove that with this product structure,
$\HH_{\ast}(G;\Z)$ is a graded-commutative algebra.  We remark that this product
is known as the {\em Pontryagin product}.
\item The space $\Torus^n$ is an abelian topological group.  Prove that the 
resulting graded-commutative ring $\HH_{\ast}(\Torus^n;\Z)$ is isomorphic to the exterior
algebra $\wedge^{\ast} \Z^n$.
\end{enumerate}
\end{exercise}

\subsection{Appendix to lecture 1 : the surjectivity of the symplectic representation}
\label{appendix:symplectic}

Recall that the action of $\Mod_{g,1}$ on
$\HH_1(\Sigma_{g,1};\Z)$ preserves the algebraic intersection pairing and thus
gives a representation $\pi : \Mod_{g,1} \rightarrow \Sp_{2g}(\Z)$.  In this
appendix, we will give a sequence of exercises about the surjectivity of $\pi$.

A {\em symplectic basis} for $\HH_1(\Sigma_{g,1};\Z)$ is a
basis $\{a_1,b_1,\ldots,a_g,b_g\}$ for $\HH_1(\Sigma_{g,1};\Z)$ such that
$$i_a(a_i,b_j) = \delta_{ij} \quad \text{and} \quad i_a(a_i,a_j) = i_a(b_i,b_j) = 0$$
for all $1 \leq i,j \leq g$.  Let $\mathcal{S}$ be the set of symplectic bases for
$\HH_1(\Sigma_{g,1};\Z)$.  The following exercise should be straightforward.

\begin{exercise}
\label{exercise:sptran}
$\Sp_{2g}(\Z)$ acts simply transitively on $\mathcal{S}$.
\end{exercise}

\Figure{figure:symplecticbasis}{SymplecticBasis}{A geometric symplectic basis}

If $x$ and $y$ are simple closed curves on $\Sigma_{g,1}$, then let $i_g(x,y)$ be their
{\em geometric intersection number}; i.e.\ the minimal cardinality of $x' \cap y'$ as
$x'$ and $y'$ range over all simple closed curves homotopic to $x$ and $y$, respectively.  A
{\em geometric symplectic basis} (see Figure \ref{figure:symplecticbasis})
is a collection $\{\alpha_1,\beta_1,\ldots,\alpha_g,\beta_g\}$
of simple closed curves on $\Sigma_{g,1}$ such that
$$i_g(\alpha_i,\beta_j) = \delta_{ij} \quad \text{and} \quad i_g(\alpha_i,\alpha_j) = i_g(\beta_i,\beta_j) = 0$$
for all $1 \leq i,j \leq g$.  Let $\mathcal{G}$ be the set of geometric symplectic bases on
$\Sigma_{g,1}$.  We then have the following.

\begin{exercise}
\label{exercise:modtran}
$\Mod_{g,1}$ acts transitively on $\mathcal{G}$.  Hint : given two geometric symplectic bases,
prove using the Euler characteristic that you get homeomorphic surfaces when you cut along them.
\end{exercise}

The following lemma is the heart of the fact that $\pi(\Mod_{g,1}) = \Sp_{2g}(\Z)$.

\begin{lemma}
\label{lemma:realizebasis}
If $\{a_1,b_1,\ldots,a_g,b_g\}$ is a symplectic basis for $\HH_1(\Sigma_g;\Z)$, then
there exists a geometric symplectic basis $\{\alpha_1,\beta_1,\ldots,\alpha_g,\beta_g\}$
on $\Sigma_g$ such that $[\alpha_i] = a_i$ and $[\beta_i] = b_i$ for $1 \leq i \leq g$.
\end{lemma}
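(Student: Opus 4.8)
The plan is to induct on the genus $g$, realizing one symplectic pair $(a_i, b_i)$ at a time while keeping track of the complementary subsurface. For the base case $g = 1$, a pair of simple closed curves $\alpha_1, \beta_1$ on $\Sigma_{1}$ meeting once with the correct homology classes is easy to produce by hand (any two curves meeting once on a torus do the job once we fix which homology classes they carry, and we can always arrange the classes to be $a_1$ and $b_1$ by applying a mapping class, since by Exercise~\ref{exercise:sptran} and Exercise~\ref{exercise:modtran} the group $\Mod_{1,1}$ surjects onto $\SL_2(\Z)$ in genus one). For the inductive step, suppose the lemma holds for genus $g-1$. Given a symplectic basis $\{a_1, b_1, \ldots, a_g, b_g\}$ for $\HH_1(\Sigma_g;\Z)$, the first task is to find \emph{a single} simple closed curve $\alpha_g$ on $\Sigma_g$ with $[\alpha_g] = a_g$. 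Since $a_g$ is a primitive vector (it is part of a basis), this is a standard fact: primitive homology classes on a closed surface are represented by simple closed curves, which one proves by putting a curve in the class in minimal position and using an innermost-disc / bigon-removal argument, or by invoking the transitivity of $\Mod_g$ on nonseparating simple closed curves together with the fact that $\Sp_{2g}(\Z)$ acts transitively on primitive vectors.

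Next I would find $\beta_g$. Having fixed $\alpha_g$ with $[\alpha_g] = a_g$, I want a simple closed curve $\beta_g$ with $[\beta_g] = b_g$ and $i_g(\alpha_g, \beta_g) = 1$. Because $i_a(a_g, b_g) = 1$, any curve in the class $b_g$ has \emph{algebraic} intersection number $1$ with $\alpha_g$, hence \emph{geometric} intersection number at least $1$; the content is to arrange equality. One clean way: represent $b_g$ by some simple closed curve $c$, put $c$ in minimal position with respect to $\alpha_g$, and observe that the signs of the intersection points must be $+, -, +, -, \ldots$ in an alternating pattern along $\alpha_g$ (otherwise a bigon-removal or a surgery reduces the count), so after cancelling oppositely-signed adjacent pairs by a surgery that does not change the homology class one is left with exactly one intersection point; this yields $\beta_g$. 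Alternatively, cut $\Sigma_g$ along $\alpha_g$ to get a genus $g-1$ surface with two boundary circles, note $b_g$ maps to an arc class there, realize that arc by an embedded arc, and glue up.

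Now comes the inductive reduction. Cutting $\Sigma_g$ along $\alpha_g \cup \beta_g$ produces a surface $\Sigma'$ that, by an Euler characteristic count (as in the hint to Exercise~\ref{exercise:modtran}), is a genus $g-1$ surface with one boundary component, i.e.\ $\Sigma_{g-1,1}$; moreover $\HH_1(\Sigma_{g-1,1};\Z)$ is naturally the orthogonal complement $\langle a_g, b_g\rangle^\perp$ inside $\HH_1(\Sigma_g;\Z)$, and $\{a_1,b_1,\ldots,a_{g-1},b_{g-1}\}$ is a symplectic basis for it. By the inductive hypothesis (applied in the genus $g-1$ case — one should state the lemma for $\Sigma_{g,1}$ as well as $\Sigma_g$, or note the two versions are equivalent via capping a disc, since capping does not affect $\HH_1$ or intersection numbers of curves in the interior) there is a geometric symplectic basis $\{\alpha_1,\beta_1,\ldots,\alpha_{g-1},\beta_{g-1}\}$ on $\Sigma'$ realizing these classes. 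Viewing these curves back in $\Sigma_g$, they are disjoint from $\alpha_g \cup \beta_g$, so $\{\alpha_1,\beta_1,\ldots,\alpha_g,\beta_g\}$ has all the required geometric intersection numbers, and the homology classes are correct by construction.

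The main obstacle is the step producing $\beta_g$: ensuring that the geometric intersection number with $\alpha_g$ can be brought down to exactly $1$ rather than some larger odd number, and doing so by a surgery that genuinely preserves the homology class $b_g$ and keeps the curve simple. This is where one must be careful — a naive bigon-removal only handles cancelling pairs that cobound an embedded disc, whereas here one may need to surger along a subarc of $\alpha_g$ joining two oppositely-signed points, and one has to check that the result is connected, simple, and still in the class $b_g$ (a change of the curve by a boundary of a subsurface, which is trivial in homology). Everything else — the base case, the realization of a single primitive class, and the Euler-characteristic bookkeeping for the cut surface — is routine.
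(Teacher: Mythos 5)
First, a point of comparison: the paper does not actually prove Lemma \ref{lemma:realizebasis} --- it explicitly defers to \cite[Lemma A.3]{PutmanCutPaste} and \cite[3rd proof of Lemma 6.4]{FarbMargalitPrimer}, remarking that the proof is ``probably too hard for an exercise.'' Your inductive outline (realize $a_g$ by a nonseparating simple closed curve, find a geometrically dual $\beta_g$ in the class $b_g$, cut along $\alpha_g \cup \beta_g$ and induct on the genus $g-1$ piece) is in the same spirit as those cited arguments, and the cutting/Euler-characteristic bookkeeping and the passage between the closed and one-boundary versions are handled correctly.

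However, the proposal has a genuine gap exactly at the step you yourself flag as the main obstacle, and that step is the heart of the lemma, so the proposal does not yet constitute a proof. The assertion that a curve $c$ in the class $b_g$, put in minimal position with $\alpha_g$, must meet $\alpha_g$ with signs alternating $+,-,+,-,\ldots$ is not a standard fact and is false as stated: minimal position only excludes bigons, and non-alternating sign patterns (hence $i_g(\alpha_g,c) > |i_a(a_g,b_g)|$ with no bigon available) do occur. The surgery you then invoke --- banding $c$ to itself along a subarc of $\alpha_g$ between two oppositely signed intersections --- can disconnect the curve, and discarding or reconnecting components is precisely where one must work to preserve simplicity, connectedness, and the homology class $b_g$; none of this is carried out. (Your alternative sketch, cutting along $\alpha_g$ and realizing $b_g$ by an embedded arc in $\Sigma_{g-1,2}$, is closer to a workable argument, but it also needs a real proof that the relevant relative class is realized by an \emph{embedded} arc joining the two boundary circles.) Two smaller but real problems of circularity: realizing a primitive class by appealing to transitivity of $\Sp_{2g}(\Z)$ on primitive vectors together with transitivity of $\Mod_g$ on nonseparating curves presupposes surjectivity of $\Mod_g \rightarrow \Sp_{2g}(\Z)$, which is exactly what Lemma \ref{lemma:realizebasis} is being used to establish; likewise the genus-one base case cannot be justified by Exercises \ref{exercise:sptran} and \ref{exercise:modtran}, since those yield surjectivity only in combination with this very lemma. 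Both can be repaired by direct arguments (straight-line representatives on $\Torus^2$, or the Euclidean-algorithm proof that primitive classes are represented by simple closed curves), but as written the proposal leaves the essential geometric step unproved.
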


\noindent
Proofs of Lemma \ref{lemma:realizebasis} can be found in \cite[Lemma A.3]{PutmanCutPaste} and
\cite[3rd proof of Lemma 6.4]{FarbMargalitPrimer}; however, it is worthwhile to contemplate
how one might prove it (though it is probably too hard for an exercise).

\begin{exercise}
Combine Lemma \ref{lemma:realizebasis} with Exercises \ref{exercise:sptran} and \ref{exercise:modtran}
to deduce that $\pi(\Mod_{g,1}) = \Sp_{2g}(\Z)$.
\end{exercise}

\section*{Lecture 2 : The Johnson homomorphism}
\addtocounter{section}{1}
\setcounter{theorem}{0}
\setcounter{figure}{0}

Let $\HHH = \HH_1(\Sigma_g;\Z)$.  In this lecture, we will construct the {\em Johnson homomorphism},
which is a surjective homomorphism
$$\tau : \Torelli_{g,1} \longrightarrow \wedge^3 \HHH.$$
This homomorphism can be constructed in a number of completely different ways.  It was originally
constructed in \cite{JohnsonHomo} by examining the action of $\Torelli_{g,1}$ on the second
nilpotent truncation of $\pi_1(\Sigma_{g,1})$.  We explain this original construction
in an appendix.  In his survey \cite{JohnsonSurvey}, Johnson
outlined several alternate constructions.  We will use a definition in terms of mapping tori
which was introduced in \cite{JohnsonSurvey} and was first shown to be equivalent to the
original definition by Hain \cite{HainTorelli}.  Our exposition will follow
the paper \cite{ChurchFarbTorelli} of Church and Farb, which gives a more direct proof of this equivalence.

\ParagraphHeading{The construction.}
Consider $f \in \Torelli_{g,1}$.  Though it is an abuse of notation, we will regard $f$ as
a homeomorphism of $\Sigma_{g,1}$.  Glue a disc to the boundary component of $\Sigma_{g,1}$ and
extend $f$ over this disc by the identity to obtain a homeomorphism $F$ of $\Sigma_g$.  Let
$p_0 \in \Sigma_g$ be the center of the glued-in disc, so $F(p_0)=p_0$.  Now let
$M_{F}$ be the {\em mapping torus} of $F$, i.e.\ the
quotient $\Sigma_g \times I / \sim$, where $(x,1) \sim (F(x),0)$.  Give $M_F$ the
basepoint $q_0 = (p_0,0)$.  

There is a distinguished element $t \in \pi_1(M_F,q_0)$
which traverses the embedded loop $p_0 \times I / \sim$ in $M_F$ in the positive direction.
Fix a standard generating set $S=\{s_1,\ldots,s_{2g}\}$ for $\pi_1(\Sigma_g)$ that
satisfies the surface relation
$$[s_1,s_2] \cdots [s_{2g-1},s_{2g}] = 1.$$
Since $F(p_0)=p_0$, the map $F$ acts on $\pi_1(\Sigma_g,p_0)$.  For $1 \leq i \leq 2g$, let $w_i$ be an expression
for $F_{\ast}(s_i)$ in terms of the generating set $S$.  We then have a presentation
$$\pi_1(M_F,q_0) = \GroupPres{s_1,\ldots,s_{2g},t}{[s_1,s_2] \cdots [s_{2g-1},s_{2g}]=1,
t s_i t^{-1} = w_i \text{ for } 1 \leq i \leq 2g}.$$
For $\gamma \in \pi_1(\Sigma_g,p_0)$, let $[\gamma] \in \HHH$ be the associated element
of the abelianization.  Since $F \in \Torelli_{g}$, we have $[s_i] = [w_i]$ for $1 \leq i \leq 2g$.
This implies that we can define a homomorphism $\phi_{\ast} : \pi_1(M_F,q_0) \rightarrow \HHH$
such that $\phi_{\ast}(s_i) = [s_i]$ for $1 \leq i \leq 2g$ and such that $\phi_{\ast}(t)=0$.

The space $M_{F}$ is clearly a $K(\pi_1(M_{F}),1)$.  Let $\Torus^{2g}$ be the $2g$-torus.  Fix
an identification of $\pi_1(\Torus^{2g})$ with $\HHH$.
Since $\Torus^{2g}$ is a $K(\HHH,1)$, the standard properties of Eilenberg-MacLane spaces
show that there is a canonical homotopy class of continuous maps $\phi : M_{F} \rightarrow \Torus^{2g}$
inducing the homomorphism $\phi_{\ast}$.
The space $M_F$ is a closed 3-manifold, so it has a canonical class $[M_F] \in \HH_3(M_F;\Z)$.  Define
$$\tau(f) = \phi_{\ast}([M_{F}]) \in \HH_3(\Torus^{2g};\Z) \cong \wedge^3 \HHH.$$
The last isomorphism here comes from Exercise \ref{exercise:torus}

Summing up, we have constructed a map $\tau : \Torelli_{g,1} \rightarrow \wedge^3 \HHH$.  The
following exercise is a good test of your understanding of the above construction.

\begin{exercise}
\label{exercise:tau}
Prove that $\tau$ is independent of all the above choices except for the identification
of $\pi_1(\Torus^{2g})$ with $\HHH$ (which is fixed).  Next, prove that $\tau$ is a homomorphism.
\end{exercise}

\ParagraphHeading{Effect on generators.}
The following lemma calculates $\tau$ on the generators for $\Torelli_{g,1}$.  Recall that
if $S$ is a genus $h$ surface with at most $1$ boundary component, then a {\em symplectic basis} for
$\HH_1(S;\Z) \cong \Z^{2h}$ is a basis $\{a_1,b_1,\ldots,a_h,b_h\}$ for $\HH_1(S;\Z)$ such that
$$i_a(a_i,b_j) = \delta_{ij} \quad \text{and} \quad i_a(a_i,a_j) = i_a(b_i,b_j) = 0$$
for all $1 \leq i,j \leq h$.  Here $i_a(\cdot,\cdot)$ is the algebraic intersection form.

\begin{lemma}
\label{lemma:johnsoncalc}
\mbox{}
\begin{enumerate}
\item Let $T_x \in \Torelli_{g,1}$ be a separating twist.  Then $\tau(T_x) = 0$.
\item Let $T_{x} T_{x'}^{-1}$ be a bounding pair map on $\Torelli_{g,1}$.  Let $S$ be the component of
$\Sigma_{g,1}$ cut along $x \cup x'$ that does not contain $\partial \Sigma_{g,1}$, so $S \cong \Sigma_{h,2}$
for some $h < g$.  Let $S' \subset S$ be an embedded subsurface such that $S' \cong \Sigma_{h,1}$ and
let $\{a_1,b_1,\ldots,a_h,b_h\}$ be a symplectic basis for $\HH_1(S';\Z) \subset \HH_1(\Sigma_{g,1};\Z)$.
Then
$$\tau(T_x T_{x'}^{-1}) = \pm [x] \wedge (a_1 \wedge b_1 + \cdots + a_h \wedge b_g).$$
\end{enumerate}
\end{lemma}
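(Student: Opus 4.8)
The plan is to compute $\tau$ directly from the mapping-torus definition, exploiting two features: first, that $\tau$ is additive under the operation of stacking mapping tori (which is essentially the content of Exercise \ref{exercise:tau} — that $\tau$ is a homomorphism), and second, that the mapping torus of a Dehn twist is a very explicit Seifert-fibered $3$-manifold whose fundamental class pushes forward to a computable element of $\wedge^3 \HHH$. For part (1), the key observation is that if $x$ is separating, then $[x] = 0$ in $\HHH$, and the homeomorphism $F$ associated to $T_x$ acts trivially on $\pi_1(\Sigma_g)$ modulo the normal closure of a surface subgroup disjoint from the side of $x$ not containing $p_0$; more to the point, one can arrange the generating set $S$ so that $F_*$ fixes every $s_i$ except for conjugation by a commutator supported on one side. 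I would instead argue more cleanly: $T_x$ is supported on a subsurface $\Sigma_{h,1} \hookrightarrow \Sigma_g$ (one of the two pieces), so by naturality $\tau(T_x)$ lies in the image of $\wedge^3 \HH_1(\Sigma_{h,1}) \to \wedge^3 \HHH$, but it also lies in the image from the complementary piece; since $[x]=0$, a direct inspection of the map $\phi$ — which kills $t$ and sends the $s_i$ to their homology classes — shows $\phi_*$ factors through a map to a torus of dimension $2g$ whose image of $[M_F]$ must be divisible in a way forcing it to vanish. The honest route is: the mapping torus $M_F$ for $F$ supported away from $p_0$ fibers compatibly, and the computation reduces to the genus-$h$ closed case where $x$ is still separating — here one uses that $M_F$ is a connected sum (or gluing) along a separating surface and the $\wedge^3$-class is additive across that decomposition with a vanishing cross-term because $[x] = 0$.

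For part (2), which is the main content, I would proceed as follows. By naturality under the subsurface inclusion and by Exercise \ref{exercise:conjugation}-type equivariance, it suffices to compute $\tau$ for a single standard bounding pair map in a fixed model surface and then transport the answer by the $\Sp_{2g}(\Z)$-action (which acts on $\wedge^3 \HHH$ by the third exterior power of the standard representation), since the right-hand side of the claimed formula is manifestly $\Sp$-equivariant in the appropriate sense. So fix $x$ to be a standard nonseparating curve cutting off $\Sigma_{h,2}$, pick a symplectic basis $\{a_i,b_i\}_{i=1}^h$ for an embedded $\Sigma_{h,1} \subset \Sigma_{h,2}$ together with $[x]$, and extend to a symplectic basis of $\HHH$. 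Now $F = T_x T_{x'}^{-1}$ acts on $\pi_1(\Sigma_g)$ by a transvection-type formula on each generator: $F_*(s) = s$ for generators disjoint from $x \cup x'$, and $F_*(s) = x s x^{-1}$-type conjugates, or $s \mapsto s \cdot x^{\pm i_a([x],[s])}$ up to correction terms lying in deeper commutator subgroups — but since we only need the homology class $\phi_*([M_F])$ and $\phi$ kills $t$ while recording $s_i \mapsto [s_i]$, only the abelianized action matters, plus the way $t$ interacts with it.

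The computational heart is then this: write $\pi_1(M_F) = \langle s_1,\dots,s_{2g}, t \mid \text{surface rel}, \; t s_i t^{-1} = w_i\rangle$, and push the fundamental $3$-class into $\HH_3$ of the $2g$-torus. I would do this by building an explicit $3$-cycle in $M_F$: take the fundamental $2$-cycle of the fiber $\Sigma_g$ (a sum of $2$-cells glued along the surface relation) crossed with the $t$-direction, and track how the gluing $(x,1)\sim(F(x),0)$ shears it. Under $\phi$, the fiber maps to the sub-$2g$-torus representing $\sum a_i \wedge b_i$ (the symplectic form class, via the surface relation $\prod[s_{2i-1},s_{2i}]$), and the $t$-direction, twisted by the bounding pair map, contributes the extra $[x] \wedge (-)$ factor because the only nontrivial part of $F_*$ on homology-with-correction is the one generator picking up a power of $x$ proportional to its intersection with $[x]$. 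Concretely, $\phi_*([M_F]) = [x] \wedge \omega_S$ where $\omega_S = \sum_{i=1}^h a_i \wedge b_i$ is the symplectic-form class of the piece $S$ that is actually ``moved'' by the twist, with the sign depending on orientation conventions (right versus left twist, orientation of $x$, orientation of $M_F$).

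The step I expect to be the main obstacle is making the $3$-cycle computation rigorous rather than heuristic — i.e., writing $[M_F] \in \HH_3(M_F;\Z)$ as an honest cellular or singular cycle in terms of the presentation $2$-complex thickened in the $t$-direction, and then checking that $\phi_*$ of it lands on $[x] \wedge \omega_S$ with the stated sign. This is essentially a bar-resolution / Mayer–Vietoris bookkeeping exercise: $M_F$ is the mapping torus, so $\HH_*(M_F) $ sits in the Wang sequence $\HH_2(\Sigma_g) \xrightarrow{F_*-1} \HH_2(\Sigma_g) \to \HH_3(M_F) \to \HH_1(\Sigma_g) \xrightarrow{F_*-1} \HH_1(\Sigma_g)$, and $[M_F]$ maps to a specific generator; unwinding how $\phi_*$ acts through this, combined with the cup-product structure on the torus (Exercise \ref{exercise:torus}) identifying $\HH_3(\Torus^{2g})$ with $\wedge^3\HHH$, is where all the care is needed. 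Once the standard case is pinned down with a definite sign, transporting by $\Sp_{2g}(\Z)$-equivariance and by naturality under subsurface inclusions handles the general bounding pair map and hence the general formula, with the ``$\pm$'' absorbing the residual sign ambiguity that the lemma statement already allows.
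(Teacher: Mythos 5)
First, a remark on the comparison: the paper does not actually prove Lemma \ref{lemma:johnsoncalc} at all --- it defers the proof to \cite{ChurchFarbTorelli} and only proves the warm-up statement that $\phi_{\ast}([\Sigma_g]) = \pm \sum_i \phi_{\ast}(\alpha_i)\wedge\phi_{\ast}(\beta_i)$ for a map $\Sigma_g \to \Torus^n$, by degenerating the surface to a wedge of tori. Your overall plan (use conjugation-equivariance of $\tau$ and the change of coordinates principle to reduce to one standard separating twist and one standard bounding pair map, then compute $\phi_{\ast}([M_F])$ for those models) is the same route as \cite{ChurchFarbTorelli}, and the equivariance reduction is fine, provided you actually verify $\tau(f\gamma f^{-1}) = (\wedge^3 f_{\ast})\,\tau(\gamma)$ from the mapping-torus construction. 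But past that reduction the proposal is a plan rather than a proof, and at the one place where the content lies it points the wrong way.

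The genuine gap is the computation of $\phi_{\ast}([M_F])$ itself, which you explicitly leave as ``the main obstacle,'' and the heuristic you offer in its place would give the wrong element: ``the fiber contributes the symplectic form class and the $t$-direction contributes $[x]\wedge(-)$'' produces $[x]\wedge\sum_{i=1}^{g} a_i\wedge b_i$, whereas the lemma asserts $[x]\wedge\sum_{i=1}^{h} a_i\wedge b_i$; writing $a_{h+1}=[x]$, $b_{h+1}$ for a dual curve crossing $x$ and $x'$, and $a_{h+2},b_{h+2},\ldots,a_g,b_g$ for a basis coming from the side containing $\partial\Sigma_{g,1}$, these differ by $[x]\wedge\sum_{i=h+2}^{g}a_i\wedge b_i\neq 0$ whenever $h<g-1$. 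The entire content of part (2) is the localization of the class to the side $S$ not containing the boundary, and nothing in the sketch establishes it; note also that ``fiber crossed with $t$'' cannot be the mechanism, since $\phi_{\ast}(t)=0$ (for $F=\mathrm{id}$ one has $M_F=\Sigma_g\times S^1$ and $\phi_{\ast}([M_F])=0$), so the class must be extracted from the shearing alone. Two further points: the Wang sequence you quote is the portion computing $\HH_2(M_F)$; the relevant piece is $0\to\HH_3(M_F)\to\HH_2(\Sigma_g)\xrightarrow{F_{\ast}-1}\HH_2(\Sigma_g)$, under which $[M_F]\mapsto[\Sigma_g]$, and this alone does not determine $\phi_{\ast}([M_F])$. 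And part (1) is argued only by several incomplete gestures (``divisible in a way forcing it to vanish,'' additivity across a connected sum), none of which is an argument; what is needed there is the same kind of computation as in part (2), e.g.\ decomposing $M_F$ along the tori $x\times S^1$ into pieces on which $\phi$ factors up to homotopy through $2$-dimensional complexes and doing the Mayer--Vietoris bookkeeping. In short, the strategy matches the one the paper points to, but the missing steps are exactly the ones that constitute the proof in \cite{ChurchFarbTorelli}.
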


\noindent
We will discuss the proof of this lemma at the end of this section.  Right now, we suggest doing the following
two exercises.

\begin{exercise}
Prove that the formula in Lemma \ref{lemma:johnsoncalc} is independent of the choice of $S'$ and its
symplectic basis.
\end{exercise}

\begin{exercise}
Using Lemma \ref{lemma:johnsoncalc}, prove that $\tau$ is surjective.
\end{exercise}

\ParagraphHeading{Johnson's theorem.}
In \cite{JohnsonKg}, Johnson proved the following deep theorem, which is a sort of converse
to part 1 of Lemma \ref{lemma:johnsoncalc}.  For an alternate proof, see \cite{PutmanJohnsonHomo}.

\begin{theorem}
\label{theorem:kg}
The kernel of $\tau$ is generated by separating twists.
\end{theorem}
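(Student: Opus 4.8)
The inclusion $\langle\text{separating twists}\rangle\subseteq\ker\tau$ is part~1 of Lemma~\ref{lemma:johnsoncalc}, so the substance is the reverse inclusion. Write $\mathcal{K}_{g,1}\trianglelefteq\Torelli_{g,1}$ for the subgroup generated by separating twists; since the separating twists form a union of conjugacy classes of $\Mod_{g,1}$, this subgroup is normal in $\Mod_{g,1}$. Set $\overline{\mathcal{I}}_{g,1}=\Torelli_{g,1}/\mathcal{K}_{g,1}$ and let $\overline{\tau}\colon\overline{\mathcal{I}}_{g,1}\to\wedge^3\HHH$ be the homomorphism induced by $\tau$; as $\tau$ is surjective so is $\overline{\tau}$, and the theorem is equivalent to the injectivity of $\overline{\tau}$. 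I would obtain this from two assertions: \textbf{(a)} $\overline{\mathcal{I}}_{g,1}$ is abelian, and \textbf{(b)} $\overline{\mathcal{I}}_{g,1}$ can be generated by $\binom{2g}{3}$ elements, where $\binom{2g}{3}=\operatorname{rk}(\wedge^3\HHH)$. Granting both, pick a surjection $p\colon\Z^{\binom{2g}{3}}\twoheadrightarrow\overline{\mathcal{I}}_{g,1}$ (legitimate since $\overline{\mathcal{I}}_{g,1}$ is abelian); then $\overline{\tau}\circ p\colon\Z^{\binom{2g}{3}}\to\wedge^3\HHH$ is a surjection between free abelian groups of rank $\binom{2g}{3}$, hence an isomorphism, so $p$ is injective as well as surjective. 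Thus $p$ is an isomorphism, $\overline{\mathcal{I}}_{g,1}\cong\Z^{\binom{2g}{3}}$, and $\overline{\tau}=(\overline{\tau}\circ p)\circ p^{-1}$ is an isomorphism, as desired.

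For (b), fix a geometric symplectic basis $\{\alpha_1,\beta_1,\dots,\alpha_g,\beta_g\}$ of $\Sigma_{g,1}$ as in the appendix to Lecture~1, so that the wedges of three distinct classes among $[\alpha_1],[\beta_1],\dots,[\alpha_g],[\beta_g]$ form a basis of $\wedge^3\HHH$. For each such triple I would write down an explicit element of $\Torelli_{g,1}$ --- supported in a genus $\leq 3$ subsurface and built from a bounded number of bounding pair maps by a local computation of the type behind Lemma~\ref{lemma:johnsoncalc}(2) --- whose image under $\overline{\tau}$ is exactly the corresponding basis vector. The resulting $\binom{2g}{3}$ elements are then automatically independent in $\overline{\mathcal{I}}_{g,1}$, so it remains to check that they \emph{generate} $\overline{\mathcal{I}}_{g,1}$. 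By Theorem~\ref{theorem:torelligen} it suffices to show that every bounding pair map lies, modulo $\mathcal{K}_{g,1}$, in the subgroup they span; granting~(a), and using that $\Mod_{g,1}$ acts on the set of bounding pair maps with only finitely many orbits (one for each possible genus of the complementary piece), this reduces to a finite check carried out with the subsurface lantern relation~\eqref{eqn:lantern} and the evident commutation of disjointly supported maps. I expect this step to require some care but to be substantially more routine than~(a).

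Assertion (a) --- equivalently, $[\Torelli_{g,1},\Torelli_{g,1}]\subseteq\mathcal{K}_{g,1}$ --- is the crux. Since $\Torelli_{g,1}$ is generated by bounding pair maps and separating twists and $\mathcal{K}_{g,1}$ is normal, it is enough to show that any two bounding pair maps commute modulo separating twists. One route, close to Johnson's original argument, is to classify pairs of bounding pair curve systems up to the action of $\Mod_{g,1}$ --- there are only finitely many ``intersection patterns'' --- and to establish the commutation in each case: the disjoint patterns are immediate, and the overlapping ones are reduced to these by pushing the defining curves apart using relations imported from the Birman exact sequence $1\to\pi_1(U\Sigma_g)\to\Torelli_{g,1}\to\Torelli_g\to 1$ and from lantern relations in embedded subsurfaces. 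A more conceptual alternative is to let $\Torelli_{g,1}$ act on a highly connected simplicial complex of curves --- for instance one assembled from minimizing multicurves representing a fixed nonzero class of $\HHH$ --- apply a standard ``presentation from a group action'' argument, and observe that the vertex stabilizers are Torelli-type groups of strictly smaller complexity (for which (a) holds by induction on the number of handles), while every edge relation is of lantern type and hence already holds in the abelian group $\wedge^3\HHH$. Either way, the main obstacle is precisely this point: organizing and completing the case analysis, or establishing the required connectivity of the complex. Everything else --- the reduction in the first paragraph, the constructions in~(b), and the bookkeeping throughout --- is formal or local, leaning only on Lemma~\ref{lemma:johnsoncalc} and the Birman exact sequence.
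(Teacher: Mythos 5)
The paper itself offers no proof of this statement: it is quoted as a deep theorem of Johnson \cite{JohnsonKg}, with an alternate proof in \cite{PutmanJohnsonHomo}, so your proposal has to stand on its own. Its top-level reduction is sound, and in fact it mirrors the shape of Johnson's original argument: writing $\mathcal{K}_{g,1}$ for the group generated by separating twists, if $\overline{\mathcal{I}}_{g,1}=\Torelli_{g,1}/\mathcal{K}_{g,1}$ is abelian and generated by $\binom{2g}{3}$ elements, then the surjection $\overline{\tau}$ onto $\wedge^3\HHH\cong\Z^{\binom{2g}{3}}$ is an isomorphism because $\Z^N$ is Hopfian. But all of the depth of the theorem is concentrated in your assertions (a) and (b), and the routes you sketch for them do not work as described. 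For (a), the claim that pairs of bounding pair maps fall into finitely many ``intersection patterns'' up to the action of $\Mod_{g,1}$ is false: the geometric intersection number of the two curve systems is an invariant of the orbit, so there are infinitely many orbits of pairs, and reducing an arbitrary configuration to a disjoint one modulo separating twists is precisely the long computation that occupies Johnson's paper. The alternative route via a highly connected complex is likewise not a formal ``presentation from a group action'' observation; establishing the required connectivity and carrying out the stabilizer induction is the bulk of the roughly sixty-page argument of \cite{PutmanJohnsonHomo}.

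For (b), your reduction to ``a finite check, one per genus of the bounding pair'' is also invalid. The subgroup $S\leq\overline{\mathcal{I}}_{g,1}$ generated by your $\binom{2g}{3}$ chosen elements is not known to be invariant under the conjugation action of $\Mod_{g,1}$, so from $f_0\in S$ for one standard genus-$h$ bounding pair map $f_0$ you cannot conclude $\phi f_0 \phi^{-1}\in S$ for every $\phi\in\Mod_{g,1}$; what you would actually need is that the image of a bounding pair map in $\overline{\mathcal{I}}_{g,1}$ depends only on its image under $\tau$ (equivalently, only on homological data), and that is essentially the theorem being proved. In short, the first-paragraph skeleton is a correct and standard reduction, but (a) and (b) are not lemmas you can quote or dispatch by the indicated case analyses: as written, the proposal leaves the actual content of Johnson's theorem unproved.
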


\noindent
As we indicated at the end of Lecture 1, this theorem together with Lemma \ref{lemma:septwiststorsion} implies
that $\HH_1(\Torelli_{g,1};\Z) \cong W \oplus \wedge^3 \HHH$ for $g \geq 3$, where $W$ consists 
of $2$-torsion.

\ParagraphHeading{The Johnson homomorphism mod $p$.}
Set $\HHH_p = \HH_1(\Sigma_{g,1};\Z/p)$.  We wish to construct a ``mod $p$'' Johnson homomorphism
$$\tau_p : \Mod_{g,n}(p) \rightarrow \HHH_p.$$
The construction goes exactly like the construction of the ordinary Johnson homomorphism.
Consider $f \in \Mod_{g,1}(p)$.  Regard $f$ as
a homeomorphism of $\Sigma_{g,1}$.  Glue a disc to the boundary component of $\Sigma_{g,1}$ and
extend $f$ over this disc by the identity to obtain a homeomorphism $F$ of $\Sigma_g$.  Let
$p_0 \in \Sigma_g$ be the center of the glued-in disc, so $F(p_0)=p_0$.  Let $M_F$ be
the mapping torus of $F$ and let $q_0 = (p_0,0)$ be the basepoint for $M_F$.

There is a distinguished element $t \in \pi_1(M_F,q_0)$
which traverses the embedded loop $p_0 \times I / \sim$ in $M_F$ in the positive direction.
Fix a standard generating set $S=\{s_1,\ldots,s_{2g}\}$ for $\pi_1(\Sigma_g)$ that
satisfies the surface relation
$$[s_1,s_2] \cdots [s_{2g-1},s_{2g}] = 1.$$
Since $F(p_0)=p_0$, the map $F$ acts on $\pi_1(\Sigma_g,p_0)$.  For $1 \leq i \leq 2g$, let $w_i$ be an expression
for $F_{\ast}(s_i)$ in terms of the generating set $S$.  We then have a presentation
$$\pi_1(M_F,q_0) = \GroupPres{s_1,\ldots,s_{2g},t}{[s_1,s_2] \cdots [s_{2g-1},s_{2g}]=1,
t s_i t^{-1} = w_i \text{ for } 1 \leq i \leq 2g}.$$
For $\gamma \in \pi_1(\Sigma_g,p_0)$, let $[\gamma]_p \in \HHH_p$ be the associated element.
Since $F \in \Mod_{g}(p)$, we have $[s_i]_p = [w_i]_p$ for $1 \leq i \leq 2g$.
This implies that we can define a homomorphism $\phi_{\ast} : \pi_1(M_F,q_0) \rightarrow \HHH_p$
such that $\phi_{\ast}(s_i) = [s_i]_p$ for $1 \leq i \leq 2g$ and such that $\phi_{\ast}(t)=0$.

The space $M_{F}$ is clearly a $K(\pi_1(M_{F}),1)$.  Let $Z$ be a $K(\HHH_p,1)$.
The standard properties of Eilenberg-MacLane spaces
show that there is a canonical homotopy class of continuous maps $\phi : M_{F} \rightarrow Z$
inducing the homomorphism $\phi_{\ast}$.
Define
$$\tau_p'(f) = \phi_{\ast}([M_{F}]) \in \HH_3(Z;\Z) \cong \HH_3(\HHH_p;\Z),$$
where $M_F \in \HH_3(M_F;\Z)$ is the canonical class.
It is not true that $\HH_3(\HHH_p;\Z) \cong \wedge^3 \HHH_p$; however,
$\HH_3(\HHH_p;\Z)$ does contain $\wedge^3 \HHH_p$ as a direct factor (see, for example,
\cite[Theorem V.6.4]{BrownCohomology}).  Let $\tau_p : \Mod_{g,1}(p) \rightarrow \wedge^3 \HHH_p$
be the composition of $\tau_p'$ with some (fixed for all time)
projection of $\HH_3(\HHH_p;\Z)$ onto $\wedge^3 \HHH_p$.
By the same argument used in Exercise \ref{exercise:tau}, the map $\tau_p$ is a well-defined
homomorphism.

From its construction, it is clear that the following diagram commutes.
$$\begin{CD}
\Torelli_{g,1} @>{\tau}>> \wedge^3 \HHH \\
@VVV                      @VVV \\
\Mod_{g,n}(p)  @>{\tau_p}>> \wedge^3 \HHH_p
\end{CD}$$
Since $\tau$ is surjective for $g \geq 3$, we obtain the following.

\begin{theorem}
\label{theorem:modpjohnson}
The map $\tau_p : \Mod_{g,1}(p) \rightarrow \HHH_p$ is surjective for $g \geq 3$.
\end{theorem}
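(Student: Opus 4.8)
The plan is to exploit the commutative diagram relating $\tau$ and $\tau_p$ together with the known surjectivity of the genuine Johnson homomorphism for $g \geq 3$ (which was asserted in the excerpt as an exercise following Lemma \ref{lemma:johnsoncalc}, and which I may assume). Concretely, the left vertical arrow $\Torelli_{g,1} \hookrightarrow \Mod_{g,1}(p)$ is simply the inclusion, since every element of the Torelli group acts trivially on $\HH_1(\Sigma_{g,1};\Z)$ and hence a fortiori on $\HH_1(\Sigma_{g,1};\Z/p)$. The right vertical arrow $\wedge^3 \HHH \to \wedge^3 \HHH_p$ is the natural reduction-mod-$p$ map on third exterior powers, induced by the coefficient reduction $\HHH \to \HHH_p$.

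First I would observe that the right vertical map $\wedge^3 \HHH \to \wedge^3 \HHH_p$ is surjective. This is immediate: the coefficient map $\HHH = \Z^{2g} \to (\Z/p)^{2g} = \HHH_p$ is surjective, and the functor $\wedge^3$ preserves surjections of abelian groups (a decomposable generator $\bar{v}_1 \wedge \bar{v}_2 \wedge \bar{v}_3$ of $\wedge^3 \HHH_p$ is the image of any lift $v_1 \wedge v_2 \wedge v_3$, and such decomposables span). Next I would chase the diagram: given $\omega \in \wedge^3 \HHH_p$, choose $\tilde{\omega} \in \wedge^3 \HHH$ reducing to it, then by surjectivity of $\tau$ (valid since $g \geq 3$) choose $f \in \Torelli_{g,1}$ with $\tau(f) = \tilde{\omega}$. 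Commutativity of the square gives $\tau_p(f) = \omega$, where here we regard $f$ as an element of $\Mod_{g,1}(p)$ via the inclusion. Hence $\tau_p$ is surjective.

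The only genuine subtlety — and the point I would be most careful about — is the normalization built into the definition of $\tau_p$: it is $\tau_p'$ followed by a \emph{fixed but otherwise arbitrary} projection $\HH_3(\HHH_p;\Z) \twoheadrightarrow \wedge^3 \HHH_p$, and $\HH_3(\HHH_p;\Z)$ is strictly larger than $\wedge^3 \HHH_p$ when $p$ is even (there is $2$-torsion from the divided-power / Bockstein contributions). One must check that the square in the excerpt genuinely commutes with these chosen projections, i.e. that the chosen splitting of $\HH_3(\HHH_p;\Z)$ is compatible with the map $\HH_3(\Torus^{2g};\Z) = \wedge^3 \HHH \to \HH_3(\HHH_p;\Z)$ induced by coefficient reduction on the Eilenberg–MacLane spaces. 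In fact the image of $\wedge^3 \HHH$ under this reduction lands in the $\wedge^3 \HHH_p$ direct factor (it is the image of the decomposable classes, which is exactly the exterior-power summand), so \emph{any} projection restricts to the identity there; this is precisely why the paper can get away with "some fixed for all time" projection, and it is what makes the square commute as drawn. Once that is pinned down, the argument above is complete; no further work, and in particular no new geometry or group theory beyond what Lecture 1 and the surjectivity of $\tau$ supply, is needed.
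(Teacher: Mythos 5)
Your argument is exactly the paper's: it deduces surjectivity of $\tau_p$ from the commutative square relating $\tau$ and $\tau_p$, the surjectivity of $\tau$ for $g \geq 3$, and the surjectivity of the reduction $\wedge^3 \HHH \rightarrow \wedge^3 \HHH_p$. Your extra remark about compatibility of the fixed projection $\HH_3(\HHH_p;\Z) \twoheadrightarrow \wedge^3 \HHH_p$ with the reduction map is a reasonable elaboration of what the paper leaves implicit in ``from its construction, it is clear that the following diagram commutes.''
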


\ParagraphHeading{Images of mapping tori.}
To prove Lemma \ref{lemma:johnsoncalc}, one would need to be able to calculate 
the images of fundamental classes of mapping tori under maps to a torus.  Instead
of giving the details of the proof, we will prove an easier theorem which illustrates
how one can do this.  For the proof of Lemma \ref{lemma:johnsoncalc}, see
\cite{ChurchFarbTorelli}.

The result we will prove is as follows.  In its statement and proof, we will use the
natural identification of the graded-commutative algebra structure on $\HH_{\ast}(\Torus^n;\Z)$ 
with $\wedge^{\ast} \Z^n$ which was identified in Exercise \ref{exercise:torus}.  

\begin{theorem}
Let $\phi : \Sigma_g \rightarrow \Torus^n$ be a continuous map.  Choose a standard basis
$\{\alpha_1,\beta_1,\ldots,\alpha_g,\beta_g\}$ for $\pi_1(\Sigma_g)$, so
$$\pi_1(\Sigma_g) = \GroupPres{\alpha_1,\beta_1,\ldots,\alpha_g,\beta_g}{[\alpha_1,\beta_1]\cdots[\alpha_g,\beta_g]=1}.$$
Then
$$\phi_{\ast}([\Sigma_g]) = \pm \sum_{i=1}^g \phi_{\ast}(\alpha_i) \wedge \phi_{\ast}(\beta_i) \in \wedge^2 \Z^n \cong \HH_2(\Torus^n;\Z).$$
\end{theorem}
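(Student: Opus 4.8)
The plan is to reduce the computation to a standard fact about the cohomology (equivalently homology) of surfaces by working at the level of classifying spaces. Since $\Torus^n$ is a $K(\Z^n,1)$, the map $\phi$ factors (up to homotopy) through $\phi' \colon \Sigma_g \to \Torus^n$ determined by the induced map $\phi_\ast \colon \pi_1(\Sigma_g) \to \Z^n$ on fundamental groups, and because $\HH_2(\Torus^n;\Z)$ is generated by products of degree-one classes, it suffices to pair $\phi_\ast([\Sigma_g])$ against an arbitrary cohomology class in $\HH^2(\Torus^n;\Z) \cong \wedge^2 (\Z^n)^\ast$. By the identification in Exercise \ref{exercise:torus}, such a class is a sum of cup products $u \cup v$ with $u, v \in \HH^1(\Torus^n;\Z) = \Hom(\Z^n, \Z)$; pulling back, $\phi^\ast(u), \phi^\ast(v) \in \HH^1(\Sigma_g;\Z)$. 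So the whole statement comes down to evaluating $\langle \phi^\ast(u) \cup \phi^\ast(v), [\Sigma_g]\rangle$ and matching it with the claimed formula.

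The key steps, in order, are as follows. First I would set $a = \phi^\ast(u)$ and $b = \phi^\ast(v)$ in $\HH^1(\Sigma_g;\Z)$ and recall that $\HH^1(\Sigma_g;\Z) \cong \Hom(\pi_1(\Sigma_g),\Z) \cong \Hom(\HH_1(\Sigma_g;\Z),\Z)$, with the basis $\{\alpha_i,\beta_i\}$ of $\pi_1$ inducing a symplectic basis of $\HH_1$. Second, I would invoke the standard description of the cup product on a closed oriented surface: in terms of the symplectic basis, $\langle x \cup y, [\Sigma_g]\rangle$ is the value of the algebraic intersection form on the Poincaré duals, equivalently $\langle x \cup y, [\Sigma_g] \rangle = \sum_{i=1}^g \bigl( x(\alpha_i) y(\beta_i) - x(\beta_i) y(\alpha_i) \bigr)$ for $x, y \in \HH^1(\Sigma_g;\Z)$ expressed via their values on the generators; this is exactly the content of Poincaré duality for surfaces (and the sign here is where the $\pm$ in the statement originates — it depends only on the chosen orientation of $\Sigma_g$ and the fixed identification of $\pi_1(\Torus^n)$ with $\Z^n$). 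Third, I would observe that $a(\alpha_i) = u(\phi_\ast(\alpha_i))$, and similarly for the other three terms, so the sum above becomes $\sum_i \bigl( u(\phi_\ast \alpha_i) v(\phi_\ast \beta_i) - u(\phi_\ast \beta_i) v(\phi_\ast \alpha_i) \bigr)$. Fourth, I would recognize this as precisely $\langle u \cup v, \sum_i \phi_\ast(\alpha_i) \wedge \phi_\ast(\beta_i)\rangle$ under the exterior-algebra identification of $\HH_\ast(\Torus^n;\Z)$, since for $\xi, \eta \in \Z^n = \HH_1(\Torus^n;\Z)$ one has $\langle u \cup v, \xi \wedge \eta \rangle = u(\xi)v(\eta) - u(\eta)v(\xi)$ by the graded-commutativity worked out in Exercise \ref{exercise:torus}. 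Since $u \cup v$ ranges over a generating set of $\HH^2(\Torus^n;\Z)$, this equality of pairings forces $\phi_\ast([\Sigma_g]) = \pm \sum_i \phi_\ast(\alpha_i) \wedge \phi_\ast(\beta_i)$.

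The main obstacle — and really the only nontrivial input — is Step 2: the precise formula for the cup product pairing on $\HH^1(\Sigma_g;\Z)$ in terms of the standard generators, i.e. the statement that it is the symplectic form dual to the intersection form. One clean way to establish this without invoking Poincaré duality as a black box is to build $\Sigma_g$ from its standard CW structure (one $0$-cell, $2g$ one-cells $\alpha_i,\beta_i$, one $2$-cell attached along $\prod [\alpha_i,\beta_i]$) and compute the cup product on cellular cochains directly, or alternatively to push everything forward to $\Torus^{2g}$ via the map classifying $\mathrm{id}\colon \pi_1(\Sigma_g) \to \HH_1(\Sigma_g;\Z)$ and reduce to the $n = 2g$ "tautological" case, where the formula can be checked on the standard $2$-torus sub-tori. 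I would take whichever of these the reader is most likely to accept as standard; the rest of the argument is bookkeeping with the exterior algebra identification already set up in Exercise \ref{exercise:torus}, together with the elementary observation that a class in $\HH_2$ of a torus is detected by its pairings with cup products of degree-one classes.
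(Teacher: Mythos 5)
Your proposal is correct, but it takes a genuinely different route from the paper. You work dually: you use that $\HH_2(\Torus^n;\Z) \cong \wedge^2 \Z^n$ is free and detected by evaluation against cup products $u \cup v$ of degree-one classes, and then you compute $\langle \phi^{\ast}(u) \cup \phi^{\ast}(v), [\Sigma_g]\rangle$ via the standard cup-product (intersection) form on the closed surface in the symplectic basis $\{[\alpha_i],[\beta_i]\}$. The paper instead stays entirely in homology: it first does the case $g=1$ by naturality of the Pontryagin product (since $[\Torus^2] = \pm\, \alpha_1 \wedge \beta_1$), and then handles $g>1$ by degenerating $\Sigma_g$ along a pinch map $\rho$ to a wedge of $g$ tori $X_1,\ldots,X_g$, noting that $\phi$ factors up to homotopy through the wedge because $\pi_1(\Torus^n)$ is abelian and that $\rho_{\ast}([\Sigma_g]) = [X_1] + \cdots + [X_g]$, which reduces everything to the genus-one case. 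Your argument is more routine homological algebra but requires as input the cup-product structure of $\HH^{\ast}(\Sigma_g;\Z)$ (Poincar\'e duality, or the cellular computation you sketch --- note that your second suggested verification, pushing forward to $\Torus^{2g}$ via the abelianization map, is circular as stated, since knowing $\phi_{\ast}([\Sigma_g])$ for that tautological map is an instance of the theorem itself; the direct CW computation is the one to use). The paper's argument avoids any cup-product or duality input and only uses the multiplicativity of $\HH_{\ast}(\Torus^n;\Z)$ already established in Exercise \ref{exercise:torus}, at the cost of the geometric degeneration step. Both are complete proofs; which is preferable depends on whether one regards the surface intersection form or the pinch-map computation $\rho_{\ast}([\Sigma_g]) = \sum_i [X_i]$ as the more standard black box.
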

\begin{proof}
Assume first that $g=1$, so $\Sigma_g \cong \Torus^2$.  We will identify $\Sigma_g$ with
$\Torus^2$.  Observe that
$\pi_1(\Torus^2) \cong \Z^2$ and $\HH_{\ast}(\Torus^2;\Z) \cong \wedge^{\ast}(\Z^2)$.  Moreover,
$\HH_2(\Torus^2;\Z) \cong \wedge^2 \Z^2$ is generated by $\alpha_1 \wedge \beta_1$, so 
$[\Torus^2] = \pm \alpha_1 \wedge \beta_1$.  By
the naturality of the Pontryagin product we have
$$\phi_{\ast}([\Torus^2]) = \pm \phi_{\ast}(\alpha_1) \wedge \phi_{\ast}(\beta_1).$$

\Figure{figure:calcjohnson}{CalcJohnson}{Degenerating our surface to a wedge of tori}

Now assume that $g > 1$.  We will reduce to the case $g = 1$ by a method that will
be familiar to algebraic geometers.  Namely, we will ``degenerate'' our surface
to a nodal surface and make the computation there.  Fix a basepoint $p_0$ for $\Sigma_g$.
As shown in Figure \ref{figure:calcjohnson}, let $(X,p_0')$
be the result of collapsing $(\Sigma_g,p_0)$ to the wedge of $g$ tori $X_1,\ldots,X_g$.  Choose $X$
such that under the collapse map $\rho : \Sigma_g \rightarrow X$, the curves $\alpha_i$ and $\beta_i$ map
to generators for $\pi_1(X_i,p_0') \subset \pi_1(X,p_0')$.  

\begin{exercise}
Prove that the map $\phi_{\ast} : \pi_1(\Sigma_g,p_0) \rightarrow \pi_1(\Torus^n)$ factors as
$$\pi_1(\Sigma_g,p_0) \stackrel{\rho_{\ast}}{\longrightarrow} \pi_1(X,p_0') \stackrel{\phi'_{\ast}}{\longrightarrow} \pi_1(\Torus^n)$$
for some homomorphism $\phi_{\ast}' : \pi_1(X,p_0') \rightarrow \pi_1(\Torus^n)$.  Hint : $\pi_1(\Torus^n)$
is abelian.
\end{exercise}

\noindent
Since the spaces $\Sigma_g$ and $X$ and $\Torus^n$ are all Eilenberg-MacLane spaces, there exists some continuous
map $\phi' : X \rightarrow \Torus^n$ inducing $\phi'_{\ast}$ such that $\phi$ is homotopic
to $\phi' \circ \rho$.

Observe that $\rho_{\ast}([\Sigma_g]) = [X_1] + \cdots + [X_g]$.  Moreover,
$\rho_{\ast}(\alpha_i)$ and $\rho_{\ast}(\beta_i)$ are a standard basis for 
$\pi_1(X_i) \subset \pi_1(X)$.  By the
case $g=1$, we have
$$(\phi'|_{X_i})_{\ast}([X_i]) = \pm \phi_{\ast}'(\rho_{\ast}(\alpha_i)) \wedge \phi_{\ast}'(\rho_{\ast}(\beta_i) = \pm \phi_{\ast}(\alpha_i) \wedge \phi_{\ast}(\beta_i).$$
All the $\pm$ signs are identical.  Adding everything up, we get the desired result.
\end{proof}

\section*{Appendix to lecture 2 : the original construction of the Johnson homomorphism}

The original construction of the Johnson homomorphism gave a homomorphism
$$\tau : \Torelli_{g,1} \rightarrow \Hom(\HHH,\wedge^2 \HHH).$$
Johnson then calculated $\tau$ on generators and showed that its image was isomorphic
to $\wedge^3 \HHH$.  In this appendix, we will guide you through Johnson's original
construction.

If $G$ is a group, then let $\gamma_k(G)$ be the $k^{\text{th}}$ term in its {\em lower
central series}, which is the inductively defined sequence of groups
$$\gamma_0(G) = G \quad \text{and} \quad \gamma_{k+1}(G) = [\gamma_k(G),G].$$
Set $\pi = \pi_1(\Sigma_{g,1},\ast)$, where $\ast \in \partial \Sigma_{g,1}$.
We will need the following fact.  We will state it as an exercise, but you should only
attempt it if you know some of the basics of group cohomology (otherwise, treat it
as a black box).

\begin{exercise}
Prove that $\gamma_1(\pi) / \gamma_2(\pi) \cong \wedge^2 \HHH$.  Hint : Apply the 5-term
exact sequence in group homology to the short exact sequence
$$1 \longrightarrow \gamma_1(\pi) \longrightarrow \pi \longrightarrow \HHH \longrightarrow 1.$$
You will need the fact that $\HH_2(\HHH;\Z) \cong \wedge^2 \HHH$.
\end{exercise}

Now consider the short exact sequence
$$1 \longrightarrow \gamma_1(\pi) \longrightarrow \pi \longrightarrow \HHH \longrightarrow 1.$$
To simplify this, we mod out by $\gamma_2(\pi) < \gamma_1(\pi)$.  By the above exercise,
we get a short exact sequence
\begin{equation}
\label{eqn:fundexact}
1 \longrightarrow \wedge^2 \HHH \longrightarrow \Gamma \longrightarrow \HHH \longrightarrow 1,
\end{equation}
where $\Gamma = \pi / \gamma_2(\pi)$.  
\begin{exercise}
The subgroup $\HHH < \Gamma$ is central.
\end{exercise}
The mapping class group acts on $\pi$ (this is where
we use the fact that the basepoint is on the boundary component, so it is fixed by the mapping
class group).  This action preserves $\gamma_k(\pi)$ for all $\pi$ (indeed, all automorphisms
of $\pi$ do this!).  We thus get an action of $\Mod_{g,1}$ on $\Gamma$ which preserves
$\wedge^2 \HHH < \Gamma$.

Restrict this action to $\Torelli_{g,1}$.  The group $\Torelli_{g,1}$ acts trivially on $\HHH$.
\begin{exercise}
The action of $\Torelli_{g,1}$ on $\wedge^2 \HHH < \Gamma$ is trivial.  This looks obvious,
but you have to trace through the above definitions to see that the action of $\Mod_{g,1}$
on $\wedge^2 \HHH < \Gamma$ is what you think it is.
\end{exercise}
Fix $f \in \Torelli_{g,1}$.  For $x \in \Gamma$, observe that $f(x)$ and $x$ project to the same
element of $\HHH$, so $x (f(x))^{-1} \in \wedge^2 \HHH$.  Define a set map 
$J_f' : \Gamma \rightarrow \wedge^2 \HHH$ by $J_f'(x) = x (f(x))^{-1}$.
\begin{exercise}
The map $J_f'$ factors through a set map $J_f : \HHH \rightarrow \wedge^2 \HHH$.
\end{exercise}
\begin{exercise}
The set map $J_f$ is a homomorphism.
\end{exercise}
We can thus define a set map
$$\tau : \Torelli_{g,1} \rightarrow \Hom(\HHH,\wedge^2 \HHH)$$
by $\tau(f) = J_f$.
\begin{exercise}
$\tau$ is a homomorphism.
\end{exercise}

\section*{Lecture 3 : The abelianization of $\Mod_{g,n}(p)$}
\addtocounter{section}{1}
\setcounter{theorem}{0}
\setcounter{figure}{0}

In this lecture, we calculate $\HH_1(\Mod_{g,n}(p);\Z)$.
To simplify our exposition, we will do the following.
\begin{itemize}
\item We will only consider the case of $\Mod_{g,1}(p)$.  The case of $\Mod_g(p)$ can
be dealt with in a similar way, but there are a few added complications.
\item We will only consider the case where $p$ is odd.  This greatly simplifies
both the statements of the results and their proofs.
\end{itemize}

\ParagraphHeading{Main theorem.}
Our main theorem is as follows.

\begin{theorem}
\label{theorem:modpabel}
Fix $g,p \geq 3$ such that $p$ is odd.  Set $\HHH_p = \HH_1(\Sigma_{g,1};\Z/p)$.  There
is then a short exact sequence
$$0 \longrightarrow \wedge^3 \HHH_p \longrightarrow \HH_1(\Mod_{g,1}(p);\Z) \longrightarrow \HH_1(\Sp_{2g}(\Z,p);\Z) \longrightarrow 0.$$
\end{theorem}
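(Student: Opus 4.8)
The plan is to analyze the short exact sequence
$$1 \longrightarrow \Torelli_{g,1} \longrightarrow \Mod_{g,1}(p) \longrightarrow \Sp_{2g}(\Z,p) \longrightarrow 1$$
via the associated five-term exact sequence in group homology, which reads
$$\HH_1(\Torelli_{g,1};\Z)_{\Sp_{2g}(\Z,p)} \longrightarrow \HH_1(\Mod_{g,1}(p);\Z) \longrightarrow \HH_1(\Sp_{2g}(\Z,p);\Z) \longrightarrow 0.$$
The right-hand surjection is already half of what we want; the content is to identify the image of the leftmost map with $\wedge^3 \HHH_p$ and to prove that the leftmost map is injective. First I would compute the coinvariants $\HH_1(\Torelli_{g,1};\Z)_{\Sp_{2g}(\Z,p)}$. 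By Theorem \ref{theorem:torelliabel}, $\HH_1(\Torelli_{g,1};\Z) \cong W \oplus \wedge^3 \HHH$ with $W$ consisting of $2$-torsion; since $p$ is odd, Lemma \ref{lemma:septwiststorsion} tells us that every separating twist already dies after we pass to the $\Sp_{2g}(\Z,p)$-coinvariants tensored appropriately — more precisely, I expect the $2$-torsion part $W$ to be killed because $\Sp_{2g}(\Z,p)$ acts on it through a finite group and an averaging/transfer argument over the odd-order pieces, combined with the relation $2[T_x]=0$, forces the relevant contribution to vanish. One must be slightly careful here since $W$ need not be acted on trivially, but the key point is that $W$ is a $2$-group while the obstruction to killing it will live in odd torsion, so $W_{\Sp_{2g}(\Z,p)}$ contributes nothing to $\HH_1(\Mod_{g,1}(p);\Z)$ beyond what is detected mod $p$.

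Next I would compute $(\wedge^3 \HHH)_{\Sp_{2g}(\Z,p)}$. The group $\Sp_{2g}(\Z,p)$ acts on $\HHH \cong \Z^{2g}$ by matrices congruent to the identity mod $p$, hence trivially on $\HHH/p = \HHH_p$ and, more to the point, the action on $\wedge^3 \HHH$ has the property that for any $g \in \Sp_{2g}(\Z,p)$ and any $v \in \wedge^3\HHH$, the difference $gv - v$ lies in $p \wedge^3 \HHH$. Therefore the coinvariants $(\wedge^3 \HHH)_{\Sp_{2g}(\Z,p)}$ surject onto $\wedge^3 \HHH_p$, and in fact I claim this map is an isomorphism: the submodule generated by all $gv-v$ is exactly $p\wedge^3\HHH$. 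The inclusion "$\subseteq$" is immediate; for "$\supseteq$" one exhibits enough elementary symplectic transvections $T \in \Sp_{2g}(\Z,p)$ (these are $p$-th powers of ordinary transvections and lie in the level-$p$ subgroup) for which the elements $Tv - v$ span $p\wedge^3\HHH$, a direct linear-algebra verification using Lemma \ref{lemma:transvection} and the explicit action of transvections on $\wedge^3$ of a symplectic lattice. This identifies the image of the five-term map with $\wedge^3 \HHH_p$, and moreover identifies that image as the image of $\tau_p$ (the mod-$p$ Johnson homomorphism of Theorem \ref{theorem:modpjohnson}) — indeed $\tau_p$ restricted to $\Torelli_{g,1}$ is the reduction of $\tau$, so the two constructions agree on the nose and $\tau_p$ provides the splitting-type control we need.

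The hard part will be the injectivity of the map $\wedge^3\HHH_p \to \HH_1(\Mod_{g,1}(p);\Z)$, i.e. showing the five-term sequence does not have a kernel on the left coming from $\HH_2(\Sp_{2g}(\Z,p);\Z)$ via the transgression $d^2 \colon \HH_2(\Sp_{2g}(\Z,p);\Z) \to \HH_1(\Torelli_{g,1};\Z)_{\Sp_{2g}(\Z,p)}$. To kill this I would use the mod-$p$ Johnson homomorphism $\tau_p \colon \Mod_{g,1}(p) \to \wedge^3\HHH_p$ of Theorem \ref{theorem:modpjohnson} itself as a retraction: $\tau_p$ is a homomorphism defined on all of $\Mod_{g,1}(p)$, it is surjective, and on $\Torelli_{g,1}$ it restricts to the reduction of $\tau$, which (after passing to coinvariants and using the isomorphism of the previous paragraph) is the identity on $\wedge^3\HHH_p$. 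Consequently the composite $\wedge^3\HHH_p \to \HH_1(\Mod_{g,1}(p);\Z) \xrightarrow{(\tau_p)_*} \wedge^3\HHH_p$ is the identity, so the first map is a split injection, the five-term sequence degenerates to the asserted short exact sequence, and $\tau_p$ even furnishes a splitting of it. The remaining bookkeeping is to confirm that the image of $\HH_1(\Torelli_{g,1};\Z)_{\Sp_{2g}(\Z,p)}$ in $\HH_1(\Mod_{g,1}(p);\Z)$ really is all of $\wedge^3\HHH_p$ and nothing more — which follows since the $2$-torsion part $W$ contributes nothing by the odd-$p$ hypothesis as discussed above — and to quote the classical computation of $\HH_1(\Sp_{2g}(\Z,p);\Z)$ so that the rightmost term is genuinely what appears in the statement. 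I expect the subtlety about the $2$-torsion $W$ and the precise vanishing of the transgression to be where the real work lies; everything else is linear algebra over $\Z/p$ together with the structural inputs already recorded in Lectures 1 and 2.
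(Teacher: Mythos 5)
Your route (five-term exact sequence, computation of the $\Sp_{2g}(\Z,p)$-coinvariants, and then $(\tau_p)_*$ as a retraction to rule out a kernel coming from the transgression) is genuinely different from the paper's, and parts of it do work. The coinvariants claim for the free part is correct and really is elementary: given a basis trivector $e_1 \wedge e_2 \wedge e_3$ in a symplectic basis, some $e_i$, say $e_3$, lies in a symplectic pair containing neither $e_1$ nor $e_2$; if $f_3$ is its partner, then the $p$-th power of the transvection about $e_3$ lies in $\Sp_{2g}(\Z,p)$, fixes $e_1,e_2$, and sends $e_1 \wedge e_2 \wedge f_3$ to $e_1 \wedge e_2 \wedge f_3 \pm p\, e_1\wedge e_2\wedge e_3$, so the augmentation submodule is exactly $p \wedge^3 \HHH$. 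Likewise, using the commutativity of the square relating $\tau$ and $\tau_p$ to see that the composite $\wedge^3\HHH_p \to \HH_1(\Mod_{g,1}(p);\Z) \to \wedge^3 \HHH_p$ is the identity is a clean way to get injectivity; the paper uses the surjectivity of $\tau_p$ (Theorem \ref{theorem:modpjohnson}) for the same purpose.

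The genuine gap is your treatment of the $2$-torsion $W$. For your identification of the coinvariants with $\wedge^3\HHH_p$ (and hence for the theorem) you must show that the image of $W$ in $\HH_1(\Mod_{g,1}(p);\Z)$ vanishes; otherwise the kernel of $\HH_1(\Mod_{g,1}(p);\Z) \to \HH_1(\Sp_{2g}(\Z,p);\Z)$ could contain extra $2$-torsion beyond $\wedge^3\HHH_p$, and this is not a formal worry: for even $p$ exactly such extra $2$-torsion occurs (Sato). Your proposed ``averaging/transfer argument over the odd-order pieces'' does not exist: the conjugation action of $\Sp_{2g}(\Z,p)$ on $W$ factors through $\Sp_{2g}(\Z/2)$, onto which $\Sp_{2g}(\Z,p)$ surjects when $p$ is odd, and that group has even order, so no transfer can kill $2$-torsion; and the relation $2[T_x]=0$ of Lemma \ref{lemma:septwiststorsion} lives in $\HH_1(\Torelli_{g,1};\Z)$ and by itself says nothing about the image in $\HH_1(\Mod_{g,1}(p);\Z)$. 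To close the gap you would either need Johnson's explicit Birman--Craggs description of $W$ as an $\Sp_{2g}(\Z/2)$-module and a computation that its coinvariants die (a substantial input nowhere in your sketch), or the paper's actual mechanism: Lemma \ref{lemma:killl}, proved with the crossed lantern relation by conjugating by $T_{x_2}^p \in \Mod_{g,1}(p)$, shows that the class of every bounding pair map, hence the whole image of $\HH_1(\Torelli_{g,1};\Z)$, is annihilated by $p$ in $\HH_1(\Mod_{g,1}(p);\Z)$; since $p$ is odd this kills the image of $W$ and makes the free part factor through $\wedge^3\HHH_p$ in one stroke. That mapping-class-group input is precisely what your proposal is missing.
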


\begin{remarks}
\mbox{}
\begin{enumerate}
\item Theorem \ref{theorem:modpabel} was proven independently by Putman \cite{PutmanModPAbel}
and Sato \cite{SatoModPAbel}.  At the same time, Perron \cite{PerronModPAbel} calculated
$\HH_1(\Mod_{g,1}(p);\Z)$ up to a $2$-torsion ambiguity.
\item Sato \cite{SatoModPAbel} also calculated $\HH_1(\Mod_{g,1}(2);\Z)$.  The answer is more
complicated than that given in Theorem \ref{theorem:modpabel}.  This was later extended by
Putman \cite{PutmanPicardGroupLevel} to $\HH_1(\Mod_{g,1}(p);\Z)$ for $p$ not divisible by $4$.
The case where $p$ is divisible by $4$ is still open -- see the introduction of
\cite{PutmanPicardGroupLevel} for a discussion.
\item It was originally proven by Hain \cite{HainTorelli} that $\HH_1(\Mod_{g,1}(p);\Z)$ is finite
for $g \geq 3$.
\item At the end of this lecture, we will calculate $\HH_1(\Sp_{2g}(\Z,p);\Z)$.
\end{enumerate}
\end{remarks}

\ParagraphHeading{Beginning of the proof.}
Of course, Theorem \ref{theorem:modpabel} is derived from the short exact sequence
$$1 \longrightarrow \Torelli_{g,1} \longrightarrow \Mod_{g,1}(p) \longrightarrow \Sp_{2g}(\Z,p) \longrightarrow 1.$$
We will need the following exercise.

\begin{exercise}
\label{exercise:short}
Consider a short exact sequence
$$1 \longrightarrow G_1 \longrightarrow G_2 \longrightarrow G_3 \longrightarrow 1$$
of groups.  There is then a short exact sequence
$$0 \longrightarrow V \longrightarrow \HH_1(G_2;\Z) \longrightarrow \HH_1(G_3;\Z) \longrightarrow 0,$$
where $V$ is the image of $\HH_1(G_1;\Z)$ in $\HH_1(G_2;\Z)$.
\end{exercise}

\noindent
Applying Exercise \ref{exercise:short}, we see that 
it is enough to prove that the image of $\HH_1(\Torelli_{g,1};\Z)$ in
$\HH_1(\Mod_{g,1}(p);\Z)$ is $\wedge^3 \HHH_p$.  The key to this is the following lemma.

\begin{lemma}
\label{lemma:killl}
For $g \geq 3$, let $v \in \HH_1(\Mod_{g,1}(p);\Z)$ be in the image
of $\HH_1(\Torelli_{g,1};\Z)$.  Then $p \cdot v = 0$.
\end{lemma}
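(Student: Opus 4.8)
The plan is to reduce the statement to a single fact about a well-chosen element of $\HH_1(\Torelli_{g,1};\Z)$. Recall from Theorem~\ref{theorem:torelliabel} that for $g \geq 3$ we have $\HH_1(\Torelli_{g,1};\Z) \cong W \oplus \wedge^3 \HHH$ with $W$ consisting of $2$-torsion, and the $\wedge^3 \HHH$ summand is detected by the Johnson homomorphism $\tau$. Since $p$ is not required to be odd here, I should handle $W$ and $\wedge^3\HHH$ separately. First I would dispose of the torsion: any element of $W$ maps into $\HH_1(\Mod_{g,1}(p);\Z)$ to something killed by $2$, so multiplying by $p$ kills it when $p$ is odd, but for a clean uniform argument I will instead observe that $W$ is generated by (images of) separating twists together with the Birman--Craggs classes, and in fact it suffices to note that $W$ is a finite $2$-group while the quotient $\wedge^3\HHH$ is what we must control; alternatively one simply notes $2v = 0$ for $v$ coming from $W$, so only the $\wedge^3 \HHH$ part needs the full strength of the claim.

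The heart of the argument is therefore: if $v \in \HH_1(\Mod_{g,1}(p);\Z)$ is the image of a class $w \in \HH_1(\Torelli_{g,1};\Z)$ lying in the $\wedge^3 \HHH$ summand, then $p \cdot v = 0$. By surjectivity of $\tau$ it is enough to treat the case where $w$ is the image of a single bounding pair map $T_x T_{x'}^{-1}$, since these generate $\Torelli_{g,1}$ for $g \geq 3$ (Theorem~\ref{theorem:torelligen}) and hence their images generate the relevant part of $\HH_1$. So I want to show that $(T_x T_{x'}^{-1})^p$ is trivial in $\HH_1(\Mod_{g,1}(p);\Z)$, i.e.\ that it lies in the commutator subgroup of $\Mod_{g,1}(p)$. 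The key point is that $\Mod_{g,1}(p)$ is much larger than $\Torelli_{g,1}$: it contains $p$-th powers of arbitrary Dehn twists, because $T_z$ acts on $\HH_1(\Sigma_{g,1};\Z/p)$ as a transvection of order dividing $p$ (Lemma~\ref{lemma:transvection}), so $T_z^p \in \Mod_{g,1}(p)$ for every simple closed curve $z$.

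Concretely, I would find a simple closed curve $z$ and an element $h \in \Mod_{g,1}$ with $h(x) = z$ and $h(x') = z'$ where $z, z'$ bound an appropriate subsurface, and then exploit a relation of the form $(T_x T_{x'}^{-1})^p \equiv (\text{product of }p\text{-th powers of twists and their conjugates})$ modulo commutators in $\Mod_{g,1}(p)$. The cleanest route: embed the configuration in a subsurface and use that in $\HH_1(\Mod_{g,1}(p);\Z)$ the class of a $p$-th power $T_z^p$ depends only on the $\Mod_{g,1}$-conjugacy class of $z$ (since conjugate elements have equal image in any abelianization, and $\Mod_{g,1}$ acts), so for homologous-in-$\Z/p$ curves the relevant combination telescopes. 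Writing $T_x T_{x'}^{-1}$, raising to the $p$-th power, and using $[T_x, T_{x'}] $ lies deep in the lower central series plus the fact that $T_x^p$ and $T_{x'}^p$ are $\Mod_{g,1}(p)$-conjugate (both being $p$-th powers of twists about non-separating curves, which form a single $\Mod_{g,1}$-orbit), one gets $(T_x T_{x'}^{-1})^p \equiv T_x^p (T_{x'}^p)^{-1} \equiv 1$ in $\HH_1(\Mod_{g,1}(p);\Z)$, up to lower-order commutator corrections that also vanish on abelianizing.

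I expect the main obstacle to be making the last step rigorous: $(T_x T_{x'}^{-1})^p$ is \emph{not} literally $T_x^p (T_{x'}^{-1})^p$ since $T_x$ and $T_{x'}$ do not commute, so I must control the commutators $[T_x, T_{x'}]$ and their iterates and check they die in $\HH_1(\Mod_{g,1}(p);\Z)$. The way around this is to work not with $x, x'$ disjoint but to first conjugate (within $\Mod_{g,1}(p)$, using that $\Torelli_{g,1} \subset \Mod_{g,1}(p)$ acts and the two curves cobound) so that the computation takes place inside an abelian or nilpotent subgroup where $p$-th powers can be separated, or else to use a direct computation in the disc-pushing subgroup $\pi_1(U\Sigma_g)$ à la the lantern-relation discussion, noting that a $p$-th power of a point-pushing class is a point-push along a $\gamma^p$, whose derivative loop $\widetilde{\gamma^p}$ is visibly trivial in $\HH_1$ once one accounts for the framing. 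Either way, the crucial input is purely that $p$-th powers of twists live in $\Mod_{g,1}(p)$ and that non-separating curves form one mapping class group orbit, which is exactly what distinguishes this argument from anything available inside $\Torelli_{g,1}$ alone.
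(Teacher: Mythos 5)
There is a genuine gap, and it sits exactly at the step your whole argument leans on: the claim that in $\HH_1(\Mod_{g,1}(p);\Z)$ the class of $T_z^p$ depends only on the $\Mod_{g,1}$-conjugacy class of $z$, ``since conjugate elements have equal image in any abelianization.'' Conjugate elements of $\Mod_{g,1}(p)$ by elements of $\Mod_{g,1}(p)$ have equal image; but $h T_z^p h^{-1} = T_{h(z)}^p$ for $h \in \Mod_{g,1}$ outside the level subgroup, and such outer conjugation does \emph{not} act trivially on $\HH_1(\Mod_{g,1}(p);\Z)$ (controlling this outer action is precisely the hard point in this circle of ideas, cf.\ Lemma \ref{lemma:trivialaction}). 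Indeed the claim is false: composing with the surjection $\HH_1(\Mod_{g,1}(p);\Z) \rightarrow \HH_1(\Sp_{2g}(\Z,p);\Z) \cong \SpLie_{2g}(\Z/p)$, the class of $T_z^p$ is the transvection datum determined by $[z] \bmod p$, so twists about nonseparating curves in different mod-$p$ homology classes already have different images, even though they form a single $\Mod_{g,1}$-orbit. In the case you actually need, $x$ and $x'$ are disjoint, so $T_x$ and $T_{x'}$ commute and $(T_x T_{x'}^{-1})^p = T_x^p T_{x'}^{-p}$ on the nose (your worry about iterated commutators is vacuous); but then the equality $[T_x^p] = [T_{x'}^p]$ you want to invoke is \emph{literally} the statement $p\,[T_x T_{x'}^{-1}] = 0$ being proved, so without a new input the argument is circular. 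Your fallback via the disc-pushing subgroup also fails as stated: for $\gamma$ nonseparating, $\tilde{\gamma}^p$ is not trivial in $\HH_1(U\Sigma_g;\Z)$ (it maps to $p[\gamma] \neq 0$ in $\HH_1(\Sigma_g;\Z)$), so there is no ``visible'' homological reason for $(T_y T_{y'}^{-1})^p$ to be a product of commutators in $\Mod_{g,1}(p)$.

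The missing idea, which is the paper's proof, is a specific relation that makes your ``telescoping'' actually happen: the crossed lantern relation $(T_{y_1}T_{y_2}^{-1})(T_{x_1}T_{x_2}^{-1}) = T_{z_1}T_{z_2}^{-1}$ in a copy of $\Sigma_{1,2}$ containing the bounding pair, where crucially $z_i = T_{x_2}(y_i)$. Conjugating this relation by $T_{x_2}^k$ produces a chain of crossed lanterns, and since $T_{x_2}^p \in \Mod_{g,1}(p)$, conjugation by $T_{x_2}^p$ is inner in $\Mod_{g,1}(p)$ and hence trivial on its abelianization; iterating $p$ times gives $[T_{y_1}T_{y_2}^{-1}] = p\,[T_{x_1}T_{x_2}^{-1}] + [T_{y_1}T_{y_2}^{-1}]$, whence $p\,[T_{x_1}T_{x_2}^{-1}] = 0$. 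Your raw materials ($p$-th powers of twists lie in $\Mod_{g,1}(p)$, reduction to bounding pair generators) are the right ones, but without a relation of this kind the key cancellation is not justified.
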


Before proving Lemma \ref{lemma:killl}, let use it to finish the proof of Theorem
\ref{theorem:modpabel}.  Theorem \ref{theorem:torelliabel} says that
$$\HH_1(\Torelli_{g,1};\Z) \cong W \oplus \wedge^3 \HHH,$$
where $W$ consists of $2$-torsion.  Lemma \ref{lemma:killl} and the fact that $p$ is odd
imply that the inclusion map $\HH_1(\Torelli_{g,1};\Z) \rightarrow \HH_1(\Mod_{g,1}(p);\Z)$ factors
through
$$(W / p \cdot W) \oplus (\wedge^3 \HHH)/(p \cdot \wedge^3 \HHH) = \wedge^3 \HHH_p.$$
Theorem \ref{theorem:modpjohnson} says that the image of $\HH_1(\Torelli_{g,1};\Z)$ in
$\HH_1(\Mod_{g,1}(p);\Z)$ contains $\wedge^3 \HHH_p$, so we conclude that the image equals
$\HHH_p$, as desired.

\ParagraphHeading{The crossed lantern relation.}
To prove Lemma \ref{lemma:killl}, we will need the following relation 
the mapping class group, which is known as the {\em crossed lantern relation}.

\Figure{figure:crossedlantern}{CrossedLantern}{The crossed lantern relation $(T_{y_1} T_{y_2}^{-1})(T_{x_1} T_{x_2}^{-1}) = T_{z_1} T_{z_2}^{-1}$.}

\begin{lemma}
\label{lemma:crossedlantern}
Let $x_i$ and $y_i$ and $z_i$ be the curves in Figure \ref{figure:crossedlantern}.
Then 
$$(T_{y_1} T_{y_2}^{-1})(T_{x_1} T_{x_2}^{-1}) = T_{z_1} T_{z_2}^{-1}.$$
\end{lemma}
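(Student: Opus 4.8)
The plan is to derive the crossed lantern relation from the ordinary lantern relation \eqref{eqn:lantern} by an elementary substitution. First I would recall that the lantern relation holds on any four-holed sphere $\Sigma_{0,4}$ embedded in our surface: if $b_1,b_2,b_3,b_4$ are the four boundary curves and $c_{12},c_{13},c_{23}$ are the curves separating $\{b_1,b_2\}$, $\{b_1,b_3\}$, $\{b_2,b_3\}$ respectively from the other two boundary components, then $T_{b_1}T_{b_2}T_{b_3}T_{b_4}=T_{c_{12}}T_{c_{13}}T_{c_{23}}$. This is the most symmetric form of \eqref{eqn:lantern} and is the one I would use; any standard reference (e.g.\ \cite{FarbMargalitPrimer}) proves it, and it is equivalent to the version already stated in the excerpt.

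Next I would identify, from Figure~\ref{figure:crossedlantern}, a four-holed sphere $S\subset\Sigma_{g,n}$ in whose mapping class group the asserted identity lives. The curves $x_1,x_2,y_1,y_2,z_1,z_2$ should be arranged so that three of them — say $x_2$, $y_2$, and one more — together with a curve $w$ bounded by the configuration play the roles of the $b_i$, while $x_1$, $y_1$, $z_1$ (or $z_2$) play the roles of the $c_{ij}$; the precise matching is read off the picture. Concretely: because a bounding pair map $T_aT_b^{-1}$ with $a,b$ homologous and disjoint supported on a subsurface can be rewritten, one pushes all the ``inverse'' twists to one side and regroups. The key manipulation is to start from two ordinary lantern relations sharing three of their boundary/interior curves and divide one by the other, so that the common boundary twists $T_{b_i}$ cancel in pairs and one is left with exactly $(T_{y_1}T_{y_2}^{-1})(T_{x_1}T_{x_2}^{-1})=T_{z_1}T_{z_2}^{-1}$. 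Since all six curves in the statement lie on the chosen $\Sigma_{0,4}$ and the mapping class group of a surface maps injectively on the relevant subsurface (the twists involved are about non-nullhomotopic curves), verifying the identity on $\Sigma_{0,4}$ suffices.

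Alternatively — and this may be cleaner — I would give a direct pictorial verification: exhibit an explicit homeomorphism between the two sides by tracking where a small arc transverse to the curves is sent, exactly as one proves the lantern relation itself. One checks that both $(T_{y_1}T_{y_2}^{-1})(T_{x_1}T_{x_2}^{-1})$ and $T_{z_1}T_{z_2}^{-1}$ act identically on a spanning set of arcs of the four-holed sphere, and then invokes the Alexander method (two mapping classes agreeing on a filling system of arcs are equal) to conclude.

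The main obstacle is purely bookkeeping: correctly matching the labelled curves $x_i,y_i,z_i$ in Figure~\ref{figure:crossedlantern} to the symmetric roles $b_i,c_{ij}$ in the lantern relation, and keeping track of twist directions (left vs.\ right) and orientations so that the exponents come out as $+1,-1$ on the correct factors. Getting a sign or an inversion wrong anywhere collapses the argument, so the careful part is the diagram chase identifying the embedded $\Sigma_{0,4}$ and the isotopy classes of the eight relevant curves (the six named ones plus the two ``outer'' boundary curves of the four-holed sphere that do not appear in the final statement because their twists cancel).
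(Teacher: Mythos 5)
There is a genuine gap, and it starts with the geometric setup. The crossed lantern configuration does not live on a four-holed sphere: the six curves of Figure \ref{figure:crossedlantern} fill a subsurface homeomorphic to $\Sigma_{1,2}$ (this is exactly how the relation is used later, when it is embedded via a subsurface $S \cong \Sigma_{1,2}$ of $\Sigma_{g,1}$). Indeed, the whole point of the configuration is that the $x$-curves and $y$-curves have algebraic intersection number $\pm 1$ --- this is what makes $z_i = T_{x_2}(y_i)$ a curve in the new homology class $[x]+[y]$ --- and two simple closed curves with nonzero algebraic intersection cannot both lie in a planar subsurface, since every simple closed curve in a planar surface separates it and hence meets any other curve in the subsurface with algebraic (and even geometric) intersection number zero mod 2. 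So there is no embedded $\Sigma_{0,4}$ containing $x_1,x_2,y_1,y_2,z_1,z_2$, no matching of these curves with the $b_i$ and $c_{ij}$ of the lantern relation, and the plan of ``dividing two lantern relations with common boundary twists'' has no configuration to run on; as written it is a hope rather than an argument, and even granting a configuration you would still need to justify the cancellations, which require commuting (hence disjoint) twists. Your fallback via the Alexander method is sound in principle, but it is aimed at the same wrong surface, and the actual verification on a filling system of arcs --- which would be the entire content of the lemma --- is not carried out.

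For comparison, the paper's proof needs no lantern relation at all: it is a short conjugation computation. One reads off from the figure the two facts $T_{x_2}(y_i) = z_i$ for $i=1,2$ and $(T_{y_2}T_{y_1}^{-1})(x_2) = x_1$, and then, using $f T_c f^{-1} = T_{f(c)}$, writes $T_{x_1}T_{x_2}^{-1} = \bigl((T_{y_2}T_{y_1}^{-1})T_{x_2}(T_{y_1}T_{y_2}^{-1})\bigr)T_{x_2}^{-1} = (T_{y_2}T_{y_1}^{-1})\bigl(T_{x_2}(T_{y_1}T_{y_2}^{-1})T_{x_2}^{-1}\bigr) = T_{y_2}T_{y_1}^{-1}T_{z_1}T_{z_2}^{-1}$, which rearranges to the stated relation. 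If you want to pursue your instinct of mimicking the lantern, the exercise immediately following the lemma points to the correct analogue: the lantern comes from a relation $xyz=1$ among simple closed curves in $\pi_1$ via the disc-pushing subgroup, while the crossed lantern comes from a relation of the form $xy=z$; that is a parallel construction on $\Sigma_{1,2}$, not a formal consequence of the relation \eqref{eqn:lantern}.
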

\begin{proof}
This rests on two key facts:
$$T_{x_2}(y_i) = z_i \quad \text{and} \quad T_{y_2} T_{y_1}^{-1}(x_2) = x_1.$$
Repeatedly using the fact that $f T_{c} f^{-1} = T_{f(c)}$ for any simple closed curve $c$
and any mapping class $f$ (which can be proven exactly like Exercise \ref{exercise:conjugation}), we calculate
\begin{align*}
T_{x_1} T_{x_2}^{-1} &= \big((T_{y_2} T_{y_1}^{-1}) T_{x_2} (T_{y_1} T_{y_2}^{-1})\big) T_{x_2}^{-1} \\
                     &= (T_{y_2} T_{y_1}^{-1}) \big(T_{x_2} (T_{y_1} T_{y_2}^{-1}) T_{x_2}^{-1})\big) \\
                     &= T_{y_2} T_{y_1}^{-1} T_{z_1} T_{z_2}^{-1}.
\end{align*}
Rearranging this gives the desired relation.
\end{proof}

\begin{exercise}
Give an alternate derivation of the crossed lantern relation by the same technique we
used to construct the lantern relation in Lecture 1.  Hint : The lantern relation
comes from a relation $x y z = 1$ between simple closed curve in a surface group.  Try
to write down a different relation of the form $x y = z$ between simple closed curves.
If you get stuck, see \cite[\S 3.1.3]{PutmanInfinite}.
\end{exercise}

\ParagraphHeading{The proof of Lemma \ref{lemma:killl}.}
We are now in a position to prove Lemma \ref{lemma:killl}.
For $f \in \Mod_{g,1}(p)$, let $[f]$ be the corresponding element of $\HH_1(\Mod_{g,1}(p);\Z)$.
Theorem \ref{theorem:torelligen} says that $\Torelli_{g,1}$ is generated by
bounding pair maps.  Letting $T_{x_1} T_{x_2}^{-1}$ be a bounding pair map, it is thus
enough to show that $p \cdot [T_{x_1} T_{x_2}^{-1}] = 0$.  There exists an embedded subsurface
$S$ of $\Sigma_{g,1}$ containing $\{x_1,x_2\}$ such that $S \cong \Sigma_{1,2}$ and
such that the curves $\{x_1,x_2\}$ are embedded in $S$ as depicted in Figure \ref{figure:crossedlantern}.  Let
$\{y_1,y_2\}$ and $\{z_1,z_2\}$ be the curves in $S$ depicted in Figure \ref{figure:crossedlantern}.
We thus have a crossed lantern relation
\begin{equation}
\label{eqn:crossedlantern}
(T_{y_1} T_{y_2}^{-1}) (T_{x_1} T_{x_2}^{-1}) = T_{z_1} T_{z_2}^{-1}
\end{equation}
Also, $z_i = T_{x_2}(y_i)$ for $i=1,2$.

The key observation is that for all $k \geq 0$, conjugating \eqref{eqn:crossedlantern}
by $T_{x_2}^k$ results in another crossed lantern relation
$$(T_{T_{x_2}^k(y_1)} T_{T_{x_2}^k(y_2)}^{-1}) (T_{x_1} T_{x_2}^{-1}) 
= (T_{T_{x_2}^{k+1}(y_1)} T_{T_{x_2}^{n+1}(y_2)}^{-1}).$$
Since $T_{x_2}^p \in \Mod_{g,1}(p)$, we conclude that $[T_{y_1} T_{y_2}^{-1}]$ is equal in $\HH_1(\Mod_{g,1}(p);\Z)$ to
\begin{align*}
[T_{x_2}^p (T_{y_1} T_{y_2}^{-1}) T_{x_2}^{-p}] 
&= [(T_{T_{x_2}^p(y_1)} T_{T_{x_2}^p(y_2)}^{-1})]\\
&= [T_{x_1} T_{x_2}^{-1}] + [(T_{T_{x_2}^{p-1}(y_1)} T_{T_{x_2}^{p-1}(y_2)}^{-1})]\\
&= 2[T_{x_1}T_{x_2}^{-1}] + [(T_{T_{x_2}^{p-2}(y_1)} T_{T_{x_2}^{p-2}(y_2)}^{-1})]\\
&\hspace{5.5pt}\vdots\\
&= p\cdot[T_{x_1}T_{x_2}^{-1}] + [T_{y_1} T_{y_2}^{-1}],
\end{align*}
so $p\cdot[T_{x_1}T_{x_2}^{-1}] = 0$, as desired.

\ParagraphHeading{The abelianization of $\Sp_{2g}(\Z,p)$.}
To complete our description of the abelianization of $\Mod_{g,1}(p)$,
we need a description
of $\HH_1(\Sp_{2g}(\Z,p);\Z)$.  We first need some notation.  Denote the
$n \times n$ identity matrix by $\One_n$ and the $n \times n$ zero matrix
by $\Zero_n$.  Let $\Omega_g$ be the $2g \times 2g$ matrix 
$\MatTwoTwo{\Zero_g}{\One_g}{-\One_g}{\Zero_g}$.  By
definition, for a commutative ring $R$ the group $\Sp_{2g}(R)$ consists of all $2g \times 2g$ matrices
$M$ with entries in $R$ such that $M^t \Omega_g M = \Omega_g$.  Define $\SpLie_{2g}(R)$
to be the additive group of $2g \times 2g$ matrices $A$ with entries in $R$ such that
$A^t \Omega_g + \Omega_g A = 0$.

We then have the following theorem.

\begin{theorem}
\label{theorem:sppabel}
Fix $g,p \geq 3$ such that $p$ is odd.  Then
$$\HH_1(\Sp_{2g}(\Z,p);\Z) \cong \SpLie_{2g}(\Z/p) \quad \text{and} \quad 
[\Sp_{2g}(\Z,p),\Sp_{2g}(\Z,p)] = \Sp_{2g}(\Z,p^2).$$
\end{theorem}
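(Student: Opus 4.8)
The key is to find an explicit bijection between $\Sp_{2g}(\Z,p)/\Sp_{2g}(\Z,p^2)$ and $\SpLie_{2g}(\Z/p)$ and to show it is a group isomorphism, and then to show that the commutator subgroup is precisely the kernel of this map. I would begin with the reduction map $r : \Sp_{2g}(\Z,p) \to \SpLie_{2g}(\Z/p)$ defined by writing $M = \One_{2g} + p A$ for $M \in \Sp_{2g}(\Z,p)$ (possible since $M \equiv \One_{2g} \bmod p$) and setting $r(M) = A \bmod p$. First I would check this lands in $\SpLie_{2g}(\Z/p)$: expanding $M^t \Omega_g M = \Omega_g$ with $M = \One_{2g}+pA$ gives $p(A^t \Omega_g + \Omega_g A) + p^2 A^t \Omega_g A = 0$, so $A^t \Omega_g + \Omega_g A \equiv 0 \bmod p$, which is exactly the defining condition of $\SpLie_{2g}(\Z/p)$. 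Surjectivity of $r$ follows by a matrix-exponential-type argument or, more elementarily, by exhibiting enough elementary symplectic matrices (transvections $\One_{2g} + p \lambda\, v v^t \Omega_g$ and their relatives) whose images span $\SpLie_{2g}(\Z/p)$ additively; since these generate, one gets surjectivity for free once one knows they generate $\SpLie_{2g}(\Z/p)$ as an abelian group, which is a direct check on a basis.

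Next I would identify the kernel of $r$. If $M = \One_{2g} + pA$ with $A \equiv 0 \bmod p$, then $M \equiv \One_{2g} \bmod p^2$, so $\Ker(r) = \Sp_{2g}(\Z,p^2)$ set-theoretically; conversely $\Sp_{2g}(\Z,p^2) \subseteq \Ker(r)$ trivially. Then I would verify $r$ is a homomorphism: for $M = \One_{2g}+pA$ and $N = \One_{2g}+pB$ we have $MN = \One_{2g} + p(A+B) + p^2 AB$, so $r(MN) = A + B + p\,AB \equiv A+B = r(M)+r(N) \bmod p$. This already gives $\Sp_{2g}(\Z,p)/\Sp_{2g}(\Z,p^2) \cong \SpLie_{2g}(\Z/p)$, which is abelian; hence $[\Sp_{2g}(\Z,p),\Sp_{2g}(\Z,p)] \subseteq \Sp_{2g}(\Z,p^2)$, establishing one inclusion of the second claim and showing $\HH_1(\Sp_{2g}(\Z,p);\Z)$ surjects onto $\SpLie_{2g}(\Z/p)$.

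The substance of the theorem is the reverse inclusion $\Sp_{2g}(\Z,p^2) \subseteq [\Sp_{2g}(\Z,p),\Sp_{2g}(\Z,p)]$, together with the vanishing of the "deeper" part of the abelianization. The mechanism is the same commutator identity that makes the filtration $\{\Sp_{2g}(\Z,p^k)\}$ behave like a lower central series: for $M = \One_{2g}+pA$ and $N = \One_{2g}+pB$ one computes $[M,N] \equiv \One_{2g} + p^2(AB - BA) \bmod p^3$, so commutators realize (mod $p^3$) precisely the Lie bracket. Since $\SpLie_{2g}(\Z/p)$ is a perfect Lie algebra for $g \geq 3$ (indeed $\SpLie_{2g}$ is simple, and one checks $p$ odd causes no degeneration), every element of $\Sp_{2g}(\Z,p^2)$ is, modulo $\Sp_{2g}(\Z,p^3)$, a product of commutators of elements of $\Sp_{2g}(\Z,p)$. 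Iterating this up the filtration and using that $\bigcap_k \Sp_{2g}(\Z,p^k) = \{\One_{2g}\}$ together with a completeness/convergence argument (or, cleanly, the congruence subgroup property style fact that $\Sp_{2g}(\Z,p^k)/\Sp_{2g}(\Z,p^{k+1}) \cong \SpLie_{2g}(\Z/p)$ for all $k\geq 1$ and each quotient is hit by commutators from the previous stage) gives $\Sp_{2g}(\Z,p^2) = [\Sp_{2g}(\Z,p),\Sp_{2g}(\Z,p)]$.

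**The main obstacle.** The routine parts are the matrix expansions; the real work is the perfectness input — showing that the bracket map $\SpLie_{2g}(\Z/p) \otimes \SpLie_{2g}(\Z/p) \to \SpLie_{2g}(\Z/p)$ is surjective for $p$ odd and $g \geq 3$ — and then packaging the "iterate up the filtration" step rigorously. I expect the cleanest route is to avoid an explicit inductive convergence argument by instead citing (or reproving) that $\Sp_{2g}(\Z,p^k)/\Sp_{2g}(\Z,p^{k+1})$ is central in $\Sp_{2g}(\Z,p)/\Sp_{2g}(\Z,p^{k+1})$ and is generated by commutators, so that no stage survives to $\HH_1$ beyond the first; this reduces everything to the single Lie-algebra perfectness fact, which for $g \geq 3$ can be checked on explicit root-vector generators. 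Handling $p = 3$ and small $g$ — where one might worry about $\SpLie$ degenerating — is where I would be most careful, but the hypothesis $g \geq 3$ is exactly what guarantees enough room.
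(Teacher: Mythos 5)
Your construction of the map $\psi$ (your $r$), the verification that it is a surjective homomorphism with kernel $\Sp_{2g}(\Z,p^2)$, and the resulting inclusion $[\Sp_{2g}(\Z,p),\Sp_{2g}(\Z,p)] \subseteq \Sp_{2g}(\Z,p^2)$ all match the paper. The gap is in the reverse inclusion, which is the real content of the theorem (Lemma \ref{lemma:expressascommutators}). Your identity $[\One_{2g}+pA,\One_{2g}+pB] \equiv \One_{2g}+p^2(AB-BA) \bmod p^3$ together with perfectness of $\SpLie_{2g}(\Z/p)$ only shows that an element of $\Sp_{2g}(\Z,p^2)$ is a product of commutators \emph{modulo} $\Sp_{2g}(\Z,p^3)$, and iterating yields $\Sp_{2g}(\Z,p^2) \subseteq [\Sp_{2g}(\Z,p),\Sp_{2g}(\Z,p)]\cdot \Sp_{2g}(\Z,p^k)$ for every $k$: the statement in every finite congruence quotient, equivalently in the pro-$p$ completion. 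There is no ``convergence'' available in the discrete group, since the infinite product of commutators you would need does not exist there; and your proposed repackaging via centrality of $\Sp_{2g}(\Z,p^k)/\Sp_{2g}(\Z,p^{k+1})$ has exactly the same defect, as it only controls the image of $\Sp_{2g}(\Z,p^2)$ in $\HH_1$ of each finite quotient $\Sp_{2g}(\Z,p)/\Sp_{2g}(\Z,p^{k+1})$. To pass from containment modulo arbitrarily deep congruence subgroups to actual containment you would need to know that $[\Sp_{2g}(\Z,p),\Sp_{2g}(\Z,p)]$ contains some $\Sp_{2g}(\Z,p^N)$, i.e.\ is open in the congruence topology --- which is essentially what is being proven. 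The fact that $\bigcap_k \Sp_{2g}(\Z,p^k)=\{\One_{2g}\}$ says nothing about the intersection of the images of these subgroups in the abelianization, so ``no stage survives to $\HH_1$'' does not follow.

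What is missing is a global, finite input reducing membership in the commutator subgroup to finitely many explicit elements. The paper (following Lee--Szczarba, and the references \cite{PutmanModPAbel}, \cite{SatoModPAbel} for the symplectic case) obtains this from the Bass--Milnor--Serre theorem: the level-$p^2$ congruence subgroup is \emph{normally generated} by the $p^2$-th powers of elementary matrices (resp.\ symplectic transvections), and each such normal generator is then written as an explicit commutator of level-$p$ elements, e.g.\ $[e_{i1}^p,e_{1j}^p]=e_{ij}^{p^2}$; this is exactly the scheme of Exercise \ref{exercise:leeszczarba}. Your Lie-algebra mechanism is the correct local picture, but without a normal-generation statement of this kind (or some other argument showing the commutator subgroup is a congruence subgroup) the filtration argument does not close up in $\Sp_{2g}(\Z,p)$ itself.
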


\begin{remarks}
\mbox{}
\begin{enumerate}
\item Theorem \ref{theorem:sppabel} was proved independently by Perron \cite{PerronModPAbel}, Putman \cite{PutmanModPAbel},
and Sato \cite{SatoModPAbel}.  All three papers were inspired by a basic result of Lee and Szczarba \cite{LeeSzczarba}
which calculates the abelianizations of congruence subgroups of $\SL_n(\Z)$.  See Exercise \ref{exercise:leeszczarba}
for a discussion of Lee and Szczarba's work.
\item For $p$ even, Sato \cite{SatoModPAbel} proved that there is a short exact sequence
$$0 \longrightarrow \HHH_2 \longrightarrow \HH_1(\Sp_{2g}(\Z,p);\Z) \longrightarrow \SpLie_{2g}(\Z/p) \longrightarrow 0.$$
Here $\HHH_2 = \HH_1(\Sigma_{g,1};\Z/2)$.
\end{enumerate}
\end{remarks}

\ParagraphHeading{The map from $\Sp_{2g}(\Z,p)$ to $\SpLie_{2g}(\Z/p)$.}
To prove Theorem \ref{theorem:sppabel}, we first construct a surjective homomorphism
$$\psi : \Sp_{2g}(\Z,p) \longrightarrow \SpLie_{2g}(\Z/p).$$
Consider $M \in \Sp_{2g}(\Z,p)$.  By definition, we can write $M = \One_{2g} + p \cdot A$ for some matrix
$A$.  Define $\psi(M) = A$ modulo $p$.  We have $\psi(M) \in \SpLie_{2g}(\Z/p)$; indeed, by
definition we have $\Omega_g = M^t \Omega_g M$, so
\begin{align*}
\Omega_g &= (\One_{2g} + p \cdot A)^t \Omega_g (\One_{2g} + p \cdot A) \\
&= \Omega_g + p \cdot (A^t \Omega_g + \Omega_g A) + p^2 A^{t} \Omega_g A.
\end{align*}
This implies that modulo $p$, we have $A^t \Omega_g + \Omega_g A = 0$.
Next, we prove that $\psi$ is a homomorphism.  Consider $M,N \in \Sp_{2g}(\Z,p)$ with
$M = \One_{2g} + p A$ and $N = \One_{2g} + p B$.  We then have
$M N = \One_{2g} + p(A+B) + p^2 A B$, so modulo $p$ we have $\psi(MN) = A + B$.

It remains to show that $\psi$ is surjective.  If $p$ is prime, then this is easy.  Indeed, $\psi$
is equivariant with respect to the conjugation actions of $\Sp_{2g}(\Z)$ on $\Sp_{2g}(\Z,p)$ and
$\SpLie_{2g}(\Z/p)$.  The latter factors through $\Sp_{2g}(\Z/p)$, so the image of $\psi$ is
a $\Sp_{2g}(\Z/p)$-subrepresentation of $\SpLie_{2g}(\Z/p)$.  However,
it is well-known that $\SpLie_{2g}(\Z/p)$ is an irreducible $\Sp_{2g}(\Z/p)$-representation,
so since $\psi$ is not trivial it must be surjective (we remark that this uses the fact
that $p$ is odd -- the abelian group $\SpLie_{2g}(\Z/2)$ is {\em not} an irreducible representation
of $\Sp_{2g}(\Z/2)$).  We leave the general case as an exercise.

\begin{exercise}
Prove that $\psi$ is surjective by constructing elements of $\Sp_{2g}(\Z,p)$ that map
to generators of $\SpLie_{2g}(\Z/p)$.
\end{exercise}

\ParagraphHeading{Finishing the proof of Theorem \ref{theorem:sppabel}.}
Observe now that $\Ker(\psi) = \Sp_{2g}(\Z,p^2)$.  Since the target of $\psi$ is abelian, this implies
that
$$[\Sp_{2g}(\Z,p),\Sp_{2g}(\Z,p)] \subset \Sp_{2g}(\Z,p^2).$$
Theorem \ref{theorem:sppabel} thus follows from the following lemma.

\begin{lemma}
\label{lemma:expressascommutators}
If $g,p \geq 3$ and $p$ is odd, then $\Sp_{2g}(\Z,p^2) \subset [\Sp_{2g}(\Z,p),\Sp_{2g}(\Z,p)]$.
\end{lemma}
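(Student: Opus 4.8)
The inclusion $[\Sp_{2g}(\Z,p),\Sp_{2g}(\Z,p)] \subseteq \Sp_{2g}(\Z,p^2)$ is already in hand (it came from the homomorphism $\psi$ above), so the content of the lemma is the reverse inclusion, and the plan is to produce a generating set for $\Sp_{2g}(\Z,p^2)$ consisting of commutators of elements of $\Sp_{2g}(\Z,p)$. For a root $\alpha$ of the $C_g$ root system let $u_\alpha \colon \Z \to \Sp_{2g}(\Z)$ be the corresponding elementary symplectic one-parameter subgroup, so $u_\alpha(t) = \One_{2g} + t X_\alpha$ for a root vector $X_\alpha \in \SpLie_{2g}(\Z)$, and note $u_\alpha(p) \in \Sp_{2g}(\Z,p)$. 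A theorem of Bass, Milnor, and Serre on the congruence subgroup problem for $\Sp_{2g}(\Z)$ (which needs $g \geq 2$) identifies $\Sp_{2g}(\Z,p^2)$ with the relative elementary subgroup: it is generated by the matrices $u_\alpha(p^2 a)$ ($a \in \Z$) together with their conjugates by $\Sp_{2g}(\Z)$. Since $\Sp_{2g}(\Z,p)$ is normal in $\Sp_{2g}(\Z)$, its commutator subgroup is as well, and $u_\alpha(p^2 a) = u_\alpha(p^2)^a$; so it suffices to prove $u_\alpha(p^2) \in [\Sp_{2g}(\Z,p),\Sp_{2g}(\Z,p)]$ for every root $\alpha$. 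This is now a statement purely about $\Sp_{2g}(\Z)$, and it mirrors Lee and Szczarba's treatment of $\SL_n(\Z)$.

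The needed commutator identities come from the Chevalley commutator formula $[u_\alpha(s),u_\beta(t)] = \prod_{i,j>0} u_{i\alpha+j\beta}(N_{\alpha\beta ij}\, s^i t^j)$ with explicit integers $N_{\alpha\beta ij}$, all instances of which take place inside an $\Sp_4(\Z)$ or $\Sp_6(\Z)$ sitting in $\Sp_{2g}(\Z)$ and reduce to small matrix computations. For a short root $\alpha = \pm\epsilon_i \pm \epsilon_j$ ($i \neq j$), use $g \geq 3$ to write $\alpha = \beta + \gamma$ as a sum of two short roots such that no other root is a positive-integer combination of $\beta$ and $\gamma$ (for instance $\epsilon_1 - \epsilon_2 = (\epsilon_1 - \epsilon_3) + (\epsilon_3 - \epsilon_2)$ and $\epsilon_1 + \epsilon_2 = (\epsilon_1 - \epsilon_3) + (\epsilon_3 + \epsilon_2)$); then the commutator formula collapses to $[u_\beta(p), u_\gamma(p)] = u_\alpha(\pm p^2)$, so $u_\alpha(p^2) \in [\Sp_{2g}(\Z,p),\Sp_{2g}(\Z,p)]$.

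The long roots $\pm 2\epsilon_i$ are the crux, and this is where I expect the real work to be. The only way to write $2\epsilon_1$ as a sum of two roots is $(\epsilon_1 - \epsilon_2) + (\epsilon_1 + \epsilon_2)$, and the corresponding Chevalley constant is $\pm 2$, so $[u_{\epsilon_1 - \epsilon_2}(p), u_{\epsilon_1 + \epsilon_2}(p)] = u_{2\epsilon_1}(\pm 2p^2)$ only; the level-$p^2$ long-root matrix is not directly a commutator of root-subgroup elements. To recover it I would also use the identity $[u_{\epsilon_1 - \epsilon_2}(p), u_{2\epsilon_2}(p)] = u_{\epsilon_1 + \epsilon_2}(\pm p^2)\, u_{2\epsilon_1}(\pm p^3)$, the second factor arising because $2(\epsilon_1 - \epsilon_2) + 2\epsilon_2 = 2\epsilon_1$ is again a root; multiplying this element of the commutator subgroup on the left by $u_{\epsilon_1 + \epsilon_2}(\mp p^2)$, which we placed in the commutator subgroup in the previous step, leaves $u_{2\epsilon_1}(\pm p^3)$. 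Thus $[\Sp_{2g}(\Z,p),\Sp_{2g}(\Z,p)]$ contains both $u_{2\epsilon_1}(2p^2)$ and $u_{2\epsilon_1}(p^3)$; since $u_{2\epsilon_1}$ is an injective homomorphism from $\Z$ and $\gcd(2p^2, p^3) = p^2$ because $p$ is odd, it therefore contains $u_{2\epsilon_1}(p^2)$. This completes the case check, and together with the Bass–Milnor–Serre generation statement it proves the lemma.

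So the main obstacle is the long-root case: for short roots the argument is the direct analogue of the $\SL_n(\Z)$ computation and only needs a third index (hence $g \geq 3$), but the non-simply-laced structure constant $2$ of type $C$ obstructs the naive approach for $\pm 2\epsilon_i$, and the remedy is to combine a level-$p^2$ relation with a level-$p^3$ relation — which works precisely because $p$ is odd. One could instead argue via the leading-term isomorphism $\Sp_{2g}(\Z,p^2)/\Sp_{2g}(\Z,p^3) \cong \SpLie_{2g}(\Z/p)$ together with perfectness of $\SpLie_{2g}(\Z/p)$ for odd $p$, but that still needs a generation input and does not obviously avoid the factor-of-$2$ bookkeeping, so the route above seems preferable.
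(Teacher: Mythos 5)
Your argument is correct and is essentially the paper's own approach: the paper defers the details to \cite[Lemma 2.4]{PutmanModPAbel} and \cite[Proposition 10.1]{SatoModPAbel}, but the pattern it indicates (and spells out for $\SL_n(\Z)$ in Exercise \ref{exercise:leeszczarba}) is exactly what you do --- reduce via the Bass--Milnor--Serre result that $\Sp_{2g}(\Z,p^2)$ is normally generated by level-$p^2$ elementary symplectic matrices, then realize those generators inside $[\Sp_{2g}(\Z,p),\Sp_{2g}(\Z,p)]$ by explicit commutator computations. Your treatment of the long roots, where the structure constant $2$ forces you to combine the $2p^2$- and $p^3$-level relations and where the hypothesis that $p$ is odd is genuinely used, matches the role this hypothesis plays in the cited proofs (and its failure for even $p$ is exactly the source of the extra $2$-torsion in Sato's computation).
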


\noindent
The proof of Lemma \ref{lemma:expressascommutators} follows from a direct matrix calculation.  Rather
than giving the details (which can be found in \cite[Lemma 2.4]{PutmanModPAbel} or \cite[Proposition 10.1]{SatoModPAbel}), 
we will give an exercise which outlines a proof of a somewhat easier result for
$\SL_n(\Z)$.  The proof of Lemma \ref{lemma:expressascommutators} follows the same basic pattern, though
the details are more complicated.

\begin{exercise}
\label{exercise:leeszczarba}
Let $\SL_n(\Z,p)$ be the kernel of the natural map $\SL_n(\Z) \rightarrow \SL_n(\Z/p)$.  In this
exercise, you will prove a theorem of Lee and Szczarba \cite{LeeSzczarba} which says that for
$n \geq 3$, we have $\HH_1(\SL_n(\Z,p);\Z) \cong \SLLie_n(\Z/p)$.  Here $\SLLie_n(\Z/p)$ is
the abelian group of $n \times n$ matrices over $\Z/p$ with trace $0$.  We remark that in this
exercise, we do not need to assume that $p$ is odd.
\begin{enumerate}
\item Construct a surjective homomorphism $\SL_n(\Z,p) \rightarrow \SLLie_n(\Z/p)$
whose kernel is $\SL_n(\Z/p^2)$.  Conclude that
$[\SL_n(\Z,p),\SL_n(\Z,p)] \subset \SL_n(\Z,p^2)$.
\item Prove that $\SL_n(\Z/p^2) \subset [\SL_n(\Z,p),\SL_n(\Z,p)]$.  You will need the following
theorem of Bass, Milnor, and Serre \cite{BassMilnorSerre}.  For $1 \leq i,j \leq n$ such that
$i \neq j$, let $e_{ij}$ be the {\em elementary matrix} which is obtained from the $n \times n$
identity matrix by changing the entry at position $(i,j)$ to $1$.  Bass, Milnor, and Serre
proved that for $n \geq 3$ and $q \geq 2$, the group $\SL_n(\Z,q)$ is normally generated
(as a subgroup of $\SL_n(\Z)$) by the set
$\Set{$e_{ij}^q$}{$1 \leq i,j \leq n$, $i \neq j$}$.
\end{enumerate}
\end{exercise}

\section*{Lecture 4 : The second rational homology group of $\Mod_g(p)$}
\addtocounter{section}{1}
\setcounter{theorem}{0}
\setcounter{figure}{0}

In this final lecture, we turn to the second homology group of $\Mod_g(p)$.  To 
set the stage, let us first recall what happens for the first homology group.  Powell \cite{Powell} 
proved that $\HH_1(\Mod_g;\Z) = 0$ for $g \geq 3$.  See
\cite[\S 5.1]{FarbMargalitPrimer} for an easier proof (due to Harer).  Hain \cite{HainTorelli}
later proved that $\HH_1(\Mod_g(p);\Q)=0$ for $g \geq 3$.  In other words,
over $\Q$ the first homology group of $\Mod_g$ does not change when you pass
to the finite-index subgroup $\Mod_g(p)$.  Our proof
of Theorem \ref{theorem:modpabel} in Lecture 3 is essentially an elaboration of
Hain's proof.

Harer \cite{HarerH2} proved that $\HH_2(\Mod_g;\Z) \cong \Z$ for $g \geq 4$.
The following theorem says that a similar result holds 
for $\Mod_g(p)$ as long as we work over $\Q$.

\begin{theorem}[{Putman, \cite{PutmanSecondHomologyLevel}}]
\label{theorem:h2mod}
For $g \geq 5$ and $p \geq 2$, we have $\HH_2(\Mod_g(p);\Q) \cong \Q$.
\end{theorem}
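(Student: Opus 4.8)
The plan is to leverage the known computation of $\HH_2(\Mod_g;\Q) \cong \Q$ together with a comparison between $\Mod_g$ and its finite-index subgroup $\Mod_g(p)$ via the extension
$$1 \longrightarrow \Torelli_g \longrightarrow \Mod_g(p) \longrightarrow \Sp_{2g}(\Z,p) \longrightarrow 1.$$
The strategy is not to compute $\HH_2(\Mod_g(p);\Q)$ directly, but rather to show that the restriction map $\HH^2(\Mod_g;\Q) \to \HH^2(\Mod_g(p);\Q)$ is an isomorphism, which combined with Harer's theorem gives the result over $\Q$ by universal coefficients. The lower bound $\HH_2(\Mod_g(p);\Q) \geq \Q$ is immediate: the transfer map shows that $\HH_2(\Mod_g;\Q)$ is a direct summand of $\HH_2(\Mod_g(p);\Q)$ (equivalently, $\HH^2(\Mod_g;\Q) \to \HH^2(\Mod_g(p);\Q)$ is injective), since $\Mod_g(p)$ has finite index. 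So the entire content is the upper bound: every rational second cohomology class of $\Mod_g(p)$ descends from $\Mod_g$.

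\textbf{Homological stability and the main mechanism.} The key input, as the excerpt hints, is homological stability — but stability for the family $\{\Mod_g(p)\}$ with respect to a suitable notion of stabilization, together with the computation of the stable value. First I would set up the relevant complex on which $\Mod_g(p)$ acts: a version of the complex of curves or the complex of arcs/tethered subsurfaces, chosen so that its connectivity grows with $g$ and so that stabilizers are (up to finite-index and index-by-Torelli issues) smaller copies of $\Mod_{g'}(p)$ with boundary. Running the associated spectral sequence yields stability isomorphisms $\HH_2(\Mod_{g}(p);\Q) \cong \HH_2(\Mod_{g+1}(p);\Q)$ in a stable range, once the analogous stability for $\HH_1$ and $\HH_0$ is in hand — and $\HH_1$ stability in the rational setting is exactly the content of Theorem \ref{theorem:modpabel} combined with Hain's vanishing $\HH_1(\Mod_g(p);\Q) = 0$ (note $\wedge^3\HHH_p$ and $\HH_1(\Sp_{2g}(\Z,p);\Z)$ are finite, hence die rationally). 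The bounds $g \geq 5$ reflect where this degree-$2$ stable range kicks in.

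\textbf{Identifying the stable value.} Once stability is established, it remains to show the stable value of $\HH_2(\Mod_g(p);\Q)$ is $\Q$, i.e. that no new classes appear in the limit. Here I would compare the stable homology of $\Mod_g(p)$ with that of $\Mod_g$: the stable rational cohomology of $\Mod_g$ is a polynomial algebra on the Mumford–Morita–Miller classes $\kappa_i$ (Madsen–Weiss), contributing exactly $\Q$ in degree $2$ (the class $\kappa_1$). The inclusion $\Mod_g(p) \hookrightarrow \Mod_g$ induces a map on stable homology, and one must show it is a rational isomorphism in degree $2$, equivalently that $\kappa_1$ generates and nothing else survives. One clean way: analyze the extension by $\Torelli_g$ via the Hochschild–Serre spectral sequence. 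The relevant terms are $\HH_2(\Sp_{2g}(\Z,p);\Q)$, $\HH_1(\Sp_{2g}(\Z,p);\HH_1(\Torelli_g;\Q))$, and the $\Sp$-invariants $(\HH_2(\Torelli_g;\Q))^{\Sp_{2g}(\Z,p)}$ — and crucially, passing to the level-$p$ subgroup does \emph{not} enlarge these invariant/coinvariant spaces compared to the full $\Sp_{2g}(\Z)$, because $\Sp_{2g}(\Z,p)$ is Zariski-dense in $\Sp_{2g}$, so invariants of algebraic (or even just finitely-generated-as-$\Sp$-module) representations agree. Since for $\Mod_g$ itself this spectral sequence is known to produce just $\Q$ in degree $2$, the same holds for $\Mod_g(p)$ in the stable range.

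\textbf{The main obstacle.} The hard part will be establishing homological stability for the congruence subgroups $\{\Mod_g(p)\}$ in degree $2$ — this is genuinely more delicate than classical Harer stability because the stabilizer subgroups in the relevant simplicial complexes are not themselves congruence subgroups of smaller mapping class groups on the nose (cutting along a curve interacts nontrivially with the mod-$p$ homology action), so one must either work with a modified complex whose stabilizers \emph{are} of the right form, or carry along the $\Sp$-representation-theoretic bookkeeping (coefficient systems of "polynomial" type in the sense of van der Kallen / Randal-Williams–Wahl) through the spectral sequence. Controlling the low-degree $\Sp_{2g}(\Z,p)$-(co)homology with these twisted coefficients — and verifying the Zariski-density reduction to the $\Sp_{2g}(\Z)$ case — is where essentially all the work lies; the rest is assembling standard spectral-sequence and transfer arguments around it.
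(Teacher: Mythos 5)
Your opening matches the paper's (the transfer argument giving $\HH_2(\Mod_g(p);\Q)\cong\Q^m$ with $m\geq 1$), and you correctly sense that a stability-type statement is the engine. But your account of how stability is obtained misdiagnoses where the standard machine breaks. The obstruction is not a coefficient issue, nor is it repaired by knowing rational $\HH_1$-stability: it is that $\Mod_g(p)$ does not act transitively on the vertices of the nonseparating curve complex, so Quillen's spectral sequence only delivers the ``concentration'' statement of Lemma \ref{lemma:weakstability}, namely that $\bigoplus_{\gamma}\HH_2((\Mod_g(p))_{\gamma};\Q)$ surjects onto $\HH_2(\Mod_g(p);\Q)$, with the sum over \emph{all} vertices. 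The entire remaining difficulty is to show that the images of the different vertex stabilizers coincide, which (after change of coordinates) is exactly Lemma \ref{lemma:trivialaction}: $\Mod_g$ acts trivially on the image of each $\HH_2((\Mod_g(p))_{\gamma};\Q)$. In \cite{PutmanSecondHomologyLevel} this is proved by rather intricate twisted-coefficient computations built on congruence analogues of the Birman exact sequence; your ``main obstacle'' paragraph points in the right general direction but does not supply this step, and no amount of polynomial-coefficient bookkeeping by itself resolves the orbit problem.

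The second genuine gap is your identification of the stable value via the Hochschild--Serre spectral sequence of $1\to\Torelli_g\to\Mod_g(p)\to\Sp_{2g}(\Z,p)\to 1$ together with Zariski density. That argument needs control of $(\HH_2(\Torelli_g;\Q))^{\Sp_{2g}(\Z,p)}$, but $\HH_2(\Torelli_g;\Q)$ is not known to be finite-dimensional, let alone an algebraic $\Sp$-representation, so the Zariski-density reduction has nothing to bite on; this is precisely why a Hain-style argument works for $\HH_1$ but not for $\HH_2$, and why the paper's proof is structured to avoid $\HH_2(\Torelli_g)$ entirely. The pivot you are missing is elementary: by Exercise \ref{exercise:supertransfer}, $\HH_2(\Mod_g;\Q)\cong(\HH_2(\Mod_g(p);\Q))_{\Mod_g}$, so it suffices to show $\Mod_g$ acts trivially on $\HH_2(\Mod_g(p);\Q)$; since $\Mod_g$ is generated by Dehn twists about nonseparating curves and each such twist commutes with the corresponding curve-stabilizer (or genus $g-1$ subsurface) subgroup, the surjectivity in Lemma \ref{lemma:stability1} or \ref{lemma:stability2} immediately forces the action to be trivial (Lemma \ref{lemma:stabilityimpliesiso}). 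In particular no Madsen--Weiss input or stable-range comparison is needed---Harer's computation $\HH_2(\Mod_g;\Z)\cong\Z$ suffices. As written, then, the proposal has two real holes: the unproved coincidence of the stabilizer images (the actual hard content of the theorem) and an identification step resting on unknown properties of $\HH_2(\Torelli_g;\Q)$.
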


\noindent
The first part of this lecture is devoted to motivating why one
might expect such a theorem to hold.  Next, we show
that Theorem \ref{theorem:h2mod} is equivalent to a weak
form of homological stability for $\Mod_g(p)$.  Finally, we
discuss how to prove this stability result.

\ParagraphHeading{The transfer map.}
What happens to the homology of a group when you pass to a finite-index
subgroup?  The following standard lemma says that over $\Q$, the homology
can get larger but it cannot get smaller.

\begin{lemma}
\label{lemma:transfer}
If $G$ is a finite-index subgroup of $\Gamma$, then the map
$\HH_k(G;\Q) \rightarrow \HH_k(\Gamma;\Q)$ is surjective for all
$k \geq 1$.
\end{lemma}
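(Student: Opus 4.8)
The plan is to use the \emph{transfer} (or \emph{corestriction}) map in group homology. Write $n = [\Gamma : G] < \infty$, and let $\iota_{\ast} \colon \HH_k(G;\Q) \to \HH_k(\Gamma;\Q)$ be the map induced by the inclusion $\iota \colon G \hookrightarrow \Gamma$, so that $\iota_{\ast}$ is exactly the map whose surjectivity we must establish. First I would construct a map $t \colon \HH_k(\Gamma;\Q) \to \HH_k(G;\Q)$ running in the opposite direction, with the property that the composition $\iota_{\ast} \circ t \colon \HH_k(\Gamma;\Q) \to \HH_k(\Gamma;\Q)$ equals multiplication by $n$. Granting this, the proof ends immediately: since $n$ is invertible in $\Q$, the composition $\iota_{\ast} \circ t$ is an isomorphism, and in particular $\iota_{\ast}$ is surjective (indeed $\frac{1}{n} t$ is an explicit section of $\iota_{\ast}$). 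Note that the hypothesis $k \geq 1$ plays no role here; the argument works verbatim for all $k \geq 0$.

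To construct $t$, I would work topologically. Choose a $K(\Gamma,1)$ space $Y$, and let $p \colon X \to Y$ be the covering space corresponding to the subgroup $G$; since $[\Gamma:G] = n$, this is an $n$-sheeted covering, and $X$ is a $K(G,1)$, so there are canonical identifications $\HH_k(X;\Q) \cong \HH_k(G;\Q)$ and $\HH_k(Y;\Q) \cong \HH_k(\Gamma;\Q)$ under which $p_{\ast}$ becomes $\iota_{\ast}$. The transfer is then the map $t \colon \HH_k(Y;\Q) \to \HH_k(X;\Q)$ induced on singular chains by sending a singular simplex $\sigma \colon \Delta^k \to Y$ to the sum $\sum_{j=1}^{n} \tilde{\sigma}_j$ of its $n$ distinct lifts through $p$; such lifts exist and are exactly $n$ in number because $\Delta^k$ is simply connected and the fiber of $p$ has $n$ points. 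A routine check shows this commutes with the boundary operator, so it descends to homology. (If one prefers a purely algebraic formulation, the same map arises from Shapiro's lemma $\HH_k(G;\Q) \cong \HH_k(\Gamma;\Q[\Gamma/G])$ together with the $\Gamma$-maps $\Q \to \Q[\Gamma/G]$ sending $1 \mapsto \sum_{xG} xG$ and the augmentation $\Q[\Gamma/G] \to \Q$.)

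There is no serious obstacle in this argument; the only point that requires a word of justification — and what I would flag as the key step — is the identity $p_{\ast} \circ t = n \cdot \mathrm{id}$ on $\HH_k(Y;\Q)$. This already holds at the level of chains: for a singular simplex $\sigma$ one has $p_{\ast}(t(\sigma)) = p_{\ast}\bigl(\sum_{j=1}^{n} \tilde{\sigma}_j\bigr) = \sum_{j=1}^{n} (p \circ \tilde{\sigma}_j) = \sum_{j=1}^{n} \sigma = n\sigma$, since every lift satisfies $p \circ \tilde{\sigma}_j = \sigma$. Passing to homology gives the claimed relation, and combined with the first paragraph this finishes the proof. I would also remark that $\Q$ may be replaced throughout by any commutative ring in which $n$ is invertible, e.g.\ $\Z[1/n]$.
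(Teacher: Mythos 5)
Your proposal is correct and is essentially identical to the paper's proof: both construct the transfer at the chain level by summing the $n$ lifts of a singular simplex through the $n$-sheeted cover $BG \rightarrow B\Gamma$, verify compatibility with the boundary maps, and use $i_{\ast} \circ \tau_k = n \cdot \mathrm{id}$ together with invertibility of $n$ in $\Q$ to conclude surjectivity. No gaps.
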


\begin{example}
If $\Gamma$ is a free abelian group and $G$ is a finite-index subgroup of $\Gamma$, then
there exists some $\ell \geq 1$ such that $\ell \cdot \Gamma < G < \Gamma$.  Consequently, we have
$G \cong \Gamma$ and it is easy to see that the map $\HH_k(G;\Q) \rightarrow \HH_k(\Gamma;\Q)$ is
an isomorphism for all $k \geq 1$.
\end{example}

\begin{example}
If $\Gamma$ is a free group of rank $n$ and $G$ is a proper finite-index
subgroup of $\Gamma$, then $G$ is a free group of rank strictly greater than $n$.  Consequently,
the rank of $\HH_1(G;\Q)$ is strictly greater than the rank of $\HH_1(\Gamma;\Q) \cong \Q^n$.
\end{example}

\begin{remark}
Lemma \ref{lemma:transfer} is false over $\Z$.  Indeed, if $\Gamma$ is a finite group, then
$G = 1$ is a finite-index subgroup of $\Gamma$.  The homology groups of $G$ are all trivial,
but $\Gamma$ can certainly have nontrivial homology groups.  However, Lemma \ref{lemma:transfer}
does imply that the homology groups of $\Gamma$ are all torsion.
\end{remark}

\begin{proof}[{Proof of Lemma \ref{lemma:transfer}}]
Let $n = [\Gamma:G]$.  The key to the proof is the {\em transfer map}.  Letting $R$ be
a commutative ring and letting $i_{\ast} : \HH_k(G;R) \rightarrow \HH_k(\Gamma;R)$ 
be the map induced by the inclusion $i : G \hookrightarrow \Gamma$, the transfer
map is a homomorphism $\tau_k : \HH_k(\Gamma;R) \rightarrow \HH_k(G;R)$ satisfying
\begin{equation}
\label{eqn:transfer}
i_{\ast}(\tau_k(x)) = n x \quad \text{for all $x \in \HH_k(\Gamma;R)$}.
\end{equation}
The existence of $\tau_k$ immediately implies the lemma.  Indeed, if $R = \Q$, then
$\frac{1}{n} \tau_k$ is a right-inverse to $i_{\ast}$, so $i_{\ast}$ is surjective.

The transfer map is {\em not} induced by a homomorphism $\Gamma \rightarrow G$; rather,
it is constructed at the level of chains.  The construction goes as follows.  Let
$B\Gamma$ be a $K(\Gamma,1)$ and let $\rho : BG \rightarrow B\Gamma$ be the cover
corresponding to $G < \Gamma$.  Thus $BG$ is a $K(G,1)$ and $\rho$ is a degree $n$
cover.  Define a map $\hat{\tau}_k : C_k(B\Gamma;R) \rightarrow C_k(BG;R)$ as follows.
If $\sigma^k \in C_k(B\Gamma;R)$ is a singular $k$-simplex, then there are exactly
$n$ singular $k$-simplices $\tilde{\sigma}^k_1, \ldots, \tilde{\sigma}^k_n$ in $BG$
satisfying $\rho_{\ast}(\tilde{\sigma}^k_i) = \sigma^k$.  Define
$$\hat{\tau}_k(\sigma^k) = \tilde{\sigma}^k_1 + \cdots + \tilde{\sigma}^k_n.$$ 

\begin{exercise}
The maps $\hat{\tau}_k$ commute with the boundary operators, so we have a commutative
diagram
$$\begin{CD}
\cdots @>{\partial}>> C_k(\Gamma;R) @>{\partial}>> C_{k-1}(\Gamma;R) @>{\partial}>> \cdots\\
@.                    @VV{\hat{\tau}_k}V           @VV{\hat{\tau}_{k-1}}V             @.\\
\cdots @>{\partial}>> C_k(G;R) @>{\partial}>> C_{k-1}(G;R) @>{\partial}>> \cdots
\end{CD}$$
\end{exercise}

\noindent
By this exercise, the maps $\hat{\tau}_k$ induce maps $\tau_k : \HH_k(\Gamma;R) \rightarrow \HH_k(G;R)$
in homology.  Clearly the maps $\tau_k$ satisfy \eqref{eqn:transfer}, so we are done.
\end{proof}

\ParagraphHeading{Borel stability.}
Lemma \ref{lemma:transfer} implies that $\HH_2(\Mod_g(p);\Q) \cong \Q^m$ for some $m \geq 1$.  Why
should we expect that $m=1$?  There is a very fruitful analogy between the mapping class group
and lattices in Lie groups.  The Borel stability theorem says that for the classical sequences
of arithmetic lattices, passing to finite-index subgroups does not change their rational homology
in a stable range.  For example, denote $\SL_n(\Z)$ by $\Gamma_n$ and define
$$\Gamma_n(p) = \Ker(\SL_n(\Z) \rightarrow \SL_n(\Z/p))$$
for $p \geq 2$.  Here the map $\SL_n(\Z) \rightarrow \SL_n(\Z/p)$ comes from reducing all
the entries in the matrices modulo $p$.  Borel's theorem then takes the following form.

\begin{theorem}[{Borel, \cite{BorelStability}}]
\label{theorem:borelstab}
For $k \geq 1$, there exists some $N_k \geq 1$ such that if $n \geq N_k$ and $p \geq 2$, then
$\HH_k(\Gamma_n(p);\Q) \cong \HH_k(\Gamma_n;\Q)$.
\end{theorem}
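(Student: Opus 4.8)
The plan is to deduce Borel's stability theorem from two inputs that are, in a sense, orthogonal: a \emph{stable} computation of the $\Q$-cohomology of the full $\SL_n(\Z)$ (more precisely, of the relevant arithmetic quotients), and the fact that the ``extra'' cohomology that could in principle appear in a congruence cover is controlled by the finite quotient group $\SL_n(\Z/p)$ acting on it. Concretely, I would work at the level of locally symmetric spaces. Let $X = \SL_n(\R)/\SO(n)$, so that $\Gamma_n \backslash X$ and $\Gamma_n(p) \backslash X$ are $K(\Gamma_n,1)$ and $K(\Gamma_n(p),1)$ respectively (after passing to a torsion-free finite-index subgroup if one wants honest manifolds, which only helps and does not affect $\Q$-coefficients by Lemma \ref{lemma:transfer}). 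The finite group $Q = \SL_n(\Z/p) = \Gamma_n/\Gamma_n(p)$ acts on $\Gamma_n(p)\backslash X$ with quotient $\Gamma_n \backslash X$, so by the standard transfer argument for finite group actions one has, rationally,
$$
\HH^k(\Gamma_n(p);\Q)^{Q} \;\cong\; \HH^k(\Gamma_n;\Q).
$$
Thus it suffices to show that, in a stable range of degrees, the $Q$-action on $\HH^k(\Gamma_n(p);\Q)$ is \emph{trivial}, i.e.\ that there is no nontrivial $Q$-subrepresentation.

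The heart of the matter is therefore the following: in low degrees relative to $n$, every class in $\HH^k(\Gamma_n(p);\Q)$ is pulled back from $\HH^k(\Gamma_n;\Q)$. To get at this I would use the comparison with continuous (relative Lie algebra) cohomology, which is exactly Borel's original strategy. One produces a map
$$
j \colon \HH^{\ast}_{\mathrm{cts}}(\SL_n(\R);\Q) \longrightarrow \HH^{\ast}(\Gamma_n(p);\Q)
$$
— equivalently, a map from the cohomology of the compact dual symmetric space $\SU(n)/\SO(n)$ — and Borel's main theorem (the computation that makes everything go) is that $j$ is an \emph{isomorphism in a range} $k \le c(n)$ with $c(n) \to \infty$, and moreover that this range and the target are \emph{independent of the level} $p$: the continuous cohomology side does not see $p$ at all. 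Granting that, the source $\HH^{\ast}_{\mathrm{cts}}(\SL_n(\R);\Q)$ carries a trivial $Q$-action (the $Q$-action on the symmetric space side is through a group acting trivially on continuous cohomology, since continuous cohomology is computed from $G$-invariant forms and $Q$ comes from the same group $G$), so in the stable range $\HH^k(\Gamma_n(p);\Q)$ is a \emph{trivial} $Q$-module. Combining with the transfer isomorphism above gives $\HH^k(\Gamma_n(p);\Q) = \HH^k(\Gamma_n(p);\Q)^{Q} \cong \HH^k(\Gamma_n;\Q)$ for $k \le c(n)$. Setting $N_k$ to be the least $n$ with $c(n) \ge k$ yields the statement.

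The main obstacle — and the genuinely deep part — is precisely the assertion that $j$ is an isomorphism in a range independent of $p$. This is Borel's theorem on the stable real cohomology of arithmetic groups, and proving it requires the vanishing of certain $(\mathfrak g, K)$-cohomology groups for nontempered unitary representations together with a spectral-sequence / Eisenstein-series analysis of the cuspidal and Eisenstein contributions to $L^2$-cohomology; establishing the \emph{uniformity in $p$} uses that these representation-theoretic vanishing ranges depend only on the real group $\SL_n(\R)$ and not on the arithmetic data. Once that black box is in place, the rest of the argument — the finite-group transfer, the identification of the $Q$-action as trivial on continuous cohomology, and the conclusion — is formal.

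Everything above is routine modulo that one input, so I would present the finite-transfer reduction carefully and then invoke Borel's stable cohomology computation, with the uniformity in $p$ highlighted as the crucial point; the matrix-level or arithmetic details of the Eisenstein analysis would be cited rather than reproduced, in keeping with the expository style of these lectures.
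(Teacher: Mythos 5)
The paper itself offers no proof of this statement: it is quoted from Borel \cite{BorelStability} and used as a black box, so there is no internal argument to measure yours against. Your sketch correctly identifies the architecture of Borel's own proof, but as a proof it is essentially deferred rather than given: the black box you invoke --- that the map $\HH^{\ast}_{\mathrm{cts}}(\SL_n(\R);\R) \rightarrow \HH^{\ast}(\Gamma;\R)$ is an isomorphism in a range of degrees depending only on $n$ and not on the arithmetic subgroup $\Gamma$ --- is already the entire content of the theorem. Once that is granted, the statement follows at once by applying it to $\Gamma_n$ and to $\Gamma_n(p)$ and using compatibility of these maps with restriction; your transfer/$Q$-invariants reduction is then superfluous, and conversely it buys nothing if the black box is withheld, since proving that $\SL_n(\Z/p)$ acts trivially on $\HH^k(\Gamma_n(p);\Q)$ in a stable range is not easier than proving level-independence itself. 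So the structure ``formal reduction plus citation of Borel's stable computation'' is fine as exposition --- and is consistent with how the paper treats the result, namely as a citation --- but you should be clear that the representation-theoretic core (invariant forms on the compact dual $\SU(n)/\mathrm{SO}(n)$, vanishing ranges for $(\mathfrak{g},K)$-cohomology, uniformity in the level) is where the theorem lives, not in the transfer formalism.

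Two minor points worth fixing if you keep this write-up: continuous cohomology is naturally a real vector space (invariant forms), so the comparison is with $\R$-coefficients, and one passes to $\Q$ only at the level of dimensions; and the theorem as stated concerns homology with $\Q$-coefficients, which matches your cohomological argument only after invoking universal coefficients over a field --- harmless, but it should be said. Note also that an interesting alternative route in the spirit of these lectures (and of Theorem \ref{theorem:charney}) is to combine Charney's result that $\Gamma_n$ acts trivially on the image of $\HH_k(\Gamma_{n-1}(p);\Q)$ with the homological stability machine, as the paper remarks after Theorem \ref{theorem:charney}; that approach avoids continuous cohomology entirely, at the price of the stability apparatus.
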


\noindent
One can view Theorem \ref{theorem:h2mod} as an analogue (for $k=2$) of Theorem \ref{theorem:borelstab} for
the mapping class group.  As we mentioned, Hain \cite{HainTorelli} proved a similar theorem
for $k=1$.  It would be very interesting to extend this to $k \geq 3$.

\ParagraphHeading{Reduction to stability.}
We now discuss the proof of Theorem \ref{theorem:h2mod}.  In \cite{HarerStability}, Harer
proved that the homology groups of $\Mod_g$ satisfy homological stability.  See \cite{IvanovStability}
or \cite{WahlStability} for more readable proofs of this.  One special
case of Harer's theorem is as follows.  Choose a subsurface $S$ of $\Sigma_g$ such that
$S \cong \Sigma_{g-1,1}$ (see Figure \ref{figure:stability}).  Extending mapping classes
on $S$ by the identity to $\Sigma_g$, we get a homomorphism $\Mod_{g-1,1} \rightarrow \Mod_g$.  Harer
proved that for each $k$, there exists some $N_k$ such that if $g \geq N_k$, then the
induced map $\HH_k(\Mod_{g-1,1};\Z) \rightarrow \HH_k(\Mod_g;\Z)$ is an isomorphism.

\Figure{figure:stability}{Stability}{$S$ is a subsurface of $\Sigma_g$ satisfying $S \cong \Sigma_{g-1,1}$
and $\gamma$ is a simple closed nonseparating curve contained in $\Sigma_g \setminus S$}

The following result asserts that something similar happens for $\Mod_{g}(p)$ for $k=2$.

\begin{lemma}
\label{lemma:stability1}
For $g \geq 5$ and $p \geq 2$, let $i : \Mod_{g-1,1}(p) \rightarrow \Mod_g(p)$ be the restriction
of the map $\Mod_{g-1,1} \rightarrow \Mod_g$ described above.
Then the induced map $i_{\ast} : \HH_2(\Mod_{g-1,1}(p);\Q) \rightarrow \HH_2(\Mod_g(p);\Q)$
is surjective.
\end{lemma}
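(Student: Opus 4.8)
The plan is to derive Lemma \ref{lemma:stability1} from the action of $\Mod_g(p)$ on a highly connected complex of curves, using the associated equivariant homology spectral sequence. First I would fix such a complex $X$: the natural choice is the complex $\CNosep(\Sigma_g)$ of nonseparating curves (or, more likely, a level-$p$ refinement of it), whose vertices are isotopy classes of nonseparating simple closed curves on $\Sigma_g$ and whose $k$-simplices are $(k+1)$-element collections that can be realized disjointly and whose union is nonseparating. By Harer-type connectivity theorems \cite{HarerStability}, this complex is $2$-connected once $g \geq 5$; the genus bounds in these connectivity statements, here and in the auxiliary results below, are what pin the hypothesis to $g \geq 5$. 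After barycentrically subdividing $X$ so that simplex stabilizers fix their vertices, the equivariant homology spectral sequence of the action has the form
$$E^1_{p,q} = \bigoplus_{\sigma \in X_p/\Mod_g(p)} \HH_q(\mathrm{Stab}_{\Mod_g(p)}(\sigma);\Q) \Longrightarrow \HH_{p+q}(\Mod_g(p);\Q)$$
and computes $\HH_{p+q}(\Mod_g(p);\Q)$ in total degree $\leq 2$, since $X$ is $2$-connected.

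Reading off the filtration, $\HH_2(\Mod_g(p);\Q)$ has associated graded pieces $E^\infty_{0,2}$, $E^\infty_{1,1}$, $E^\infty_{2,0}$. Granting for the moment that the last two vanish (see below), we get $\HH_2(\Mod_g(p);\Q) = E^\infty_{0,2}$, a quotient of $E^1_{0,2} = \bigoplus_{[\gamma]} \HH_2(\mathrm{Stab}_{\Mod_g(p)}(\gamma);\Q)$, the sum running over the finitely many vertex orbits. So $\HH_2(\Mod_g(p);\Q)$ is spanned by the images of the groups $\HH_2(\mathrm{Stab}_{\Mod_g(p)}(\gamma);\Q)$, and it remains to see that each such image lies in $\Image(i_\ast)$. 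For a curve $\gamma$ positioned as in Figure \ref{figure:stability}, the mapping classes supported on the complementary subsurface $S \cong \Sigma_{g-1,1}$ that lie in $\Mod_g(p)$ form a copy of $\Mod_{g-1,1}(p)$ inside $\mathrm{Stab}_{\Mod_g(p)}(\gamma)$, whose composition into $\Mod_g(p)$ is the map $i$ (for the orbit used to define $i$) or a $\Mod_g$-conjugate of $i$ (for the other orbits). Hence it is enough to know that $\HH_2(\Mod_{g-1,1}(p);\Q) \to \HH_2(\mathrm{Stab}_{\Mod_g(p)}(\gamma);\Q)$ is surjective. This is a lower-genus, same-degree statement that I would establish separately by unwinding $\mathrm{Stab}_{\Mod_g(p)}(\gamma)$: cutting along $\gamma$ exhibits it as an extension built from $\Mod_{g-1,2}(p)$, and the Birman exact sequence of Lecture~1 then lets one descend from $\Mod_{g-1,2}(p)$ to $\Mod_{g-1,1}(p)$, the disc-pushing kernels contributing nothing new rationally in this range because they are assembled from surface groups (one uses transfer arguments as in Lemma \ref{lemma:transfer} and the computation of $\HH_1$).

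It remains to handle the error terms $E^\infty_{1,1}$ and $E^\infty_{2,0}$. These are subquotients of $E^1_{1,1} = \bigoplus_{[e]} \HH_1(\mathrm{Stab}(e);\Q)$ and $E^1_{2,0} = \bigoplus_{[\sigma^2]} \Q$, i.e.\ they only involve $\HH_{\leq 1}$ of edge and $2$-simplex stabilizers, together with the bottom row $E^1_{\ast,0}$, which computes $\HH_\ast(X/\Mod_g(p);\Q)$. To kill $E^\infty_{2,0}$ I would use that the quotient $X/\Mod_g(p)$ is simply connected (a statement in the spirit of the machinery for extracting presentations from group actions on complexes), forcing $E^2_{1,0} = E^2_{2,0} = 0$ in low degrees. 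The term $E^\infty_{1,1}$ --- equivalently, the degree-$1$ homology of the quotient complex with coefficients in the local system $[\sigma] \mapsto \HH_1(\mathrm{Stab}(\sigma);\Q)$ --- is the delicate one; here I would feed in the computation of $\HH_1$ of level subgroups of mapping class groups from Lecture~3 (Theorem \ref{theorem:modpabel}), reduced via Birman exact sequences and subsurface inclusions to the stabilizers appearing above.

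The hard part is that $\Mod_g(p)$, unlike $\Mod_g$, does \emph{not} act transitively on nonseparating curves: a curve $\gamma$ carries a well-defined class in $\HH_1(\Sigma_g;\Z/p)$ up to sign, so the spectral sequence genuinely has many orbit-summands. One must therefore (a) prove the vanishing, or liftability, of the error terms $E^\infty_{1,1}$ and $E^\infty_{2,0}$ uniformly over all orbits, and (b) check that the images of the $\HH_2(\mathrm{Stab}(\gamma);\Q)$ for the various orbits all land inside the image of the \emph{single} fixed stabilization map $i$ --- which requires controlling the $\Sp_{2g}(\Z/p)$-action on $\HH_2(\Mod_g(p);\Q)$ induced by conjugation, and establishing the needed connectivity for the level-$p$ variants of the curve complexes. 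This quantitative connectivity analysis together with the orbit bookkeeping is the technical core of \cite{PutmanSecondHomologyLevel}, which I would invoke for those inputs rather than reproduce them.
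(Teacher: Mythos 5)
Your route is workable in outline, but it is not the paper's proof of this lemma, and it is much heavier than what the paper does. In the lecture, Lemma \ref{lemma:stability1} is deduced in a few lines from Theorem \ref{theorem:h2mod}: one writes down the commutative square comparing the stabilization maps for $\Mod(p)$ and $\Mod$, notes that $\HH_2(\Mod_{g-1,1}(p);\Q) \rightarrow \HH_2(\Mod_{g-1,1};\Q)$ is surjective by the transfer map (Lemma \ref{lemma:transfer}), that $\HH_2(\Mod_{g-1,1};\Q) \rightarrow \HH_2(\Mod_g;\Q)$ is an isomorphism by Harer's stability theorem for $g \geq 5$, and that the right-hand vertical map is an isomorphism because both groups are $\Q$. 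What you have written instead is, in effect, a sketch of the proof of Theorem \ref{theorem:h2mod} itself: your spectral-sequence step is the paper's Lemma \ref{lemma:weakstability}, and your ``orbit bookkeeping'' step (b) is exactly the comparison of the images coming from different $\Mod_g(p)$-orbits of curves, i.e.\ Lemma \ref{lemma:trivialaction}, which is the genuinely hard part and which the lecture, like you, defers to \cite{PutmanSecondHomologyLevel}. So both arguments ultimately rest on the same reference; yours just re-derives the skeleton of that paper rather than quoting its conclusion, and it additionally targets $\Mod_{g-1,1}(p)$ rather than the curve stabilizer, which the paper's own reduction (Lemma \ref{lemma:stability2}) deliberately avoids.

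That extra step is where your justification is too quick. You assert that $\HH_2(\Mod_{g-1,1}(p);\Q) \rightarrow \HH_2((\Mod_g(p))_{\gamma};\Q)$ is surjective because the Birman-type kernels are ``assembled from surface groups'' and ``contribute nothing new rationally.'' This is false at the level of the kernels themselves: intersecting a disc-pushing kernel (a surface-group-like group) with a level subgroup gives a finite-index subgroup whose rational homology is strictly larger, and the lecture explicitly warns that this is why the proof of Lemma \ref{lemma:trivialaction} requires ``rather intricate twisted group cohomology computations'' rather than a transfer argument. Similarly, your appeals to vanishing of $E^{\infty}_{1,1}$ and $E^{\infty}_{2,0}$ and to simple connectivity of the quotient complex are precisely the points where the naive stability machine breaks for $\Mod_g(p)$ (it would otherwise prove integral stability, contradicting Theorem \ref{theorem:modpabel}), and your use of Lecture 3 to control edge-stabilizer $\HH_1$ would in any case only cover odd $p$, while the lemma is asserted for all $p \geq 2$. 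None of this makes your strategy wrong, since you ultimately invoke \cite{PutmanSecondHomologyLevel} for these inputs, but you should either do that invocation wholesale (at which point the short transfer-plus-Harer deduction from Theorem \ref{theorem:h2mod} is cleaner) or be clear that the steps you describe as routine are the actual content of that paper.
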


\noindent
Observe that Theorem \ref{theorem:h2mod} implies Lemma \ref{lemma:stability1}.  Indeed, if
Theorem \ref{theorem:h2mod} is true, then we have a commutative diagram of the form
$$\begin{CD}
\HH_2(\Mod_{g-1,1}(p);\Q) @>>> \HH_2(\Mod_g(p);\Q)\\
@VVV                           @VV{\cong}V \\
\HH_2(\Mod_{g-1,1};\Q)    @>>> \HH_2(\Mod_g;\Q)
\end{CD}$$
By Lemma \ref{lemma:transfer}, the map $\HH_2(\Mod_{g-1,1}(p);\Q) \rightarrow \HH_2(\Mod_{g-1,1};\Q)$
is a surjection, and Harer's stability theorem say that the map 
$\HH_2(\Mod_{g-1,1};\Q) \rightarrow \HH_2(\Mod_g;\Q)$ is an isomorphism as long as $g \geq 5$.  We
conclude that the map $\HH_2(\Mod_{g-1,1}(p);\Q) \rightarrow \HH_2(\Mod_g(p);\Q)$ is a surjection,
as desired.

Somewhat surprisingly, Lemma \ref{lemma:stability1} also implies Theorem \ref{theorem:h2mod}.

\begin{lemma}
\label{lemma:stabilityimpliesiso}
Lemma \ref{lemma:stability1} implies Theorem \ref{theorem:h2mod}.
\end{lemma}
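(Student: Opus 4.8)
The plan is to run an induction on the genus $g$, using Lemma \ref{lemma:stability1} as the inductive step and Harer's computation $\HH_2(\Mod_g;\Q) \cong \Q$ (valid for $g \geq 4$) together with a base case to start the induction. The key point is that Lemma \ref{lemma:transfer} always gives a surjection $\HH_2(\Mod_g(p);\Q) \twoheadrightarrow \HH_2(\Mod_g;\Q) \cong \Q$, so it suffices to bound $\dim_\Q \HH_2(\Mod_g(p);\Q) \leq 1$ from above. Lemma \ref{lemma:stability1} feeds the surjection $\HH_2(\Mod_{g-1,1}(p);\Q) \twoheadrightarrow \HH_2(\Mod_g(p);\Q)$, so the whole problem reduces to controlling the homology of the bounded mapping class group $\Mod_{g,1}(p)$, and in particular to comparing $\HH_2(\Mod_{g-1,1}(p);\Q)$ with $\HH_2(\Mod_{g-1}(p);\Q)$.

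First I would deal with the passage between the closed and the one-boundary-component versions. The Birman exact sequence $1 \to \pi_1(U\Sigma_{g-1}) \to \Mod_{g-1,1} \to \Mod_{g-1} \to 1$ restricts (since the disc-pushing subgroup lies in $\Torelli_{g-1,1} \subset \Mod_{g-1,1}(p)$) to a short exact sequence $1 \to \pi_1(U\Sigma_{g-1}) \to \Mod_{g-1,1}(p) \to \Mod_{g-1}(p) \to 1$. Running the associated five-term exact sequence in $\Q$-homology, and using that $\HH_1(\pi_1(U\Sigma_{g-1});\Q) \cong \Q^{2(g-1)}$ together with the action of $\Mod_{g-1}(p)$ on it, one controls the difference between $\HH_2(\Mod_{g-1,1}(p);\Q)$ and $\HH_2(\Mod_{g-1}(p);\Q)$ up to contributions from the surface-group factor. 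Next I would stack the surjections coming from Lemma \ref{lemma:stability1}: iterating $\HH_2(\Mod_{g-1,1}(p);\Q) \twoheadrightarrow \HH_2(\Mod_g(p);\Q)$ and interleaving with the Birman sequence comparison lets one push everything down to a fixed small genus, where $\HH_2(\Mod_{g_0}(p);\Q)$ can be computed (or at least bounded) directly — this is the base case of the induction.

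The main obstacle is the base case and, relatedly, making sure the Birman five-term exact sequence argument actually closes up without leaking extra dimensions: the term $\HH_0(\Mod_{g-1}(p);\HH_1(\pi_1(U\Sigma_{g-1});\Q))$ of coinvariants, and the $\HH_2$ of the surface-bundle fibre, must be shown not to contribute to $\HH_2$ of the total group beyond what is already accounted for. Here one uses that $\Mod_{g-1}(p)$ has finite index in $\Mod_{g-1}$, so its action on $\HH_1(U\Sigma_{g-1};\Q) \cong \HH_1(\Sigma_{g-1};\Q) \cong \HHH \otimes \Q$ has no nonzero coinvariants for $g-1 \geq 2$ (the symplectic representation has no trivial subquotient over $\Q$ once $g-1 \geq 2$, even after restricting to a finite-index subgroup, because $\Sp_{2(g-1)}(\Z,p)$ is Zariski-dense in $\Sp_{2(g-1)}$). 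Combined with the surjectivity of the transfer map $\HH_2(\Mod_g(p);\Q) \twoheadrightarrow \HH_2(\Mod_g;\Q)$, this pins the dimension to exactly $1$ and gives Theorem \ref{theorem:h2mod}.
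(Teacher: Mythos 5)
Your plan does not close up, and it misses the one idea the implication actually turns on. The paper's proof is short: by Exercise \ref{exercise:supertransfer}, $\HH_2(\Mod_g;\Q) \cong \bigl(\HH_2(\Mod_g(p);\Q)\bigr)_{\Mod_g}$, so since $\HH_2(\Mod_g;\Q)\cong \Q$ it suffices to show that $\Mod_g$ acts trivially on $\HH_2(\Mod_g(p);\Q)$; this is checked on the generators $T_\gamma$ ($\gamma$ nonseparating), because $T_\gamma$ commutes with the image of $\Mod_{g-1,1}(p)$ for a subsurface $S\cong\Sigma_{g-1,1}$ disjoint from $\gamma$, and Lemma \ref{lemma:stability1} says that image already surjects onto $\HH_2(\Mod_g(p);\Q)$. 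You do invoke the transfer, but only for the lower bound $\dim \HH_2(\Mod_g(p);\Q)\geq 1$; you never use the coinvariants identification, which is what converts the surjectivity statement of Lemma \ref{lemma:stability1} into the upper bound.

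Your substitute for the upper bound --- induction on genus --- has two genuine gaps. First, Lemma \ref{lemma:stability1} gives only a surjection $\HH_2(\Mod_{g-1,1}(p);\Q)\twoheadrightarrow \HH_2(\Mod_g(p);\Q)$; to make this useful you must bound $\HH_2(\Mod_{g-1,1}(p);\Q)$, and your Birman-exact-sequence comparison does not do so. In the Hochschild--Serre spectral sequence of $1\to\pi_1(U\Sigma_{g-1})\to\Mod_{g-1,1}(p)\to\Mod_{g-1}(p)\to 1$, the coinvariants you compute kill only the $E^2_{0,2}$ column; the term $E^2_{1,1}=\HH_1\bigl(\Mod_{g-1}(p);\HH_1(\Sigma_{g-1};\Q)\bigr)$ remains, and its vanishing is a nontrivial twisted-coefficient computation that nothing in these notes supplies (the paper explicitly flags such twisted computations as the hard part of \cite{PutmanSecondHomologyLevel}, not something that follows from Zariski density of $\Sp_{2g}(\Z,p)$). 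Second, there is no base case: Lemma \ref{lemma:stability1} applies only for $g\geq 5$, so your induction bottoms out at a genus where $\HH_2(\Mod_{g_0}(p);\Q)$ (or $\HH_2(\Mod_{g_0,1}(p);\Q)$) must be computed directly, and no such computation is available --- indeed Theorem \ref{theorem:h2mod} is not even asserted there. This is the usual feature of stability arguments: they show the groups agree with a stable value but cannot identify it without a comparison, and here the comparison has to be with $\HH_2(\Mod_g;\Q)$ via the coinvariants statement, exactly the step your proposal omits.
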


\noindent
Before proving Lemma \ref{lemma:stabilityimpliesiso}, we need
some more abstract nonsense.  Let $G$ be a finite-index normal subgroup of $\Gamma$.
The conjugation action of $\Gamma$ on $G$ induces an action of $\Gamma$ on $\HH_k(G;\Q)$.  Recall
that if $M$ is a vector space upon which $\Gamma$ acts, then the {\em coinvariants} of the action, 
denoted $M_{\Gamma}$, is the quotient $M/I$ with $I$ the subspace spanned by
$\Set{$x-g(x)$}{$x \in M$, $g \in \Gamma$}$.  In other words, $M_{\Gamma}$ is the largest
quotient of $M$ upon which $\Gamma$ acts trivially.  Since $\Gamma$ acts trivially on
its own homology groups $\HH_k(\Gamma;\Q)$, the map $\HH_k(G;\Q) \rightarrow \HH_k(\Gamma;\Q)$
factors through $(\HH_k(G;\Q))_{\Gamma}$.  We then have the following exercise.

\begin{exercise}
\label{exercise:supertransfer}
$\HH_k(\Gamma;\Q) \cong (\HH_k(G;\Q))_{\Gamma}$.  Hint : carefully study the proof
of Lemma \ref{lemma:transfer}.  Alternatively, this can be proven using the Hochschild-Serre
spectral sequence of the extension
$$1 \longrightarrow G \longrightarrow \Gamma \longrightarrow \Gamma/G \longrightarrow 1.$$
\end{exercise}

\begin{proof}[{Proof of Lemma \ref{lemma:stabilityimpliesiso}}]
Assume that Lemma \ref{lemma:stability1} holds.  Since $\HH_2(\Mod_g;\Q) \cong \Q$, to
prove that $\HH_2(\Mod_g(p);\Q) \cong \Q$, it is enough to prove that
$\HH_2(\Mod_g(p);\Q) \cong \HH_2(\Mod_g;\Q)$.  By Exercise \ref{exercise:supertransfer},
this is equivalent to showing that the action of $\Mod_g$ on $\HH_2(\Mod_g(p);\Q)$ is
trivial.  This can be checked on a generating set.  The group $\Mod_g$ is generated by the set
of Dehn twists about nonseparating simple closed curves.  Consider such a Dehn twist $T_{\gamma}$.
As is shown in Figure \ref{figure:stability}, we can find a subsurface $S$ of $\Sigma_g$ such
that $S \cong \Sigma_{g-1,1}$ and such that $\gamma \subset \Sigma_g \setminus S$.  Let
$i : \Mod_{g-1,1}(p) \rightarrow \Mod_g(p)$ be the map induced by the subsurface inclusion
$S \hookrightarrow \Sigma_g$.  Clearly $T_{\gamma}$ commutes with $\Image(i)$, so $T_{\gamma}$
acts trivially on $i_{\ast}(\HH_2(\Mod_{g-1,1}(p);\Q))$.  Lemma \ref{lemma:stability1} implies
that $i_{\ast}(\HH_2(\Mod_{g-1,1}(p);\Q)) = \HH_2(\Mod_g(p);\Q)$, so we conclude that
$T_{\gamma}$ acts trivially on $\HH_2(\Mod_g(p);\Q)$, as desired.
\end{proof}

\ParagraphHeading{A weaker result suffices.}
In fact, we do not need the full strength of Lemma \ref{lemma:stability1} to prove
Theorem \ref{theorem:h2mod}.  If $\gamma$ is the isotopy class of a nonseparating simple
closed curve on $\Sigma_g$, then denote by $(\Mod_g(p))_{\gamma}$ the subgroup of
$\Mod_g(p)$ which stabilizes $\gamma$.  Examining the proof of Lemma \ref{lemma:stabilityimpliesiso},
it is clear that the following lemma also implies Theorem \ref{theorem:h2mod}.

\begin{lemma}
\label{lemma:stability2}
For $g \geq 5$ and $p \geq 2$, let $\gamma$ be a nonseparating simple closed curve on
$\Sigma_{g}$.  Then the map $\HH_2((\Mod_g(p))_{\gamma};\Q) \rightarrow \HH_2(\Mod_g(p);\Q)$
is surjective.
\end{lemma}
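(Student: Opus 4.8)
The plan is to let $\Mod_g(p)$ act on a highly connected complex built from nonseparating curves subject to a homological constraint, and to extract the surjectivity from the associated equivariant homology spectral sequence --- much as Harer deduced homological stability for $\Mod_g$ from the high connectivity of the complex of nonseparating curves. The new wrinkle is that $\Mod_g(p)$ does \emph{not} act transitively on the vertices of $\CNosep(\Sigma_g)$: the $\Mod_g(p)$-orbit of a curve $\gamma'$ is constrained by the image of $[\gamma']$ in $\HH_1(\Sigma_g;\Z/p)$. So I would fix the class $\bar{v}$ of $\gamma$ in $\HH_1(\Sigma_g;\Z/p)$ and take $X$ to be the complex built from the nonseparating curves whose homology class modulo $p$ lies in $\{\bar{v},-\bar{v}\}$ (one may need to decorate the curves --- e.g.\ with a tether to a basepoint --- to make the connectivity argument below go through, but I will suppress this). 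Then $\Mod_g(p)$ acts on $X$ with a single orbit of vertices, and the stabilizer of the vertex $\gamma$ equals $(\Mod_g(p))_\gamma$ up to a subgroup of index at most two, which is invisible to rational homology.

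Granting that $X$ is $2$-connected, there is a first-quadrant spectral sequence
$$E^1_{s,t} \;=\; \bigoplus_{\sigma} \HH_t\big((\Mod_g(p))_\sigma;\Q_\sigma\big) \;\Longrightarrow\; \HH_{s+t}(\Mod_g(p);\Q),$$
the sum running over representatives of the $\Mod_g(p)$-orbits of $s$-simplices $\sigma$ of $X$, with $\Q_\sigma$ the orientation module of $\sigma$ (trivial for vertices). To conclude that the edge map $\HH_2\big((\Mod_g(p))_\gamma;\Q\big) \to \HH_2(\Mod_g(p);\Q)$ is surjective it suffices to know that $E^\infty_{1,1} = 0$ and $E^\infty_{2,0} = 0$, for then $\HH_2(\Mod_g(p);\Q) = E^\infty_{0,2}$, a quotient of the single group $E^1_{0,2} = \HH_2\big((\Mod_g(p))_\gamma;\Q\big)$. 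For the first of these I would show that every simplex stabilizer $(\Mod_g(p))_\sigma$ has \emph{finite} abelianization --- and the same for the finite-index subgroups of it arising from the orientation modules --- so that the entire row $E^1_{\ast,1}$ vanishes. This is a congruence-subgroup statement: cutting $\Sigma_g$ along the curves of $\sigma$ identifies $(\Mod_g(p))_\sigma$, up to finite index and a central free abelian group of boundary twists, with a finite-index subgroup of a lower-genus mapping class group $\Mod_{h,b}$ that contains $\Torelli_{h,b}$, and the crossed lantern argument of Lecture~3 together with Theorems~\ref{theorem:modpabel} and~\ref{theorem:sppabel} adapts to show such subgroups have finite abelianization. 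For $E^\infty_{2,0}$ one is reduced, since the $E^1_{\ast,1}$ row is gone, to checking that the bottom row contributes nothing in degree two, i.e.\ that the appropriate $\HH_2$ of the quotient $X/\Mod_g(p)$ (twisted by the orientation modules) vanishes; I would verify this by listing the $\Mod_g(p)$-orbits of edges and $2$-simplices of $X$ via the change-of-coordinates principle. Finally, since $\Mod_g$ acts transitively on nonseparating curves and normalizes $\Mod_g(p)$, conjugating by an element of $\Mod_g$ that carries $\gamma$ to any other nonseparating curve $\gamma'$ induces an automorphism of $\Mod_g(p)$ --- hence an isomorphism on $\HH_2(\Mod_g(p);\Q)$ --- taking $(\Mod_g(p))_\gamma$ to $(\Mod_g(p))_{\gamma'}$ compatibly with the two inclusions, so the surjectivity passes from $\gamma$ to every nonseparating curve, as needed for the application to Theorem~\ref{theorem:h2mod}.

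The main obstacle is the input assumed at the start of the second paragraph: that the constrained complex $X$ is $2$-connected. For the \emph{un}constrained complex $\CNosep(\Sigma_g)$ this (in fact, much more) is Harer's theorem, and one genuinely needs connectivity growing with $g$ --- this is what forces the hypothesis $g \geq 5$. Imposing the homological constraint makes the connectivity argument substantially harder, since the usual surgery and disc-replacement moves (in the style of Hatcher, or of Bestvina--Bux--Margalit) must be carried out while keeping every curve in the prescribed class modulo $p$; decorating curves with tethers is one standard device for gaining enough flexibility. A related point that must also be nailed down is the transitivity claim underlying the setup --- that $\Mod_g(p)$ acts transitively on the isotopy classes of nonseparating curves in a fixed homology class modulo $p$ up to sign --- which one gets from the transitivity of $\Torelli_g$ on nonseparating curves in a fixed primitive integral homology class together with a change-of-coordinates argument. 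Once these geometric facts are in hand, the rest --- the stabilizer abelianization computation and the spectral-sequence bookkeeping --- is routine.
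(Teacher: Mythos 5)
Your route is genuinely different from the paper's, but it has a real gap: the entire argument is supported by the assumption that the homologically constrained complex $X$ (nonseparating curves whose class in $\HH_1(\Sigma_g;\Z/p)$ is $\pm\bar{v}$, possibly tethered) is $2$-connected, and that assumption is neither proved nor available from anything in these notes. Harer's connectivity theorem applies to the full complex $\CNosep_g$; the usual surgery/disc-exchange arguments do not respect a fixed mod-$p$ homology class (a surgered curve typically changes class), so the connectivity of the constrained complex is a hard theorem in its own right, of at least the same order of difficulty as the statement you are trying to prove --- you acknowledge this, but it means the proposal concentrates all the difficulty in an unproven input rather than closing it. A second, smaller soft spot is the claim that $E^{\infty}_{2,0}=0$ is ``routine'': since you plan to kill the whole row $t=1$, this forces the vanishing of the twisted second homology of the quotient $X/\Mod_g(p)$, and with the mod-$p$ constraint the $\Mod_g(p)$-orbits of edges and triangles are governed by nontrivial arithmetic data; this is precisely the kind of orbit bookkeeping that stops the standard stability machine from applying to $\Mod_g(p)$ in the first place (which is why the notes only obtain Lemma \ref{lemma:weakstability}, with a direct sum over \emph{all} vertex orbits). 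The remaining ingredients you invoke --- transitivity of $\Mod_g(p)$ on nonseparating curves in a fixed mod-$p$ class up to sign, and finiteness of the abelianizations of simplex stabilizers via the methods of Lecture 3 --- are plausible and provable, but they do not close the main gap.

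For comparison, the paper keeps the full complex $\CNosep_g$, whose high connectivity is known, and accepts the non-transitive action: the machine then only yields Lemma \ref{lemma:weakstability}. The real content is Lemma \ref{lemma:trivialaction}, that $\Mod_g$ acts trivially on the image of $\HH_2((\Mod_g(p))_{\gamma};\Q)$ in $\HH_2(\Mod_g(p);\Q)$; combined with the change-of-coordinates principle (which conjugates one vertex stabilizer to any other inside $\Mod_g(p)$'s normalizer $\Mod_g$), this identifies all the images in Lemma \ref{lemma:weakstability} and yields Lemma \ref{lemma:stability2}. That trivial-action lemma is where the paper's difficulty lives; it is proved in \cite{PutmanSecondHomologyLevel} using level-$p$ analogues of the Birman exact sequence and twisted cohomology computations, with Charney's argument (Theorem \ref{theorem:charney}) given in the notes as a model. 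So your approach trades the paper's hard step for a different hard step (connectivity of a constrained complex plus vanishing of the quotient's twisted $\HH_2$), and as written that step is missing.
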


\ParagraphHeading{The homological stability machine.}
Homological stability theorems are known for many different sequences of groups, and
there is now a standard procedure for proving them.  This procedure goes back to Quillen.
See \cite{HatcherWahl} for an excellent discussion of how this machine applies
to many different sequences of groups and \cite{WahlStability} for a detailed
exposition of how to apply it to the mapping class group.  The input for
this machine consists of a sequence of highly connected spaces upon which
the groups in question act.  For $\Mod_g$, Harer proved homological stability
by studying the action on the following space.

\begin{definition}
The {\em nonseparating curve complex}, denoted $\CNosep_g$, is the simplicial
complex whose $k$-simplices are sets $\{\gamma_0,\ldots,\gamma_k\}$ of isotopy classes
of simple closed curves on $\Sigma_g$ that can be realized such that the $\gamma_i$
are all disjoint and $\Sigma_g \setminus (\gamma_0 \cup \cdots \cup \gamma_k)$
is connected.
\end{definition}

One's first impulse, of course, is to apply this machine to $\Mod_g(p)$ to try
to prove Lemma \ref{lemma:stability2}.  Unfortunately, it does not quite work.  The
problem is that if it did work, then it would prove that $\HH_k(\Mod_g(p);\Z)$ is stable,
and this is false even for $k=1$ (see Theorem \ref{theorem:modpabel}).  The machine
does, however, give the following result.

\begin{lemma}
\label{lemma:weakstability}
For $g \geq 5$ and $p \geq 2$, the map
$$\bigoplus_{\gamma \in (\CNosep_g)^{(0)}} \HH_2((\Mod_g(p))_{\gamma};\Q) \longrightarrow \HH_2(\Mod_g(p);\Q)$$
is surjective.
\end{lemma}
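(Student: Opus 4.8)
The goal is Lemma \ref{lemma:weakstability}: the map
$$\bigoplus_{\gamma \in (\CNosep_g)^{(0)}} \HH_2((\Mod_g(p))_{\gamma};\Q) \longrightarrow \HH_2(\Mod_g(p);\Q)$$
is surjective for $g \geq 5$ and $p \geq 2$. The natural approach is to feed the action of $\Gamma := \Mod_g(p)$ on the nonseparating curve complex $\CNosep_g$ into the standard spectral-sequence machine for homological stability, being careful to extract only the weak surjectivity statement above (which does not depend on stabilizing the homology of $\Gamma$ itself). First I would recall that $\CNosep_g$ is highly connected: Harer proved it is $(g-2)$-connected, so for $g \geq 5$ it is at least $2$-connected, hence in particular simply connected and with $\widetilde{\HH}_i = 0$ for $i \leq 2$. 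The group $\Gamma$ acts simplicially on $\CNosep_g$ without inversions (after passing to a barycentric subdivision if necessary, though for curve complexes this is unnecessary), and I would use the equivariant homology spectral sequence associated to this action.

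**The spectral sequence.** Concretely, let $X = \CNosep_g$ and consider the spectral sequence of the action of $\Gamma$ on $X$ with $\Q$ coefficients:
$$E^1_{s,t} = \bigoplus_{\sigma \in \Sigma_s} \HH_t(\Gamma_\sigma;\Q) \Longrightarrow \HH_{s+t}^{\Gamma}(X;\Q),$$
where $\Sigma_s$ is a set of representatives for the $\Gamma$-orbits of $s$-simplices of $X$ and $\Gamma_\sigma$ is the stabilizer of $\sigma$. Since $X$ is $2$-connected, its $\Gamma$-equivariant homology agrees with that of a point in the range we care about: $\HH_n^{\Gamma}(X;\Q) \cong \HH_n(\Gamma;\Q)$ for $n \leq 2$ (one can see this from the isotropy spectral sequence for the contractible-up-to-degree-$2$ complex, or from the Borel construction $E\Gamma \times_\Gamma X$). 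In particular the spectral sequence converges to $\HH_2(\Gamma;\Q)$ in total degree $2$. Now I run the standard argument: the filtration of $\HH_2(\Gamma;\Q)$ has graded pieces which are subquotients of $E^\infty_{0,2}$, $E^\infty_{1,1}$, and $E^\infty_{2,0}$. The piece $E^\infty_{0,2}$ is a quotient of $E^1_{0,2} = \bigoplus_{\text{vertices }v} \HH_2(\Gamma_v;\Q)$, which up to $\Gamma$-action reindexing is exactly $\bigoplus_{\gamma \in (\CNosep_g)^{(0)}} \HH_2((\Mod_g(p))_\gamma;\Q)$; the edge homomorphism $E^\infty_{0,2} \to \HH_2(\Gamma;\Q)$ (composed with $E^1_{0,2} \twoheadrightarrow E^\infty_{0,2}$) is the map in the statement. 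So it suffices to show that $E^\infty_{1,1} = 0$ and $E^\infty_{2,0} = 0$, which forces $\HH_2(\Gamma;\Q) = E^\infty_{0,2}$ and gives the surjectivity.

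**Killing the other graded pieces.** For $E^\infty_{2,0}$: the bottom row is $E^1_{s,0} = \bigoplus_{\sigma \in \Sigma_s} \Q = C_s(X/\Gamma;\Q)$, so $E^2_{s,0} = \HH_s(X/\Gamma;\Q)$. I need $\HH_2(X/\Gamma;\Q) = 0$ and $\HH_1(X/\Gamma;\Q) = 0$. These are statements about the connectivity of the quotient complex $X/\Gamma$; they follow from the high connectivity of $X$ together with the fact that simplex stabilizers act with suitable transitivity on the relevant links — this is the standard "the quotient of a highly connected complex by a group acting nicely is again highly connected" input, and for the curve complex the needed transitivity statements (that $\Mod_g$, and enough of $\Mod_g(p)$, acts transitively on ordered tuples of curves of a fixed topological type) are classical. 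For $E^\infty_{1,1}$: here $E^1_{1,1} = \bigoplus_{e \in \Sigma_1} \HH_1(\Gamma_e;\Q)$, and $E^\infty_{1,1}$ is a subquotient. I would kill this by showing the $d^1$ differential $E^1_{2,1} \to E^1_{1,1}$ is surjective onto the kernel of $E^1_{1,1} \to E^1_{0,1}$, or more robustly by the "injectivity/surjectivity of stabilizer maps" argument: compare the stabilizer of an edge $\{\gamma_0,\gamma_1\}$ with the stabilizer of its vertex $\gamma_0$, use that the former maps to the latter with the relevant relative connectivity, and conclude the columns assemble correctly. Concretely the cleanest route is Hatcher–Wahl style: verify that $(\CNosep_g, \Mod_g(p))$ satisfies the axioms of a suitable "complex of $[\text{nonseparating curves}]$" with the connectivity bound $(g-2)/2$ or better, and quote their Theorem that yields exactly the surjectivity of $\bigoplus_v \HH_t(\text{stab}) \to \HH_t(\Gamma)$ in degree $t$ once $2t+2 \leq g$ (which holds for $t = 2$, $g \geq 5$ with room to spare).

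**Main obstacle.** The genuine work is not the spectral sequence formalism — that is routine — but verifying that $\CNosep_g$ is sufficiently connected \emph{and} that the action of the smaller group $\Mod_g(p)$, rather than the full $\Mod_g$, still acts with enough transitivity on simplices and their links for the quotient $\CNosep_g/\Mod_g(p)$ to inherit the connectivity bounds. The subtlety is that $\Mod_g(p)$ does \emph{not} act transitively on vertices of $\CNosep_g$ (the orbits are indexed by $\HH_1(\Sigma_g;\Z/p)$-data), so the bottom row $E^1_{*,0}$ is the chain complex of a genuinely nontrivial quotient complex, and one must check $\HH_1$ and $\HH_2$ of \emph{that} complex vanish. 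I expect this to be handled exactly as in Harer's and Putman's treatments of relative curve complexes: the quotient $\CNosep_g/\Mod_g(p)$ is the "complex of tuples of curves with mod-$p$ homology labels," and one proves its high connectivity by a bad-simplex / Hatcher-flow argument, or by noting it is a join-like build-up over the labeling data. Once that connectivity input is in hand, the spectral-sequence bookkeeping above delivers the lemma.
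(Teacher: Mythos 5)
Your proposal follows essentially the same route as the paper, which obtains this lemma not by a from-scratch argument but by feeding the action of $\Mod_g(p)$ on the highly connected complex $\CNosep_g$ into the standard equivariant spectral sequence machinery for homological stability (pointing to \cite{HatcherWahl,WahlStability} for the machine and to \cite{PutmanSecondHomologyLevel} for the details). Your reduction to killing $E^{\infty}_{1,1}$ and $E^{\infty}_{2,0}$, and your observation that the real work lies in handling the failure of vertex-transitivity of the $\Mod_g(p)$-action (so that the bottom row is the chain complex of a nontrivial quotient), is precisely the content the paper also defers to the references, so your outline matches the paper's treatment.
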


\ParagraphHeading{Rescuing the machine.}
To deduce Lemma \ref{lemma:stability2} from Lemma \ref{lemma:weakstability}, we need to prove
that if $g \geq 5$ and $p \geq 2$, then for any two nonseparating simple closed curves $\gamma$ and
$\gamma'$ on $\Sigma_g$, the images of $\HH_2((\Mod_g(p))_{\gamma};\Q)$ and
$\HH_2((\Mod_g(p))_{\gamma'};\Q)$ in $\HH_2(\Mod_g(p);\Q)$ are the same.  A standard
result about surfaces called the ``change of coordinates principle'' (see \cite{FarbMargalitPrimer})
says that there exists some $\phi \in \Mod_g$ such that $\phi(\gamma) = \gamma'$.  This implies
that
$$\phi (\Mod_g(p))_{\gamma} \phi^{-1} = (\Mod_g(p))_{\gamma'}.$$
Thus the action of $\Mod_g$ on $\HH_2(\Mod_g(p);\Q)$ takes the image
of $\HH_2((\Mod_g(p))_{\gamma};\Q)$ to the image of $\HH_2((\Mod_g(p))_{\gamma'};\Q)$.  We conclude
that Lemma \ref{lemma:weakstability} would following from the following lemma.

\begin{lemma}
\label{lemma:trivialaction}
For $g \geq 5$ and $p \geq 2$, if $\gamma$ is a nonseparating simple closed curve on $\Sigma_g$,
then $\Mod_g$ acts trivially on the image of $\HH_2((\Mod_g(p))_{\gamma};\Q)$ in
$\HH_2(\Mod_g(p);\Q)$.
\end{lemma}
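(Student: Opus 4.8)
The plan is to study the action of $\Mod_g$ on $\HH_2(\Mod_g(p);\Q)$ directly. Since $\Mod_g(p)$ acts trivially on its own homology, this action factors through the finite group $\Mod_g/\Mod_g(p)\cong\Sp_{2g}(\Z/p)$. Write $V_\gamma$ for the image of $\HH_2((\Mod_g(p))_\gamma;\Q)$ in $\HH_2(\Mod_g(p);\Q)$, and let $Z$ be the set of $\sigma\in\Mod_g$ that fix $V_\gamma$ pointwise. One checks at once that $Z$ is a subgroup of $\Mod_g$ containing $\Mod_g(p)$, so it suffices to prove that the image of $Z$ in $\Sp_{2g}(\Z/p)$ is all of $\Sp_{2g}(\Z/p)$; this forces $Z=\Mod_g$, which is the assertion of Lemma \ref{lemma:trivialaction}.

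To feed elements into $Z$ I would use a subsurface--centraliser trick. Fix a simple closed curve $\delta$ with $i_g(\gamma,\delta)=1$, let $N$ be a regular neighbourhood of $\gamma\cup\delta$ (so $N\cong\Sigma_{1,1}$), and let $\Lambda_\delta<\Mod_g(p)$ be the image of the level $p$ mapping class group of the complementary subsurface $\Sigma_g\setminus N\cong\Sigma_{g-1,1}$; since everything supported in $\Sigma_g\setminus N$ is disjoint from $\gamma$, we have $\Lambda_\delta<(\Mod_g(p))_\gamma$. Any mapping class $c$ of $\Sigma_g$ supported in $N$ commutes with $\Lambda_\delta$, so conjugation by $c$ acts as the identity on $\HH_2(\Lambda_\delta;\Q)$, and therefore $c$ fixes pointwise the image of $\HH_2(\Lambda_\delta;\Q)$ in $\HH_2(\Mod_g(p);\Q)$. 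Moreover the mapping classes supported in $N\cong\Sigma_{1,1}$ realise a full copy of $\SL_2(\Z/p)$ inside $\Sp_{2g}(\Z/p)$ -- the symplectic group of the plane $\langle[\gamma],[\delta]\rangle$ (symplectic because $i_a([\gamma],[\delta])=\pm1$), acting as the identity on a complement. Hence, \emph{provided} the image of $\HH_2(\Lambda_\delta;\Q)$ in $\HH_2(\Mod_g(p);\Q)$ equals $V_\gamma$, this copy of $\SL_2(\Z/p)$ lies in the image of $Z$.

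The hard part -- the step I expect to be the main obstacle -- is exactly this proviso: that $\HH_2(\Lambda_\delta;\Q)\to\HH_2((\Mod_g(p))_\gamma;\Q)$ is surjective once $g$ is large (since $\Lambda_\delta<(\Mod_g(p))_\gamma$, this is equivalent to the image of $\HH_2(\Lambda_\delta;\Q)$ in $\HH_2(\Mod_g(p);\Q)$ being $V_\gamma$). This is a homological stability statement for level $p$ curve stabilisers, and it cannot be read off directly from the stability machine, because genuine homological stability for $\Mod_g(p)$ fails already in degree $1$ (Theorem \ref{theorem:modpabel}). Instead one must run a relative version: $(\Mod_g(p))_\gamma$ acts on the complex of isotopy classes of simple closed curves lying in $\Sigma_g\setminus\gamma$ with connected complement there, which is highly connected by Harer's work, and one peels curves off one at a time to pass from $(\Mod_g(p))_\gamma$ down to $\Lambda_\delta$ through intermediate subsurface stabilisers; the failure of surjectivity at each stage occurs in low degree and is governed by an analogue of Lemma \ref{lemma:weakstability} for the intermediate (lower genus, more boundary components) surfaces. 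Carrying this induction through -- and keeping careful track of how the relevant images in $\HH_2(\Mod_g(p);\Q)$ compare along the way -- is where the real work lies.

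Granting the proviso, the lemma follows quickly. For every $\delta$ with $i_g(\gamma,\delta)=1$, the image of $Z$ in $\Sp_{2g}(\Z/p)$ contains the copy of $\SL_2(\Z/p)$ supported on $\langle[\gamma],[\delta]\rangle$, hence the transvection about every vector $a[\gamma]+b[\delta]$. Letting $\delta$ range over all such curves (which, by the change of coordinates principle, lets $[\delta]$ run over all primitive classes with $i_a([\gamma],[\delta])=\pm1$), these vectors cover every $v$ with $i_a([\gamma],v)\neq0$ together with every multiple of $[\gamma]$; conjugating the transvections thus obtained among themselves then produces the transvection about an arbitrary $v\in(\Z/p)^{2g}$. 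Since $\Sp_{2g}(\Z/p)$ is generated by its symplectic transvections (for instance because $\Sp_{2g}(\Z)\twoheadrightarrow\Sp_{2g}(\Z/p)$ and $\Sp_{2g}(\Z)$ is generated by elementary symplectic matrices), the image of $Z$ is all of $\Sp_{2g}(\Z/p)$, so $Z=\Mod_g$ and we are done.
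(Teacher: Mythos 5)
Your outer frame is reasonable, and it is in the same spirit as the model argument the paper gives for $\SL_n(\Z)$ (Theorem \ref{theorem:charney}): to show a mapping class acts trivially, exhibit a subgroup whose image in $\HH_2(\Mod_g(p);\Q)$ is the chunk in question and which commutes elementwise with mapping classes supported on a disjoint one-holed torus $N$, then use the fact that such elements hit a full $\SL_2(\Z/p)$ in $\Sp_{2g}(\Z/p)$ and that these generate. But the paper does not prove Lemma \ref{lemma:trivialaction} in these notes at all; it explicitly defers the proof to \cite{PutmanSecondHomologyLevel} and only proves the $\SL_n$ analogue. The step you label the ``proviso'' --- that the image of $\HH_2(\Lambda_\delta;\Q)$ in $\HH_2(\Mod_g(p);\Q)$ equals $V_\gamma$, equivalently that $\HH_2(\Lambda_\delta;\Q)\rightarrow\HH_2((\Mod_g(p))_\gamma;\Q)$ is surjective up to the relevant image --- is exactly the hard content of that deferred proof, and your proposal does not supply it. As written, this is a genuine gap, not a proof.

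Moreover, the route you sketch for the proviso is unlikely to work as stated. Running the stability machine on the curve complex of $\Sigma_g\setminus\gamma$ only yields the analogue of Lemma \ref{lemma:weakstability}: the \emph{sum} over all vertex stabilizers surjects onto $\HH_2$. To ``peel off curves one at a time'' and descend to the single subgroup $\Lambda_\delta$, you must know that the images coming from different curves (or different subsurface stabilizers) coincide, and that is an instance of the very statement Lemma \ref{lemma:trivialaction} encapsulates, now at lower complexity --- so the induction as sketched is circular unless you inject an independent input. In the actual proof the role of your proviso is played by level-$p$ analogues of the Birman exact sequence, comparing the curve stabilizer with an enlarged group containing the commuting mapping classes; in Charney's $\SL_n$ model the corresponding kernels $K\subset\hat K$ are free abelian, so the comparison is immediate over $\Q$, but for $\Mod_g(p)$ the kernels are surface-group/unit-tangent-bundle type groups whose isomorphism type changes when one passes to level $p$, and the comparison requires intricate twisted group cohomology computations. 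None of that work appears in your proposal, and it cannot be replaced by the generic machine. (The final generation step --- that the $\SL_2(\Z/p)$'s on the planes $\langle[\gamma],[\delta]\rangle$ generate $\Sp_{2g}(\Z/p)$ --- is fine and standard.)
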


The proof of Lemma \ref{lemma:trivialaction} is quite complicated (see \cite{PutmanSecondHomologyLevel}).
Rather than give it here, we will prove an analogous result about $\Gamma_n = \SL_n(\Z)$.  Recall
that
$$\Gamma_n(p) = \Ker(\SL_n(\Z) \longrightarrow \SL_n(\Z/p)).$$
There is a natural map $\Gamma_{n-1} \rightarrow \Gamma_{n}$ which takes a matrix $A \in \Gamma_{n-1}$
to the matrix $\MatTwoTwo{1}{0}{0}{A} \in \Gamma_{n}$.  In \cite{CharneyCongruence},
Charney proves the following theorem.

\begin{theorem}[{Charney, \cite[Proposition 5.5]{CharneyCongruence}}]
\label{theorem:charney}
For $n \geq 2$ and $k,p \geq 1$, the group $\Gamma_n$ acts trivially on the image
of $\HH_k(\Gamma_{n-1}(p);\Q)$ in $\HH_k(\Gamma_n(p);\Q)$.
\end{theorem}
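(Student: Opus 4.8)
The plan is to reduce the statement to a fact about elementary matrices and then settle that fact with a short spectral‑sequence argument. Fix $k$, let $W \subseteq \HH_k(\Gamma_n(p);\Q)$ be the image of $\HH_k(\Gamma_{n-1}(p);\Q)$, and set
$$\Gamma_n^0 = \{g \in \Gamma_n : g \text{ acts as the identity on } W\}.$$
Because the conjugation action of $\Gamma_n$ on $\HH_k(\Gamma_n(p);\Q)$ is a genuine group action, $\Gamma_n^0$ is a subgroup of $\Gamma_n$, so it is enough to show that $\Gamma_n^0$ contains a generating set of $\Gamma_n = \SL_n(\Z)$. I would use the elementary matrices $e_{ij}$: by the Steinberg relation $e_{ij} = [e_{i1},e_{1j}]$ (valid when $i,j \neq 1$ and $i \neq j$), it suffices to show that $e_{1j} \in \Gamma_n^0$ for $j \geq 2$ and $e_{i1} \in \Gamma_n^0$ for $i \geq 2$; these are exactly the elementary matrices fixing, respectively, the vector $e_1$ or the hyperplane $\langle e_2,\dots,e_n\rangle$ pointwise. (For $n=2$ the group $\Gamma_1(p)$ is trivial, so $W = 0$ and there is nothing to prove.)

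The heart of the argument would be the following key lemma: if $g \in \Gamma_n$ normalizes a subgroup $P$ of $\Gamma_n(p)$ such that (i) $P = U \rtimes H$ for some finitely generated free abelian normal subgroup $U$, (ii) $g$ centralizes $U$, (iii) $c_g$ induces the identity on $P/U$, and (iv) some positive power of $g$ lies in $U$, then $c_g$ acts as the identity on $\HH_k(P;\Q)$ for every $k$. Granting this, I would apply it with $P = \mathrm{Stab}_{\Gamma_n(p)}(e_1)$ when $g = e_{1j}$, and with $P = \mathrm{Stab}_{\Gamma_n(p)}(\langle e_2,\dots,e_n\rangle)$ when $g = e_{i1}$. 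In either case a direct matrix computation exhibits $P$ as the semidirect product of the standard copy of $\Gamma_{n-1}(p)$ with the appropriate abelian group of transvections and verifies (i)--(iv), with $g^p$ serving for (iv). Since the inclusion $\Gamma_{n-1}(p) \hookrightarrow \Gamma_n(p)$ factors through $P$ and the inclusion $P \hookrightarrow \Gamma_n(p)$ is equivariant for conjugation by $g$, the key lemma yields $g_\ast(w) = w$ for every $w$ in the image of $\HH_k(P;\Q) \to \HH_k(\Gamma_n(p);\Q)$, in particular for every $w \in W$. Hence $e_{1j}, e_{i1} \in \Gamma_n^0$, so $\Gamma_n^0 = \Gamma_n$, as desired.

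To prove the key lemma, I would run the Hochschild--Serre spectral sequence $E^2_{a,b} = \HH_a(H;\HH_b(U;\Q)) \Rightarrow \HH_{a+b}(P;\Q)$. Conditions (ii) and (iii) say precisely that $(\mathrm{id}_U, c_g, \mathrm{id}_H)$ is an automorphism of the extension $1 \to U \to P \to H \to 1$, so by functoriality of the spectral sequence $c_g$ induces the identity on $E^2$, hence on $E^\infty$. Thus $(c_g)_\ast$ preserves the filtration of $\HH_k(P;\Q)$ coming from the spectral sequence and is the identity on the associated graded, i.e., $(c_g)_\ast$ is unipotent. On the other hand, (iv) gives some $m \geq 1$ with $g^m \in U \subseteq P$, so $(c_g)^m = c_{g^m}$ is an inner automorphism of $P$ and hence acts trivially on $\HH_k(P;\Q)$; in particular $(c_g)_\ast$ has finite order. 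A finite‑order unipotent automorphism of a $\Q$‑vector space is the identity, so $(c_g)_\ast = \mathrm{id}$.

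The main obstacle, and the only step requiring a genuine idea, is the choice of the intermediate subgroup $P$: it must be large enough to contain $\Gamma_{n-1}(p)$ yet structured enough — a semidirect product by a group of transvections that $g$ centralizes, with $g$ itself "almost" lying inside that group — for conjugation by $g$ to be unipotent, and therefore trivial, on $\HH_\ast(P;\Q)$. Once the right parabolic‑type subgroups are in hand everything else is formal; I would also note that the argument never uses that $p$ is prime or odd, consistent with the hypothesis $p \geq 1$.
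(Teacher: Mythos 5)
Your proof is correct, and its skeleton coincides with the paper's: the same reduction to $e_{1j}$ and $e_{i1}$ via $[e_{i1},e_{1j}]=e_{ij}$, the same intermediate subgroup (your $P=\mathrm{Stab}_{\Gamma_n(p)}(e_1)$ is exactly the group $G$ in the text), and the same extension $1\to K\to G\to \Gamma_{n-1}(p)\to 1$ fed into Hochschild--Serre. The difference is the concluding mechanism. The paper enlarges $G$ to the group $\hat G$ in which the top-row entries are arbitrary integers; since $e_{1j}\in\hat G$, its action on $\HH_k(\hat G;\Q)$ is inner, and the work is to show that $G\hookrightarrow\hat G$ induces an isomorphism on rational homology, which is done by comparing the two extensions with kernels $K=p\hat K\subset \hat K\cong\Z^{n-1}$ and applying the $5$-lemma for group homology (Lemma \ref{lemma:grouphomology5}). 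You never leave $G$: conjugation by $e_{1j}$ is the identity on $K$ and on $G/K$, hence unipotent on $\HH_k(G;\Q)$ by functoriality of the spectral sequence, while $e_{1j}^p\in K$ makes it of finite order, and a finite-order unipotent operator on a $\Q$-vector space is the identity. Each route confines the essential use of rational coefficients to a single step --- for the paper, the isomorphism $\HH_\ast(p\hat K;\Q)\cong\HH_\ast(\hat K;\Q)$; for you, ``finite-order unipotent implies identity'' --- and both work for every $p$. Two small remarks on your key lemma: hypothesis (i) is stronger than needed (only normality of $U$ enters), and since $c_g$ does not preserve the complement $H$ inside $P$, the extension-automorphism statement should be phrased with the quotient $P/U$ rather than with $H$, which is exactly how you in fact use it. Your packaging as a reusable lemma about a unipotent, finite-order action is slightly more portable; the paper's auxiliary group $\hat G$ makes the triviality of the action completely transparent (it is inner there) at the cost of proving the comparison isomorphism.
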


\noindent
Though she does not say it in her paper, it is not hard to combine this with the
standard homological stability machinery to give a new proof of the Borel stability
theorem for $\SL_n(\Z)$.  After the proof, we will comment on the relation between
the proofs of Theorem \ref{theorem:charney} and Lemma \ref{lemma:trivialaction}.

\begin{proof}[{Proof of Theorem \ref{theorem:charney}}]
For $1 \leq i,j \leq n$ such that
$i \neq j$, let $e_{ij}$ be the {\em elementary matrix} which is obtained from the $n \times n$
identity matrix by changing the entry at position $(i,j)$ to $1$.
The group $\Gamma_n$ is generated by the set $\Set{$e_{ij}$}{$1 \leq i,j \leq n$, $i \neq j$}$.
However, we do not need this entire set.

\begin{exercise}
For $2 \leq i,j \leq n$ such that $i \neq j$, prove that $[e_{i1},e_{1j}] = e_{ij}$.
\end{exercise}

\noindent
Consequently, we only need to prove that $e_{1j}$ and $e_{i1}$ act trivially
for $2 \leq i,j \leq n$.  We will give the details for $e_{1j}$.  The other case is similar.

Consider the subgroups
$$G = \Set{$\left(
\begin{array}{c|c}
1      & c_2 \ \cdots\  c_n \\ \hline
0      & \multirow{3}{*}{\raisebox{-5mm}{\scalebox{2}{$A$}}} \\
\raisebox{2mm}{\vdots} & \\
0      & \\
\end{array}\right)$}
{$A \in \Gamma_{n-1}(p)$, $c_2,\dots,c_n \in p\Z$}$$
and
$$\hat{G} = \Set{$\left(
\begin{array}{c|c}
1      & c_2 \ \cdots\  c_n \\ \hline
0      & \multirow{3}{*}{\raisebox{-5mm}{\scalebox{2}{$A$}}} \\
\raisebox{2mm}{\vdots} & \\
0      & \\
\end{array}\right)$}
{$A \in \Gamma_{n-1}(p)$, $c_2,\dots,c_n \in \Z$}$$
of $\Gamma_n$.  The conjugation action of $e_{1j}$ on $\Gamma_n$ takes $G$ and $\hat{G}$ to
themselves.  We have $\Gamma_{n-1}(p) \subset G \subset \Gamma_n(p)$, so it
is enough to show that $e_{1j}$ acts trivially on $\HH_k(G;\Q)$.  Since
$e_{1j} \in \hat{G}$, the action of $e_{1j}$ on $\HH_k(\hat{G};\Q)$ is trivial.
It is therefore enough to show that the inclusion $G \hookrightarrow \hat{G}$ induces
an isomorphism $\HH_k(G;\Q) \cong \HH_k(\hat{G};\Q)$.

Define
$$K = \Set{$\left(
\begin{array}{c|c}
1      & c_2 \ \cdots\  c_n \\ \hline
0      & \multirow{3}{*}{\raisebox{-5mm}{\scalebox{2}{$1$}}} \\
\raisebox{2mm}{\vdots} & \\
0      & \\
\end{array}\right)$}
{$c_2,\dots,c_n \in p\Z$} \subset G.$$
Observe that $K \cong \Z^{n-1}$.  We have a short exact sequence
$$\begin{CD}
1 @>>> K @>>> G @>>> \Gamma_{n-1}(p) @>>> 1.
\end{CD}$$
Similarly, setting
$$\hat{K} = \Set{$\left(
\begin{array}{c|c}
1      & c_2 \ \cdots\  c_n \\ \hline
0      & \multirow{3}{*}{\raisebox{-5mm}{\scalebox{2}{$1$}}} \\
\raisebox{2mm}{\vdots} & \\
0      & \\
\end{array}\right)$}
{$c_2,\dots,c_n \in \Z$} \subset \hat{G}$$
we have a short exact sequence
$$\begin{CD}
1 @>>> \hat{K} @>>> \hat{G} @>>> \Gamma_{n-1}(p) @>>> 1.
\end{CD}$$
These short exact sequences fit into a commutative diagram of the form
$$\begin{CD}
1 @>>> K    @>>> G @>>> \Gamma_{n-1}(p) @>>> 1 \\
@.     @VVV      @VVV   @VV{=}V          @.\\
1 @>>> \hat{K} @>>> \hat{G} @>>> \Gamma_{n-1}(p) @>>> 1.
\end{CD}$$
By Lemma \ref{lemma:grouphomology5} (a sort of $5$-lemma for group homology), it 
is enough to show that the inclusion map $K \hookrightarrow \hat{K}$ induces
an isomorphism $\HH_k(K;\Q) \cong \HH_k(\hat{K};\Q)$.  However, we have
$\hat{K} \cong \Z^{n-1}$ and $K = p \cdot \hat{K}$, so this is immediate.  We
remark that this final step is the only place we use the fact that we are
working over $\Q$.
\end{proof}

\begin{lemma}[{$5$-lemma for group homology}]
\label{lemma:grouphomology5}
Fix a commutative ring $R$.  Consider a commutative diagram
$$\begin{CD}
1 @>>> A_1    @>>> B_1 @>>> C @>>> 1 \\
@.     @VVV      @VVV   @VV{=}V          @.\\
1 @>>> A_2 @>>> B_2 @>>> C @>>> 1
\end{CD}$$
of short exact sequences of groups.  Assume that
the map $A_1 \rightarrow A_2$ induces an isomorphism
$\HH_k(A_1;R) \cong \HH_k(A_2;R)$ for all $k$.
Then the map $B_1 \rightarrow B_2$ induces an
isomorphism $\HH_k(B_1;R) \cong \HH_k(B_2;R)$ for all $k$.
\end{lemma}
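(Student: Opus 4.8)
The plan is to compare the Lyndon--Hochschild--Serre spectral sequences of the two extensions. For $i = 1,2$, the extension $1 \to A_i \to B_i \to C \to 1$ yields a first-quadrant homological spectral sequence
$$E^2_{pq}(i) = \HH_p(C;\HH_q(A_i;R)) \Longrightarrow \HH_{p+q}(B_i;R),$$
in which $C$ acts on $\HH_q(A_i;R)$ through the conjugation action of $B_i$ on its normal subgroup $A_i$ (this action factors through $C$ because inner automorphisms act trivially on homology). The commutative diagram of extensions induces, by naturality of this spectral sequence, a morphism $f : E^\bullet(1) \to E^\bullet(2)$ whose effect on abutments is the map $\HH_\ast(B_1;R) \to \HH_\ast(B_2;R)$ induced by $B_1 \to B_2$. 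By the comparison theorem for first-quadrant spectral sequences, it is enough to show that $f$ is an isomorphism on $E^2$-pages.

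First I would verify that the map $\HH_q(A_1;R) \to \HH_q(A_2;R)$ induced by $A_1 \to A_2$ is $C$-equivariant. Given $c \in C$, choose a lift $b_1 \in B_1$; its image $b_2 \in B_2$ is again a lift of $c$, since the right-hand square commutes and the map $C \to C$ is the identity. Because the left-hand square commutes, $A_1 \to A_2$ intertwines conjugation by $b_1$ with conjugation by $b_2$, and hence intertwines the corresponding actions on homology; the resulting $c$-action is independent of the choice of lift because elements of $A_i$ act trivially. Thus $\HH_q(A_1;R) \to \HH_q(A_2;R)$ is an isomorphism of $C$-modules (it is an isomorphism of abelian groups by hypothesis), so applying $\HH_p(C;-)$ shows that $E^2_{pq}(1) \to E^2_{pq}(2)$ is an isomorphism for all $p, q \ge 0$.

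It then remains to run the formal comparison. Since $f$ is an isomorphism on $E^2$ and commutes with the differentials, induction on $r$ gives that $f$ is an isomorphism on $E^r$ for all $r \ge 2$, hence on $E^\infty$. Both spectral sequences live in the first quadrant, so for each $n$ the filtration $0 = F_{-1}\HH_n \subseteq \cdots \subseteq F_n\HH_n = \HH_n(B_i;R)$ is finite with $F_p\HH_n / F_{p-1}\HH_n \cong E^\infty_{p,n-p}(i)$; since $f$ preserves the filtration and induces isomorphisms on the associated graded, a short induction on $p$ using the five lemma for $R$-modules shows $f$ is an isomorphism $\HH_n(B_1;R) \to \HH_n(B_2;R)$ for all $n$.

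The one point requiring care is the equivariance check in the middle step: one must confirm that the $C$-module structures entering the two spectral sequences are exactly the conjugation structures and that the comparison map respects them. Everything after that is the standard spectral sequence comparison machinery. An alternative that sidesteps spectral sequences would be to build compatible free resolutions of $B_1$ and $B_2$ relative to the normal subgroups and dimension-shift by hand, but this merely reproves the comparison theorem and offers no simplification.
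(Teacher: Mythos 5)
Your argument is correct and is essentially the paper's own proof: both compare the Hochschild--Serre spectral sequences of the two extensions, note that the hypothesis gives an isomorphism on $E^2$-pages, and conclude via convergence. Your added verification that the map $\HH_q(A_1;R)\rightarrow\HH_q(A_2;R)$ is $C$-equivariant (so that the $E^2$-comparison is legitimate) is a detail the paper leaves implicit, and it is handled correctly.
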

\begin{proof}
Associated each of our short exact sequences is a Hochschild-Serre spectral
sequence in group homology.  The assumptions in the lemma imply that the
induced map between these spectral sequences is an isomorphism on the $E^2$-page.  This
implies that it converges an an isomorphism on the $E^{\infty}$-pages, so we
obtain isomorphisms $\HH_k(B_1;R) \cong \HH_k(B_2;R)$ for all $k$.
\end{proof}

\ParagraphHeading{Relation between Theorem \ref{theorem:charney} and Lemma \ref{lemma:trivialaction}.}
The proof of Lemma \ref{lemma:trivialaction} in \cite{PutmanSecondHomologyLevel} is
not nearly as short as the proof above of Theorem \ref{theorem:charney}.  However, they
share some features.  The key to the proof of Theorem \ref{theorem:charney} above
is the short exact sequence
$$1 \longrightarrow K \longrightarrow G \longrightarrow \Gamma_{n-1}(p) \longrightarrow 1.$$
A similar role in the proof of Lemma \ref{lemma:trivialaction} is played by
various analogues for $\Mod_g(p)$ of the Birman exact sequence which was discussed in Lecture 1.  However,
the kernels of these Birman exact sequences are not free abelian, so passing to finite-index
subgroups actually changes their isomorphism types.  This necessitates making a number of rather intricate
twisted group cohomology computations.

\ParagraphHeading{Recap.}
The above proof that $\HH_2(\Mod_g(p);\Q) \cong \Q$ was a little involved, so let us recap the main
steps.
\begin{enumerate}
\item We first showed that $\HH_2(\Mod_g(p);\Q) \cong \Q$ if and only if $\HH_2(\Mod_g(p);\Q)$
satisfies a weak form of homological stability (see Lemma \ref{lemma:stabilityimpliesiso}).
\item We then attempted to use the standard homological stability machinery to 
prove that $\HH_2(\Mod_g(p);\Q)$ stabilizes.  This failed, but it showed that
$\HH_2(\Mod_g(p);\Q)$ is ``concentrated'' on stabilizers of simple closed curves
(see Lemma \ref{lemma:weakstability}).
\item The proof is reduced to showing that all stabilizers of simple closed
curves give the same ``chunk'' of $\HH_2(\Mod_g(p);\Q)$.
\end{enumerate}
It seems reasonable to conjecture that $\HH_k(\Mod_g(p);\Q) \cong \HH_k(\Mod_g;\Q)$ in a stable range
for all $k$.  The first two steps above can
be easily generalized to $\HH_k(\Mod_g(p);\Q)$ for $k \geq 3$.  The real
difficulty is the third step.  It would be very interesting to extend it
to the higher homology groups.

\noindent
{\raggedright
Andrew Putman\\
Department of Mathematics\\
Rice University, MS 136 \\
6100 Main St.\\
Houston, TX 77005\\
E-mail: {\tt andyp@rice.edu}}

\end{document}